\documentclass[11pt,letterpaper]{article}
\usepackage[latin1]{inputenc}
\usepackage{amsthm,amsmath,color}
\usepackage[numbers]{natbib}
\usepackage{amsfonts,enumerate,fullpage}
\usepackage{amssymb}
\usepackage{graphicx}
\usepackage{url}
\usepackage[ruled,vlined,linesnumbered]{algorithm2e}

\newtheorem{theorem}{Theorem}[section]
\newtheorem{proposition}[theorem]{Proposition}

\newtheorem{lemma}[theorem]{Lemma}
\newtheorem{corollary}[theorem]{Corollary}

\newtheorem{conjecture}{Conjecture}

\def\ZZ{\mathbb{Z}}

\def\bfd{\boldsymbol{d}}
\def\bft{\boldsymbol{t}}
\def\bfB{\boldsymbol{B}}
\def\GG{\mathbb{G}}
\def\HH{\mathbb{H}}

\title{On the degree sequences of dual graphs on surfaces}

\author{Endre Boros\thanks {MSIS \& RUTCOR, Business School, Rutgers University;
endre.boros@rutgers.edu}
\and
Vladimir Gurvich\thanks {National Research University Higher School of Economics (HSE), Moscow, Russia;
vgurvich@hse.ru, vladimir.gurvich@gmail.com}
\and
Martin Milani\v{c}\thanks{FAMNIT and IAM, University of Primorska, Koper, Slovenia; martin.milanic@upr.si}
\and
Jernej Vi\v{c}i\v{c}\thanks{FAMNIT and IAM, University of Primorska, Koper, Slovenia and
Research Centre of the Slovenian Academy of Sciences and Arts, The Fran Ramov\v{s} Institute;
jernej.vicic@upr.si}}

\begin{document}
\maketitle

\begin{sloppypar}
\begin{abstract}
Given two graphs $G$ and $G^*$ with a one-to-one correspondence between their edges, when do $G$ and $G^*$ form a  pair of dual graphs realizing the vertices and countries of a map embedded in a surface? A criterion was obtained by Jack Edmonds in 1965. Furthermore, let $\bfd=(d_1,\ldots,d_n)$ and $\bft=(t_1,\ldots,t_m)$  be their degree sequences. Then, clearly, $\sum_{i=1}^n d_i = \sum_{j=1}^m t_j = 2\ell$, where $\ell$ is the number of edges in each of the two graphs, and $\chi = n - \ell + m$ is the Euler characteristic of the surface. Which sequences $\bfd$ and $\bft$  satisfying these conditions still cannot be realized as the degree sequences? We make use of Edmonds' criterion to obtain several infinite series of exceptions for the sphere, $\chi = 2$, and projective plane, $\chi = 1$.
We conjecture that there exist no exceptions for $\chi \leq 0$.

\bigskip
{\bf Keywords:} embedding graphs into surfaces, degree sequence, Eulerian graphs, Edmonds' criterion

\medskip
{\bf MSC codes} (2020): 05C10, 
05C07, 
05C45, 
05C62 
\end{abstract}
\end{sloppypar}

\section{Introduction}

We consider embeddings (drawings) of a graph $G$ in a surface $S$ where vertices are mapped to distinct points on the surface and edges are drawn as curves connecting the images of their endpoints so that different edges do not meet, except in their common endpoints (see, e.g.,~\cite{MR1844449}). Removing the image of the embedding from the surface results in one or more connected components, which are called {\it countries}. Furthermore, we assume that each country is homeomorphic to an open disk and call such embeddings {\it maps}. It follows from this assumption that graph $G$ must be connected. Introduce a graph $G^*$ dual to $G$  realizing the neighbor relations among countries. The graphs $G$ and $G^*$ have the same set of edges. More precisely, there is a natural one-to-one correspondence between their edge sets. An arbitrary pair of graphs with common set of edges is called a {\it plan}. Every map induces a plan. A plan is called {\it geographic} if it is induced by a map.

In 1965, Edmonds obtained necessary and sufficient conditions for a plan to be geographic~\cite{Edmonds}. This result was rediscovered, with essentially the same proof, in 1989 by~\citet{GurvichShabat}, see also~\cite{Gurvich-RRR} for more details. The proof is constructive and allows to construct all maps inducing the given geographic plan. Edmonds' result and techniques have found
several applications to graphs on surfaces~\cite{MR2482947,MR3184644,MR898434,MR1179495,MR979055}.
The result was also generalized to partial duality~\cite{MR2507944,MR3071855,MR2817160}.

Given a geographic plan, let $\ell$ be the number of edges in each of the two graphs.
Furthermore, let $\bfd=(d_1,\ldots,d_n)$ and $\bft=(t_1,\ldots,t_m)$  be their degree sequences. Then, clearly,
\begin{equation}\label{e:degree-sum}
\sum_{i=1}^n d_i = \sum_{j=1}^m t_j = 2\ell\,.
\end{equation}
The Euler characteristic of the surface in which the two graphs are embedded is given by the formula $\chi = n - \ell + m$ (see Section~\ref{sec:preliminaries} for details).

The degree sequences of connected graphs were characterized in~\cite{MR0148049}. However, characterizing degree sequences of geographic plans seems to be a much more difficult problem. This problem is the main motivation for our study. In particular, we focus on the following question. Which pairs of sequences $\bfd$ and $\bft$ of non-negative integers satisfying condition~\eqref{e:degree-sum} still cannot be realized as the degree sequences of a geographic plan? We make use of Edmonds' criterion to obtain several infinite series of exceptions for the sphere, $\chi = 2$, and for the projective plane, $\chi = 1$. We conjecture that there exist no exceptions for $\chi \leq 0$.

\section{Preliminaries}\label{sec:preliminaries}

\noindent{\bf Graphs.} We only consider finite undirected graphs, with loops and
multiple edges allowed. We need the following standard concepts.

A cyclic sequence of alternating vertices and edges in which any
consecutive edge and vertex are incident is called a {\it closed walk}.
A closed walk in which all the edges are different is called a
{\it closed trail}. A closed trail in which all vertices are different
is called a {\it cycle}. The {\it degree} of a vertex is defined as the number of edges incident to it, where loops are considered with multiplicity $2$. The degree of vertex  $v$  in a graph  $G$ is denoted by $d_G(v)$. A closed trail containing all the edges of a graph is called an {\it Eulerian trail} and a graph having an Eulerian trail is called {\it Eulerian}. Clearly, an Eulerian graph without isolated vertices (of degree $0$) must be connected. Criteria for a connected graph to be Eulerian are given by the following theorem (see, for instance,~\cite[Theorem 7.1]{Harary}).

\begin{theorem}[Euler's ``K\"onigsberg Bridge'' theorem~\cite{euler1736solutio}]\label{thm:Eulerian}
For a connected graph $G$ the following three properties are equivalent:
\begin{enumerate}[a)]
  \item $G$ is Eulerian.
  \item There exist a collection of cycles in $G$ such that their
edge sets represent a partition of the edge set of the graph.
  \item All vertices have even degrees.
\end{enumerate}
\end{theorem}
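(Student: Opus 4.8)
The plan is to prove the three properties equivalent by establishing the cyclic chain of implications (a) $\Rightarrow$ (c) $\Rightarrow$ (b) $\Rightarrow$ (a), so that each statement forces the next and the cycle closes. Only the last implication will use connectedness, which matches the hypothesis of the theorem.

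First, for (a) $\Rightarrow$ (c), I would fix an Eulerian trail and observe that, since it is closed and traverses every edge exactly once, it pairs up the edge-ends incident to each vertex $v$: each passage of the trail through $v$ matches the edge-end by which it arrives with the edge-end by which it departs. As the trail uses every edge, every edge-end at $v$ gets matched in this way, so their number, which is exactly $d_G(v)$, is even. A loop at $v$ contributes two edge-ends at $v$ and is handled uniformly.

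Next, for (c) $\Rightarrow$ (b), I would argue by induction on the number of edges $\ell$. The base case $\ell = 0$ is vacuous, the empty collection of cycles working. If $\ell > 0$, pick a vertex of positive degree and start tracing a trail, never reusing an edge; because every vertex has even degree, whenever the trail enters a vertex there is still an unused edge on which to leave, so the trail can only terminate by returning to an already-visited vertex. The portion of the walk between the two visits to that vertex is a cycle $C$. Deleting $E(C)$ lowers the degree of each vertex by $0$ or $2$, so all degrees stay even, and the induction hypothesis applied to $G - E(C)$ yields a cycle partition of the remaining edges; adjoining $C$ completes a cycle partition of $E(G)$. Here loops and pairs of parallel edges simply appear as cycles of length $1$ and $2$, respectively, so the multigraph setting causes no difficulty.

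Finally, for (b) $\Rightarrow$ (a), I would assemble the cycles $C_1,\dots,C_k$ of the given partition into a single Eulerian trail, and this is where connectedness is essential and where I expect the main obstacle to lie. Starting from $C_1$ as a closed trail $T$, I would repeatedly splice in an as-yet-unused cycle: as long as some $C_i$ has not been incorporated, the set of vertices covered by $T$ and the set of vertices covered by the remaining cycles must overlap, for otherwise $G$ would split into two parts with no edge between them, contradicting connectedness; hence some unused $C_i$ meets $T$ in a common vertex $v$, and inserting $C_i$ into $T$ at $v$ (traversing $C_i$ from $v$ back to $v$) yields a longer closed trail that still uses each edge at most once. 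After $k-1$ splices all edges are used, producing the Eulerian trail. The delicate points to verify carefully are that a shared vertex always exists at each stage (a consequence of connectedness together with the edge-disjointness of the partition classes) and that the splicing genuinely produces a closed \emph{trail} rather than merely a closed walk; handling this bookkeeping rigorously, rather than the individual implications themselves, is the crux of the argument.
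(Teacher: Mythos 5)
The paper does not prove this statement at all: it is quoted as Euler's classical theorem with a pointer to the literature (Harary, Theorem 7.1), so there is no in-paper argument to compare against. Your proposed cycle of implications $a)\Rightarrow c)\Rightarrow b)\Rightarrow a)$ is the standard textbook proof and is correct, including the two points you flag as delicate: edge-disjointness of the partition classes guarantees that each splice yields a trail rather than a walk, and connectedness guarantees that an unused cycle always shares a vertex with the current trail. The only small thing worth making explicit is that in the step $c)\Rightarrow b)$ the induction hypothesis must be stated for arbitrary (not necessarily connected) graphs with all degrees even, since $G-E(C)$ may be disconnected; your remark that only the last implication uses connectedness shows you are already aware of this, so the argument is complete.
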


\medskip
\begin{sloppypar}
\noindent{\bf Surfaces.} The topological classification of surfaces
(that is, two-dimensional compact manifolds without boundary)
is well known. Each surface is either {\it orientable} (in which case it is homeomorphic to $S_p$, a sphere with $p\geq  0$ handles) or {\it non-orientable} (in which case it is homeomorphic to $C_q$, a sphere with $q\geq  1$ holes glued by M\"obius strips); see, for instance,~\citet{LandoZvonkin,MR1844449}. For example, $S_0$  is a sphere, $S_1$ is a torus, $C_1$ is a projective plane, and $C_2$ is a Klein bottle.
\end{sloppypar}

\medskip
\noindent{\bf Maps.} Let $S$ be a surface and  $G = (V,E)$ be
a graph where $V =\{v_1,\ldots, v_n\}$ is the set of vertices,
$E=\{e_1,\ldots, e_\ell\}$ is the set of edges.
Furthermore, let $\phi$ be an embedding of $G$ in $S$ such that
the edges do not have intersections on the surface apart from their
common vertices in the graph, and also they have no self-intersections
apart from the vertices of loops. Let us cut surface $S$ along the edges
of graph $G$; in other words, partition the difference $S - \phi(G)$
into connected components (countries). Recall that every country must be
homeomorphic to an open disk and therefore graph $G$ must be connected.
Denote the set of countries by $F=\{f_1,\ldots, f_m\}$.

A triple $M=(S,G,\phi)$ satisfying the conditions stated above will
be called a {\it map}. For a map $M$ as above, we write $V(M) = V$, $E(M) = E$,
and $F(M) = F$. Standardly, two maps  $M'= (S',G',\phi')$ and
$M'' = (S'', G'', \phi'')$ are called {\it isomorphic} if there
exists a homeomorphism $g: S' \to S''$ carrying $\phi'(G')$  into
$\phi''(G'')$.

\medskip
\begin{sloppypar}
\noindent{\bf The Euler characteristic.}
It is well known that for every map $M$ on a given surface $S$ the number $|V(M)|-|E(M)|+|F(M)|$ takes the same value, called the {\it Euler characteristic} of the surface and denoted by $\chi(S)$, that is,
\begin{equation}\label{e:chi}
\chi(S)=|V(M)|-|E(M)|+|F(M)|\,.
\end{equation}
For the surfaces $S_p$ and $C_q$ we have
\begin{equation}\label{e:chi-surfaces}
\chi (S_p)=2-2p,\ p=0,1,2,\ldots\,;\ \ \chi (C_q)=2-q,\ q=1,2,\ldots\,.
\end{equation}
Therefore we always have $\chi(S)\le 2$ , and  $\chi(S) = 2$ if and only if $S$ is a sphere.
For a given value of $\chi\le 2$ there are at most two surfaces
with Euler characteristic $\chi$, namely, $C_q$ with $q=2- \chi $ and  $S_p$ with $p = 1
-\chi/2$,
where the second one exists only for even $\chi$.
\end{sloppypar}

\medskip
\noindent{\bf Word representations of surfaces and maps.} The classical
combinatorial approach to surfaces will be considered here briefly;
for more details, see~\cite{LandoZvonkin,MR1844449}.
For a positive integer $n$, an {\it $n$-gon} is a disk in the plane with $n$ distinct {\it points} on its boundary, which partition the boundary into $n$ {\it sides}. Any $n$-gon is also called a {\it polygon}. Fix a number of pairwise disjoint polygons in the plane. Each one can have any number of sides (including 2 and 1). We assume that the sum of all these numbers is even and that the set of all the sides is partitioned into pairs. In each polygon, fix an arbitrary direction for each side and denote the sides directed clockwise by
$a$, $b$, $c, \ldots$ and counterclockwise by $\overline{a}$, $\overline{b}$, $\overline{c}, \ldots$
Two sides in any pair are denoted by the same letter. Thus in the set of all the polygons each letter occurs twice. We also assume that the set of all polygons is minimal with respect to this property. In other words, for any proper subset of the set of all polygons there exists a letter that only occurs once. (Otherwise, we would get more than one surface.)

\medskip
If we glue all the pairs of sides denoted by the same letter in accordance with their directions, we obtain a surface $S$. Note that along with the surface, we also obtain a map $(S,G,\phi)$. Each edge of $G$ corresponds to one of the letters and the corresponding pair of polygonal sides; the vertices of the graph correspond to the endpoints of the polygonal sides, taking into account also the side identifications.
Furthermore, every polygon becomes a country in this map. The above representation of the surface $S$ and the corresponding map will be called a {\it word representation}. We emphasize that this is a labeled representation, in the sense that the edges of the resulting graph $G$ will be labeled. See Fig.~\ref{fig:example1} for an example.

\medskip
\begin{figure}[h!]
	\begin{center}
		\includegraphics[width=\textwidth]{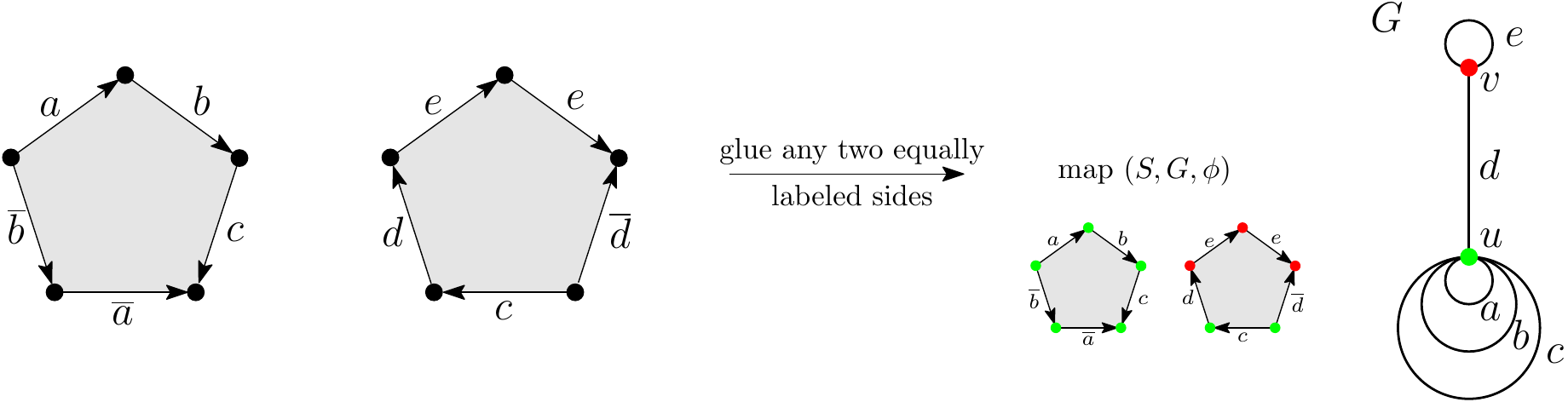}
	\end{center}
	\caption{An example of a word representation of a map. There are two $5$-gons and the corresponding two words are, for example: $(a\ b\ c\ \overline{a}\ \overline{b})$, $(c\ {d}\  e\ e\ \overline{d})$. After the identification of the sides, we obtain a map $(S,G,\phi)$ with $2$ vertices \hbox{($u$ and $v$)}, $5$ edges, and $2$ countries. The Euler characteristic of surface $S$ is $-1$, hence $S$ is the non-orientable surface $C_3$.}\label{fig:example1}
\end{figure}

\medskip
Note that in the above construction, each labeled polygon with $s$ sides gives rise to an Eulerian subtour (that is, a closed trail) of length $s$ in the multigraph $\GG$ obtained from $G$ by duplicating each edge. Furthermore, these Eulerain subtours can be chosen so that they are pairwise edge-disjoint, in which case each edge of $\GG$ appears exactly once as an edge of one of these subtours. In other words, these subtours decompose $\GG$. In the example given by Fig.~\ref{fig:example1}, the labeled polygons give rise to the following two closed trails, respectively: $(u,a',u,b',u,c',u,a'',u,b'',u)$ and $(u,d',v,e',v,e'',v,d'',u,c'',u)$, see Fig.~\ref{fig:example2}.

\begin{figure}[h!]
	\begin{center}
		\includegraphics[width=0.7\textwidth]{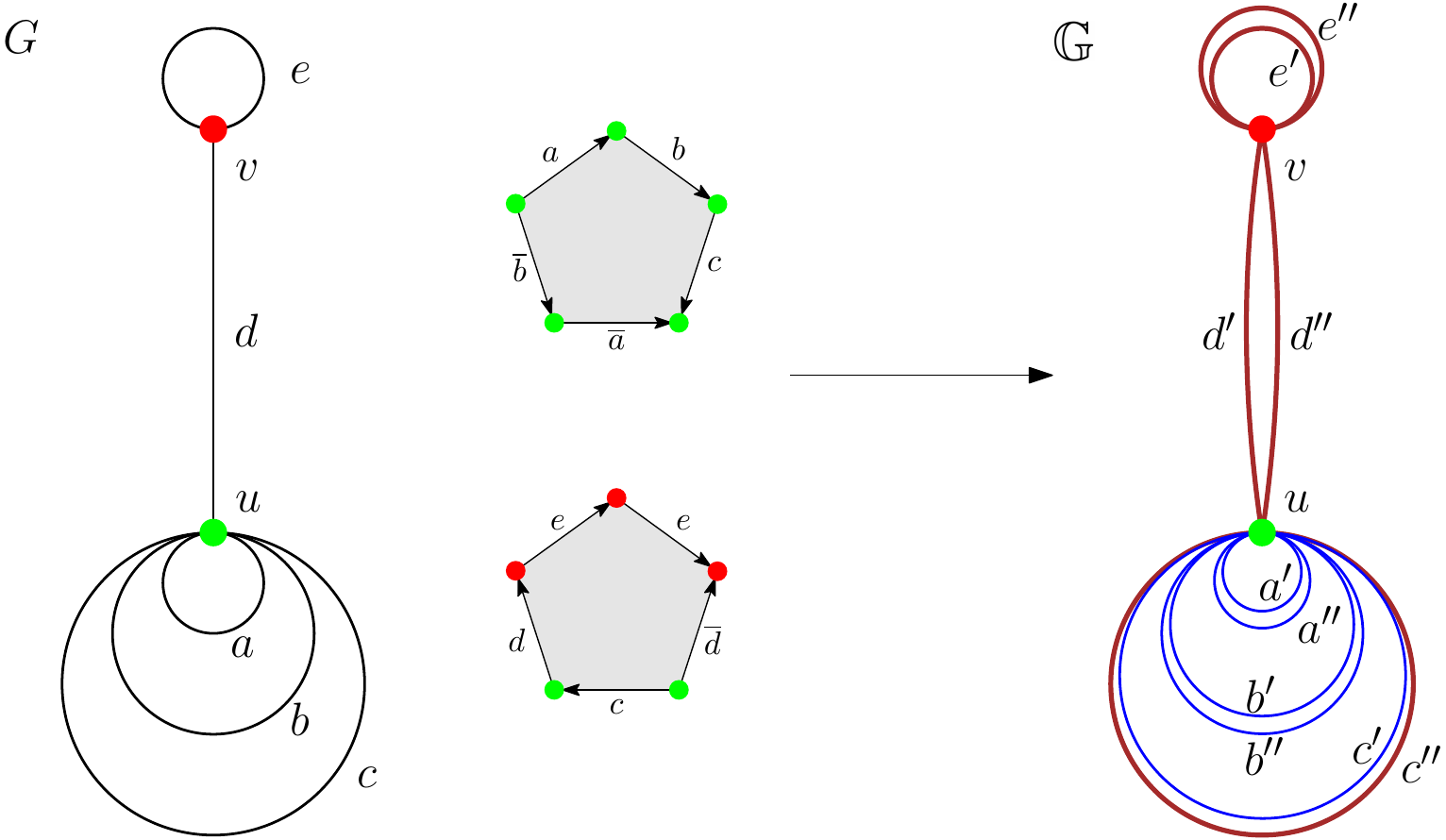}
	\end{center}
	\caption{An example of a decomposition of $\GG$ into Eulerian subtours.}\label{fig:example2}
\end{figure}

\medskip
We are interested in reversing the above procedure. Thus, let $E = \{a,b,c,\ldots\}$ be the set of $m$ edges of a graph $G$. Let us orient each of these edges arbitrarily. Let us also consider a family of Eulerian subtours decomposing $\GG$ and a family of pairwise disjoint polygons in the plane, each corresponding to one of the subtours in $\GG$. In particular, the polygons have exactly $2m$ sides in total. Each of the subtours gives rise to a labeling of the sides of the corresponding polygon.
We label each of the sides by either $e$ or $\overline{e}$ where $e$ is a directed edge from $E$ depending on the direction in which the edge is traversed in the subtour.
After this labeling procedure is finished, each edge labels exactly two sides.
The labeling procedure also defines an orientation of the sides of the polygons. Given a directed edge $e = (x,y)$, a side label of the form $e$ means that the side is oriented clockwise from $x$ to $y$, while a label of the form $\overline{e}$ means that the side is oriented counterclockwise. If we glue all the pairs of sides denoted by the same letter in accordance with their directions, we obtain a surface $S$.\footnote{Strictly speaking, this is true under the assumption that the set of polygons is minimal for the property that each of the edges labeling their sides appears exactly twice -- if this is not the case, then we get two (or more) surfaces.} Note that along with the surface, we also obtain a map $(S,G,\phi)$. Each edge of $G$ corresponds to one of the letters and the corresponding pair of polygonal sides; the vertices of the graph correspond to the endpoints of the polygonal sides, taking into account also the side identifications. Furthermore, every polygon becomes a country in this map. The above representation of the surface $S$ and the corresponding map will be called a {\it word representation}. We emphasize that this is a labeled representation, in the sense that the edges of the resulting graph $G$ are labeled. Note that this process is far from unique; see Fig.~\ref{fig:example} and~\ref{fig:example3} for examples.

\medskip
\begin{figure}[h!]
\begin{center}
\includegraphics[width=\textwidth]{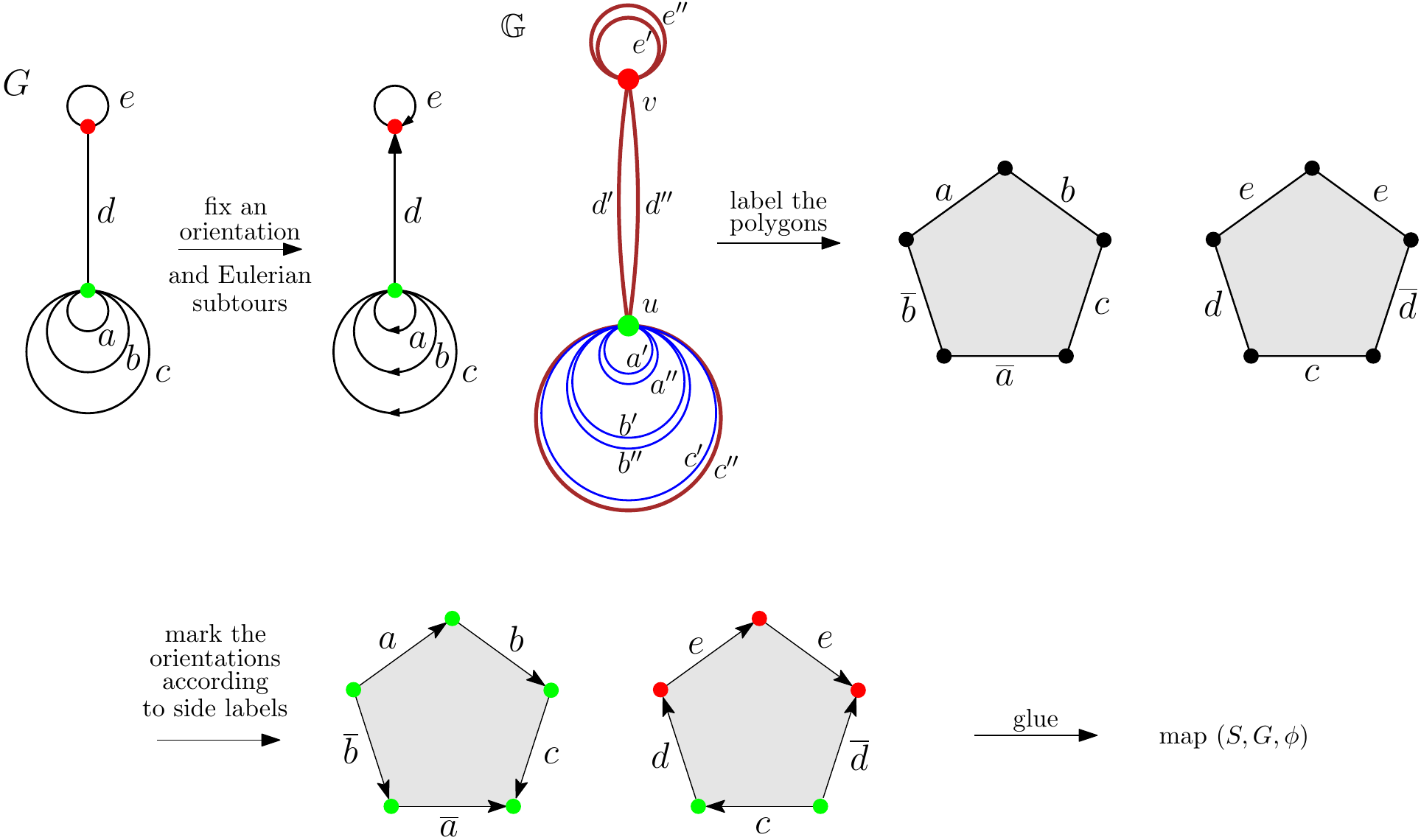}
\end{center}
\caption{An example of reversing the construction, using the Eulerian subtours $(u,a',u,b',u,c',u,a'',u,b'',u)$ and $(u,d',v,e',v,e'',v,d'',u,c'',u)$.}\label{fig:example}
\end{figure}

\medskip
\begin{figure}[h!]
	\begin{center}
		\includegraphics[width=\textwidth]{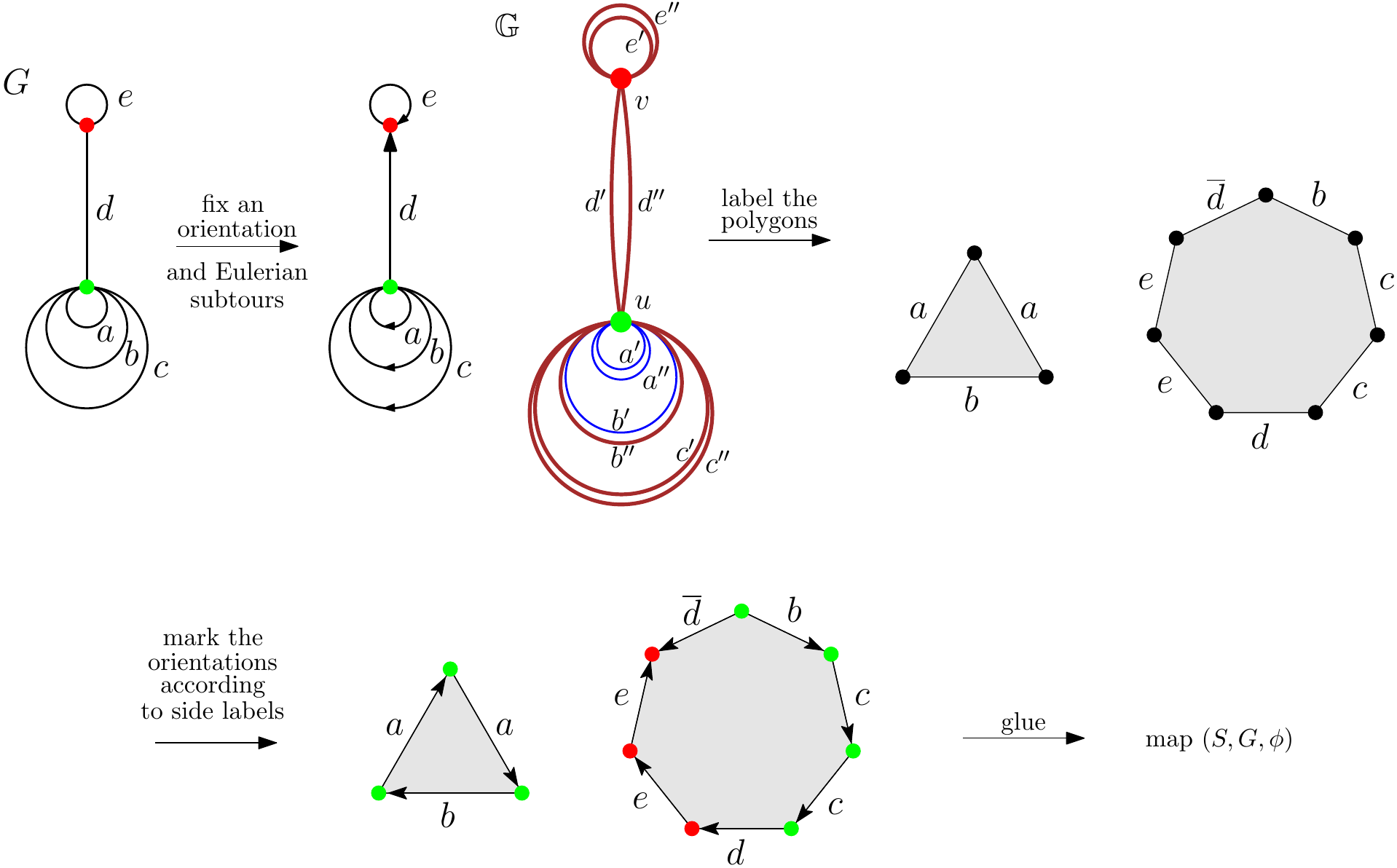}
	\end{center}
	\caption{An example of reversing the construction, using the Eulerian subtours $(u,a',u,a'',u,b'',u)$ and $(u,b'',u,c',u,c'',u,d',v,e',v,e'',v,d'',u)$. There is one $3$-gon and one $7$-gon and the corresponding two words are: $(a\ a\ b)$, $(b\ {c}\  c\ d\ e\ e\ \overline{d})$. After the identification of the sides, we obtain a map $(S,G,\phi)$ with $2$ vertices, $5$ edges, and $2$ countries. The surface $S$ is again of Euler characteristic $-1$, hence $S$ is the non-orientable surface $C_3$.}\label{fig:example3}
\end{figure}

The following transformations preserve both the surface and the map.
\begin{enumerate}[a)]
  \item A cyclic shift of letters in a polygon. For example,
\[
(a\ \overline{a}\ c\ d\ \overline{c}\ b\ e\ f\ \overline{f}\ \overline{e}) \hspace{1cm}\to \hspace{1cm}
(d\ \overline{c}\ b\ e\ f\ \overline{f}\ \overline{e}\ a\
\overline{a}\ c).
 \]
  \item Reorientation of a side:  $a \to \overline{a}$ and $\overline{a}
\to a$ (that is, replace each occurrence of some letter $a$ with $\overline{a}$, and vice versa).

  \item Reorientation of a polygon. That is a combination of two
operations: the reorientation of all the edges of the polygon according
to  b), and replacement of the cyclic order of letters in the polygon
by the inverse one. For example,
$(a\ c\ \overline{c}\ b\ d\ \overline{e})\hspace{1cm}\to \hspace{1cm} (e\ \overline{d}\ \overline{b}\ c\ \overline{c}\ \overline{a})$.
\end{enumerate}

A map and the corresponding surface are called {\it orientable} if there exist orientations of the polygons such that each letter $a$ occurs
once as $a$ and once as $\overline{a}$; see, for example,~\cite{Ringel}.
In other words, it is possible to apply few times operation c) above in such a way that any letter $a$ will occur only in combination $(a,\ \overline{a})$, but not in combination $(a,\ a)$  or $(\overline{a},\ \overline{a})$.

One can find out in~\cite{Ringel} more operations that preserve only
the surface given by a map but not the map itself. These operations
enable us to obtain the classification of surfaces mentioned above.
The surfaces $S_p$ and $C_q$ can be represented by the following {\it normal forms}.
\[
S_p=(a_1\ b_1\ \overline{a}_1\ \overline{b}_1\ a_2\ b_2\
\overline{a}_2\ \overline{b}_2\ \ldots\ a_p\ b_p\ \overline{a}_p\
\overline{b}_p),\ S_0=(a\ \overline{a});\ \
\]
\begin{equation}
C_q=(c_1\ c_1\ c_2\ c_2\ \ldots\ c_q\ c_q);\
p, q \in \{ 1,2,\ldots \}.
\end{equation}
Note that each of these maps contains only one polygon and only one vertex.
For this reason the Euler characteristics are given by formula~\eqref{e:chi-surfaces}.
Surfaces $S_p$ are orientable and $C_q$ are not.

\medskip
\begin{sloppypar}
\noindent{\bf Dual pairs of graphs.}
Consider a map $M=(S,G,\phi)$ with $G = (V,E)$. There is an obvious incidence relation between the countries and edges. The graph of this relation will be called
{\it dual to $G$ on surface $S$} and denoted by $G^*= (F,E^*)$.
Thus two countries are adjacent if and only if they have a common edge (but it is not enough
to have a common vertex). Duality is an involution, in the sense that $(G^*)^*$ is isomorphic to $G$. The dual graph is also connected and it defines a {\it dual map} on the same surface. By definition of the dual graph, there is a bijective correspondence between $E$ and $E^*$, so we will write simply $E^* = E$.
\end{sloppypar}

\medskip
\begin{sloppypar}
\noindent{\bf Geographic plans.} Let $G = (V,E)$ and $H = (F,E)$ be two graphs with a common set of edges. The pair $P=(G,H)$ is called a {\it plan}. Elements of $V$, $E$, and $F$ are called {\it vertices}, {\it edges}, and {\it faces} (or {\it countries}) of the plan. For a plan $P$ as above, we write $V(P) = V$, $E(P) = E$,
and $F(P) = F$. Every map $M$ generates in the obvious way a plan $P=P(M)$ given by
$P =(G,G^*)$ where $G = (V(M),E(M))$ and $G^* = (F(M),E(M))$ are the dual graphs
related to the map. A plan is called {\it geographic} if it is generated by a
map. By duality, a plan $(G,H)$ is geographic if and only if the plan $(H,G)$ is.
\end{sloppypar}

\medskip
\begin{sloppypar}
\noindent{\bf Loops.} In any plan $P=(G,H)$, every edge is incident to one or two
vertices and to one or two faces. An edge $e\in E(P)$ incident to a unique vertex $v\in V(P)$ is called a {\it loop incident to $v$  in $G$}. Similarly,
an edge $e\in E(P)$ incident to a unique face $f\in F(P)$ is called a {\it loop incident to $f$ in $H$}. In case of a geographic plan such an edge $e$ is called a {\it loop at vertex $v$} or an {\it interior edge of country $f$}, respectively.
\end{sloppypar}

\medskip
\begin{sloppypar}
\noindent{\bf Bimatrices of plans.} Let  $P=(G,H)$ be a plan
and let $B_G: E(P) \times V(P) \to \{0,1,2\}$ and $B_H: E(P) \times
F(P) \to \{0,1,2\}$ be the incidence matrices of the graphs $G$ and
$H$, respectively, where an entry of $2$ corresponds to a loop. These matrices have the same set of rows $E(P)$ corresponding to the edges of $P$; the sets of columns $V(P)$ and  $F(P)$ correspond to the vertices and faces of $P$, respectively.
\end{sloppypar}

The sum of the elements in each row is equal to $2$ for both matrices.

The pair $\bfB=(B_G,B_H)$ will be called the {\it bimatrix} of the plan.
Note that a plan $(G,H)$ is uniquely determined by its bimatrix
up to isomorphisms of $G$ and $H$ that keep the bijection between their edge sets.
In other words, the combinatorial structure of a plan given by a bimatrix
$\bfB=(B_G,B_H)$ is not affected by a permutation of the columns of $B_G$,
the columns of $B_H$, or by the same permutation of the
rows of both $B_G$ and $B_H$.

\bigskip
\begin{sloppypar}
\noindent{\bf Examples.}
\begin{enumerate}
  \item Let\label{first-example} $S= S_0$ be a sphere and let $G$ be a graph consisting of two vertices joined by an edge, embedded in $S$. Then $G^*$ has a single vertex and a single edge that is a loop. The bimatrix of the corresponding plan is
       $\bfB = \left( \begin{array}{c|c}11&2\end{array}\right)$.
       Vice versa, let $G$ be the graph consisting of a single vertex and a single edge that is a loop; then  $G^*$ consists of two vertices joined by an edge, and the bimatrix of the corresponding plan is $\bfB = \left( \begin{array}{c|c}2&11\end{array}\right)$.

  \item Let $S$ be a sphere and $G$ be a graph embedded in $S$ consisting of two vertices joined by two parallel edges. Then $G$ is self-dual, that is, $G^*$ is isomorphic to $G$, and the bimatrix of the corresponding plan is $\bfB = \left( \begin{array}{c|c}11&11\\
11&11 \end{array}\right)$.

  \item Bimatrix $\bfB = \left(\begin{array}{c|c}
      11&11
      \end{array}\right)$ is not associated with
any geographic plan $P$, since otherwise the Euler characteristic of the corresponding surface $S$ would be
$\chi(S) = |V(P)|-|E(P)|+|F(P)| = 2 - 1 + 2 = 3 > 2$, which is impossible.

  \item  Let again $S$ be a sphere and let $G$ be a two-edge path embedded in $S$.
Then $G^*$ consists of two loops with a common vertex, and the bimatrix of the corresponding plan is
$\bfB = \left( \begin{array}{c|c}110&2\\
101&2 \end{array}\right)$.

  \item Let again $S$ be a sphere and let $G$ consist of two vertices,
one edge joining them and one loop incident to one of them.
Then $G^*$ is isomorphic to $G$ but the loop in $G^*$ is identified with the (regular) edge in $G$ and vice versa. The bimatrix of the corresponding plan is
$\bfB = \left( \begin{array}{c|c}11&20\\
20&11 \end{array}\right)$.

  \item Bimatrix $\bfB = \left( \begin{array}{c|c}11&11\\
02&20 \end{array}\right)$ is not associated with
any geographic plan. The reason will be explained in Corollary~\ref{cor:even}
in Section~\ref{sec:Edmonds}.

   \item Let $S$ be again a sphere and let $G$ consist of two vertices,
one edge joining them and two loops incident to each. Then $G^*$
consists of two adjacent edges and one loop incident to their common
vertex. The bimatrix of the corresponding plan is
$\bfB = \left( \begin{array}{c|c}
20&110\\
11&020\\
02&011 \end{array}\right)$.

\item Let $S=C_1$ be a projective plane and let $G$ be the graph consisting of one vertex and one loop, embedded in $S$. (Note that up to isomorphism, there is only one way to embed $G$ in $S$ to get a map.) Then $G$ is self-dual, that is, $G^*$ is isomorphic to $G$, and the bimatrix of the corresponding plan is $\bfB = \left( \begin{array}{c|c}
2&2 \end{array}\right)$.
\end{enumerate}
\end{sloppypar}

\medskip
\begin{sloppypar}
\noindent{\bf The vertex--face incidence matrix.}
Given a plan $P = (G,H)$, this matrix is defined as the matrix product $B_{G,H}=B_G^T \cdot B_H$. In other words, $B_{G,H}$ is a matrix the entries of which are the scalar products of two columns corresponding to a vertex $v\in V(P)$ and a face $f\in F(P)$.
\end{sloppypar}

\medskip
\begin{sloppypar}
\noindent{\bf Walks around countries and vertices of a map.}
Let $M = (S,G,\phi)$ be a map and $f$ be one of its countries.
By definition of a map, $f$ is homeomorphic to an open disk. As we go around the boundary of $f$, we obtain a closed walk $W_{f}$ in $G$. This walk contains all edges and vertices incident to country $f$. Given a vertex $v\in V(M)$, let us denote by
$b(v,f)$ the number of occurrences of $v$ in $W_f$. While $b(v,f)$ may be an arbitrary non-negative integer, an edge $e\in E(G)$ may occur at most twice in $W_f$; it occurs exactly twice if and only if $e$ is an interior edge of country $f$.

It is well known that an arbitrary sufficiently small neighborhood
of an arbitrary point of a surface is homeomorphic to an open disk.
Let now $v$ be a vertex of $M$. As we go along the boundary of such a disk,
we obtain a closed walk $W_{v}$ in $G^*$. This walk contains all the edges and faces incident to vertex $v$. Given a face $f\in F(M)$, let us denote by
$b(f,v)$ the number of occurrences of $f$ in $W_v$. While $b(f,v)$ may be an arbitrary non-negative integer, an edge $e\in E(G^*) = E(G)$ may occur at most twice in $W_v$; it occurs exactly twice if and only if $e$ is a loop at vertex $v$.
\end{sloppypar}

\medskip
\begin{sloppypar}
\noindent{\bf Vertex and face graphs of a plan.} Let $P=(G,H)$ be a (not necessarily geographic) plan. Let us fix an arbitrary vertex $v\in V(P)$ and consider the set of all edges and faces incident to it. The subgraph of $H$ formed by them will be denoted by $H_v'$. In this subgraph let us double each edge $e$ corresponding to a loop in $G$.
The obtained graph will be called a {\it vertex graph of the plan} and denoted by $H_v$.
Similarly, for an arbitrary face $f$ of the plan $P$, let $G_f'$ be the subgraph of $G$ formed by all vertices and edges of the plan incident to $f$. The {\it face graph} $G_f$ is the graph obtained from $G_f'$ by duplicating each edge $e$ corresponding to a loop in $H$.
\end{sloppypar}

The following lemma gives a necessary condition for a plan to be geographic.

\begin{lemma}\label{lem:Eulerian-vertex-and-face-graphs}
Let $P = (G,H)$ be a geographic plan. Then, all vertex and
face graphs of $P$ are Eulerian.
\end{lemma}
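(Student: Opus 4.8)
The plan is to exhibit, for each country $f$, an explicit Eulerian trail in the face graph $G_f$, and then to obtain the statement for the vertex graphs by duality. Note that by Theorem~\ref{thm:Eulerian} it would in principle suffice to verify connectivity together with the evenness of all degrees; however, the boundary walk $W_f$ introduced above hands us the Eulerian trail directly, and it also settles connectivity for free, so this is the route I would take.

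First I would fix a country $f$ of a map $M$ inducing the geographic plan $P$ and examine the closed walk $W_f$ obtained by traversing the boundary of $f$. By the discussion preceding the lemma, $W_f$ is a closed walk in $G$ visiting precisely the vertices and edges incident to $f$, and every edge incident to $f$ occurs in $W_f$ at most twice, occurring exactly twice if and only if it is an interior edge of $f$, that is, a loop in $H$. In particular, every vertex and every edge of $G_f'$ lies on the single closed walk $W_f$, so $G_f'$---and hence $G_f$---is connected and has no isolated vertices.

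The key step is to match $W_f$ with an Eulerian trail of $G_f$. Recall that $G_f$ is obtained from $G_f'$ by duplicating exactly those edges that are loops in $H$, i.e.\ the interior edges of $f$. A non-interior edge incident to $f$ is traversed once by $W_f$ and appears once in $G_f$; an interior edge of $f$ is traversed twice by $W_f$ and is split into two parallel copies in $G_f$. Assigning the two traversals of each interior edge to its two copies turns $W_f$ into a closed walk in $G_f$ that uses every edge of $G_f$ exactly once, that is, into a closed trail containing all edges of $G_f$. Hence $W_f$ is an Eulerian trail and $G_f$ is Eulerian.

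Finally, the claim for the vertex graphs follows by duality: since $(G,H)$ is geographic if and only if $(H,G)$ is, and the vertex graph $H_v$ of $P$ is exactly the face graph of the dual plan $(H,G)$ associated with $v$, the argument above applied to $(H,G)$ shows that every $H_v$ is Eulerian. Equivalently, one may repeat the argument verbatim using the walk $W_v$ around a small disk neighborhood of $v$, which is a closed walk in $G^*=H$ traversing each edge incident to $v$ once, except loops at $v$---the loops in $G$---which it traverses twice, matching the duplication performed to form $H_v$. The only point that requires care is the bookkeeping in the key step: one must check that the multiplicity of each edge in $W_f$ matches the multiplicity of the corresponding edge(s) in $G_f$, so that $W_f$ really becomes a trail after the duplication. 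This is precisely where the definition of $G_f$ through doubling of the interior edges is used essentially, and I expect it to be the main (if minor) obstacle.
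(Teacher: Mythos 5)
Your proposal is correct and follows essentially the same route as the paper's proof: both exhibit the boundary walk $W_f$ as an Eulerian trail in $G_f$ by matching the two traversals of each interior edge with its two copies created in the duplication step, and both handle the vertex graphs via the walk $W_v$ (equivalently, by duality). No gaps.
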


\begin{proof}
Fix an arbitrary map $M = (S,G,\phi)$ generating $P$. Consider a face $f\in F(M)$. We will show that $W_f$ is an Eulerian trail in the face graph $G_f$. By construction, $W_f$ is a closed walk (but not necessarily a trail) in $G_f'$. Furthermore, every edge $e$ of $G_f'$ occurs in $W_f$ either once or twice; it occurs exactly twice if and only if $e$ is an interior edge of country~$f$. Recall also that the graph $G_f$ is obtained from $G_f'$ by duplicating each edge $e$ that is a loop in $H = G^*$, that is, by duplicating each interior edge of $f$. It follows that every edge of $G_f$ occurs in $W_f$ exactly once, that is, $W_f$ is an Eulerian trail in $G_f$. Similarly, it can be shown that for every vertex $v\in V(M)$, the walk $W_v$ is an Eulerian trail in $H_v$.
\end{proof}

\begin{sloppypar}
\section{Edmonds' characterization and its consequences}\label{sec:Edmonds}
\end{sloppypar}

\begin{sloppypar}
\noindent{\bf Properties of plans.}
A plan $P=(G,H)$ will
be called:
\begin{enumerate}[a)]
  \item\label{condition:b} {\it connected} if $G$ and $H$ are connected;
\item\label{condition:d} {\it locally Eulerian} if all its vertex and face graphs are Eulerian;
\item\label{condition:a} {\it even} if all the elements of its
vertex--face incidence matrix $B_{G,H}$ are even.
\end{enumerate}
\end{sloppypar}

\bigskip
The following characterization of geographic plans was given \hbox{by~\citet{Edmonds}}.

\begin{theorem}\label{thm:Edmonds}
A plan is geographic if and only if it is connected and locally Eulerian.
\end{theorem}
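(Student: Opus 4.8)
The plan is to prove both implications. The forward direction is a matter of collecting facts already in hand, while the converse is where the real work lies, and for it I would reverse the word-representation construction of Section~\ref{sec:preliminaries}.

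For the forward direction, suppose $P=(G,H)$ is geographic, say $P=P(M)$ for a map $M=(S,G,\phi)$ with $H=G^*$. Connectedness of $G$ is forced by the map axiom that every country is an open disk, and connectedness of $H=G^*$ was recorded in the discussion of dual pairs; hence $P$ is connected. That $P$ is locally Eulerian is exactly the content of Lemma~\ref{lem:Eulerian-vertex-and-face-graphs}. So this direction needs only assembly.

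The substance is the converse: from a connected, locally Eulerian plan $P=(G,H)$ I would reconstruct a map. First I would fix an arbitrary orientation of each edge and, for every face $f\in F(P)$, use the hypothesis that the face graph $G_f$ is Eulerian together with Theorem~\ref{thm:Eulerian} to choose an Eulerian trail $T_f$ of $G_f$. Reading $T_f$ cyclically produces a polygon $\Pi_f$ whose sides, in cyclic order, are labelled by the edges traversed by $T_f$ and oriented according to the fixed orientations. Since $G_f$ arises from $G_f'$ by doubling exactly the loops of $H$ incident to $f$ (the interior edges of $f$), every ordinary edge incident to $f$ contributes one side of $\Pi_f$ and every interior edge of $f$ contributes two; thus each edge of $P$ labels precisely two polygon sides in total. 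Gluing sides that carry the same label, respecting orientations, yields by the construction of Section~\ref{sec:preliminaries} a space $S$ with an embedded graph $G'$ on the edge set $E(P)$ whose complementary regions are the open polygons $\Pi_f$, hence open disks.

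It then remains to verify three points, and I expect the manifold check to be the crux. \emph{(i) Connectedness and the surface property:} two distinct polygons are glued along a side exactly when the corresponding faces share a non-loop edge, so the adjacency pattern of the $\Pi_f$ is governed by $H$; connectedness of $H$ makes $S$ connected, and then $G'$ is connected since its complement is a union of disks. Interior points of a polygon or of a glued edge plainly have disk neighbourhoods. \emph{(ii) Vertices assemble correctly:} as the gluing proceeds, polygon corners are identified in groups, and the cyclic sequence of edge-ends and face-corners met while circling the corners associated with a vertex $v$ is precisely the walk $W_v$ in the vertex graph $H_v$. Here the hypothesis that $H_v$ is Eulerian is decisive: by Theorem~\ref{thm:Eulerian} the trail $W_v$ is a single closed trail covering all of $H_v$, so these corners coalesce into one point whose link is a single circle, i.e.\ $v$ becomes a genuine vertex of $G'$ with a disk neighbourhood. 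Hence $S$ is a closed surface and $M=(S,G',\phi)$ is a map. \emph{(iii) The plan is recovered:} the countries of $M$ are the $\Pi_f$, indexed by $F(P)$, with two countries adjacent iff their faces share an edge and an edge interior to a country iff it is a loop of $H$, which identifies $(G')^*$ with $H$; symmetrically, the corner analysis of (ii) matches the vertices and incidences of $G'$ with those of $G$, giving $G'\cong G$ and $P(M)=P$. The main obstacle, and the step I would write out in full detail, is (ii): tracking the orientations through the gluing and showing that the Eulerian property of each $H_v$ is exactly what forces the link of $v$ to be a single circle rather than several disjoint ones, which is simultaneously what makes $S$ a manifold and what pins the vertex set of $G'$ down as that of $G$.
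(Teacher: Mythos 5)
Your forward direction is fine and coincides with the paper's: connectedness comes from the map axioms and duality, and the locally Eulerian property is exactly Lemma~\ref{lem:Eulerian-vertex-and-face-graphs}. Note, however, that the paper does not actually prove the converse; it cites Edmonds for it, and the construction it sketches afterwards (``Building a map generating a given geographic plan'') is precisely the polygon-gluing you describe --- together with an explicit warning that this construction does \emph{not} always produce a map generating the plan.

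The gap in your proposal is step (ii). You claim that for an \emph{arbitrary} choice of Eulerian trails $T_f$ in the face graphs and an arbitrary orientation of the edges, the polygon corners labelled by a vertex $v$ coalesce into a single point because $H_v$ is Eulerian. This is false. The side identifications induce a pairing of the corners at $v$, whose orbits decompose the (doubled) edge set of $H_v$ into one or more closed trails, one per point of $S$ lying over $v$; the Eulerian property of $H_v$ guarantees only that \emph{some} single closed trail covering $H_v$ exists (Theorem~\ref{thm:Eulerian}), not that the particular decomposition forced by your chosen trails and orientations consists of a single trail. The paper's own Examples~9--11 exhibit connected, locally Eulerian plans where the Eulerian trail in each face graph is essentially unique, yet one admissible choice of side orientations makes a vertex split into two points (so the link of $v$ is two circles, not one), or makes two distinct vertices of $G$ collapse to one point of $S$ (Example~10, case a)). So your sentence ``these corners coalesce into one point whose link is a single circle'' is asserting, not proving, the crux. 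The actual content of Edmonds' argument is a merging step, analogous to the trail-merging in the proof of Euler's theorem, showing that the trails $C_f$ and/or the side orientations can always be \emph{re-chosen} so that each vertex yields exactly one corner cycle and no two vertices are identified; without that step (or some substitute for it), the converse is not established.
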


The ``only if'' part follows from the definition of a map and Lemma~\ref{lem:Eulerian-vertex-and-face-graphs}. The ``if'' part of the theorem and its proof by~\citet{Edmonds} can be seen as an extension of Euler's ``K\"onigsberg Bridge'' theorem and its proof from~\cite{euler1736solutio}.

\medskip
A connected and locally Eulerian plan can be generated by different maps on the same surface or even on different surfaces; see the examples below. Nevertheless, the Euler characteristic of all surfaces of the maps generating the plan is unambiguously determined by the plan, according to equality $\chi=|V(P)|-|E(P)|+|F(P)| = n-\ell+m$, see~\eqref{e:chi}. Thus in any case there exist at most two such surfaces, there is only one if  $\chi = 2$  or $\chi =2i+1 < 2$, and there is none if $\chi > 2$. In particular, inequality $|V(P)|-|E(P)|+|F(P)| \le 2$  holds for any plan $P$ which is connected and locally Eulerian, for otherwise there exists no surface for a corresponding map.

\medskip
The following results, including a simple necessary condition for a plan to be geographic, were announced in~\cite{Gurvich-RRR,GurvichShabat}.

\begin{theorem}\label{thm:even}
Let $P = (G,H)$ be a plan. Then, for an arbitrary vertex \hbox{$v\in V(P)$} and face $f\in F(P)$, we have
\begin{equation}\label{e:degrees-in-walks-plans}
B_{G,H}(v,f) = d_{H_v}(f)= d_{G_f}(v) = \sum_{e\in E(P)} B_{G}(e,v) \cdot B_{H}(e,f)
\,.
\end{equation}
Each edge $e\in E(P)$ occurs exactly twice in the sets
$\{E(H_{v})\mid v\in V(P)\}$ as well as in the sets
$\{E(G_{f})\mid f\in F(P)\}$.
Furthermore, if $P$ is a geographic plan generated by a map $M = (S,G,\phi)$, with $H = G^*$, then, for an arbitrary vertex $v\in V(M)$ and country $f\in F(M)$ we have
\begin{equation}\label{e:degrees-in-walks}
2b(v,f)=2b(f,v)=B_{G,G^*}(v,f) = \sum_{e\in E(M)} B_{G}(e,v) \cdot B_{G^*}(e,f)\,.
\end{equation}
\end{theorem}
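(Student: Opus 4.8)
The plan is to handle the three assertions in turn. The four-term chain in~\eqref{e:degrees-in-walks-plans} I would prove by unwinding the definitions of the vertex--face incidence matrix and of the vertex and face graphs; the statement that each edge occurs exactly twice across $\{E(H_v)\mid v\in V(P)\}$ and across $\{E(G_f)\mid f\in F(P)\}$ I would deduce from the property (already recorded in the excerpt) that every row of $B_G$ and of $B_H$ sums to $2$; and finally~\eqref{e:degrees-in-walks} for a geographic plan I would obtain from the fact, established inside the proof of Lemma~\ref{lem:Eulerian-vertex-and-face-graphs}, that $W_f$ is a closed Eulerian trail of $G_f$ and $W_v$ is a closed Eulerian trail of $H_v$.

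First I would observe that the rightmost equality in~\eqref{e:degrees-in-walks-plans} is merely the definition of the product $B_{G,H}=B_G^T\cdot B_H$, so $B_{G,H}(v,f)=\sum_{e\in E(P)} B_G(e,v)\cdot B_H(e,f)$. For the middle equalities I would unwind the construction of $H_v$: an edge $e$ with $B_G(e,v)\ge 1$ lies in $H_v'$, and it is doubled in $H_v$ precisely when it is a loop of $G$ at $v$, i.e.\ when $B_G(e,v)=2$; hence each of its occurrences contributes $B_H(e,f)$ to $d_{H_v}(f)$, for a total of $B_G(e,v)\cdot B_H(e,f)$. Since edges with $B_G(e,v)=0$ contribute nothing, summing over all edges gives $d_{H_v}(f)=\sum_{e\in E(P)} B_G(e,v)\cdot B_H(e,f)$, and the identity for $d_{G_f}(v)$ follows by the symmetric argument (interchanging the roles of $G$ and $H$, of $v$ and $f$, and using that $G_f$ doubles exactly the edges that are loops of $H$ at $f$). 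For the edge-counting statement, a fixed edge $e$ either joins two distinct vertices $v_1,v_2$, so $B_G(e,v_1)=B_G(e,v_2)=1$ and $e$ appears once in each of $E(H_{v_1})$ and $E(H_{v_2})$, or it is a loop at a single vertex $v$, so $B_G(e,v)=2$ and $e$ is doubled, appearing twice in $E(H_v)$; in both cases $e$ occurs exactly twice, and the dual claim follows from the row-sum property of $B_H$.

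For a geographic plan I would invoke that $W_f$ and $W_v$ are closed Eulerian trails of $G_f$ and $H_v$, respectively. The key elementary observation is that in a closed Eulerian trail of a graph, each vertex $u$ occurs exactly $d(u)/2$ times: reading the trail cyclically, each occurrence of $u$ is flanked by two edge-ends at $u$, and each of the $d(u)$ edge-ends at $u$ is accounted for exactly once this way. Applying this to $v$ inside $W_f$ gives $b(v,f)=d_{G_f}(v)/2$, and applying it to $f$ inside $W_v$ gives $b(f,v)=d_{H_v}(f)/2$; combined with the already-proved equalities $d_{G_f}(v)=d_{H_v}(f)=B_{G,G^*}(v,f)$, this yields $2b(v,f)=2b(f,v)=B_{G,G^*}(v,f)$.

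The hard part will be the consistent bookkeeping of loop multiplicities throughout: one must match the convention that a loop contributes an entry $2$ to the incidence matrices with the edge-doubling in the definitions of $H_v$ and $G_f$, and with the degree/occurrence count in the Eulerian trails. In particular, the occurrence count must be argued through the cyclic structure of the closed trail, so that a loop at $u$ contributes $2$ to $d(u)$ and is split correctly across the occurrences of $u$; a single degenerate check (the one-vertex, one-loop trail, where the unique occurrence of $u$ is flanked by the loop on both sides) confirms that $2\,(\text{occurrences})=\text{degree}$ even in this boundary case.
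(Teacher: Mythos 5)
Your proposal is correct, and for the first two assertions it coincides with the paper's proof in all but notation: the paper establishes $d_{G_f}(v)=\sum_{e}B_G(e,v)B_H(e,f)$ by partitioning $E(P)$ into classes $E_{i,j}$ according to the pair $(B_G(e,v),B_H(e,f))$ and comparing the resulting sums, which is exactly your per-edge contribution count $B_G(e,v)\cdot B_H(e,f)$ aggregated differently; the twice-occurrence claim is the same case analysis on loops versus non-loops. Where you genuinely diverge is in~\eqref{e:degrees-in-walks}. The paper proves $2b(v,f)=\sum_e B_G(e,v)B_{G^*}(e,f)$ from scratch by double-counting the set of triples $(i,j,e)$ with $i$ an occurrence of $v$ on $W_f$, $j\in\{-,+\}$, and $e$ the flanking edge, handling loops at $v$ explicitly; you instead invoke the fact (from the proof of Lemma~\ref{lem:Eulerian-vertex-and-face-graphs}) that $W_f$ is a closed Eulerian trail of $G_f$, apply the general lemma that in a closed Eulerian trail each vertex occurs $d(u)/2$ times, and chain the result with the already-proved $d_{G_f}(v)=B_{G,G^*}(v,f)$ from~\eqref{e:degrees-in-walks-plans}. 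The underlying count (edge-ends at $v$ versus occurrences of $v$) is the same in both versions, but your packaging is more modular: it isolates a reusable statement about Eulerian trails and reuses the first part of the theorem, at the cost of depending on the internal content of Lemma~\ref{lem:Eulerian-vertex-and-face-graphs} rather than only its statement; the paper's triple-count is self-contained. Your attention to the loop bookkeeping (including the one-vertex, one-loop degenerate case, which is Example 8 in the paper) is exactly the point where such an argument could silently fail, and you handle it correctly.
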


\begin{proof}
Fix $P$, $v$, and $f$ as above. The equality $B_{G,H}(v,f) = \sum_{e\in E(P)} B_{G}(e,v) \cdot B_{H}(e,f)$ follows from the definition of the bimatrix of the plan, $B_{G,H}$.

Let us justify the equality $d_{G_f}(v) = \sum_{e\in E(P)} B_{G}(e,v) \cdot B_{H}(e,f)$.
For all $i,j\in \{0,1,2\}$, let $E_{i,j}$ be the set of edges $e\in E(P)$ such that
$B_G(e,v) = i$ and $B_H(e,f) = j$.
Then the sets $\{E_{i,j}\mid i,j\in \{0,1,2\}\}$ partition $E(P)$.
We have

\begin{eqnarray}\label{e:sums}
\begin{aligned}
\sum_{e\in E(P)} B_{G}(e,v) \cdot B_{H}(e,f)
 &={} \sum_{(i,j)\in \{0,1,2\}^2}\Big(\sum_{e\in E_{i,j}} B_{G}(e,v) \cdot B_{H}(e,f)\Big)\\
 &={}
 |E_{1,1}| + 2|E_{1,2}|+ 2|E_{2,1}|+ 4|E_{2,2}|\\
&={}
 \left(|E_{1,1}| + 2|E_{2,1}|\right) +
 2\left(|E_{1,2}|+ 2|E_{2,2}|\right)\,.
\end{aligned}
\end{eqnarray}
Recall that the degree of a vertex in a graph is defined as the number of edges incident to it, with loops counted twice. Since $G_f'$ is the subgraph of $G$ formed by all the vertices and edges incident to $f$, we thus have $$d_{G_f'}(v) = (|E_{1,1}| + |E_{1,2}|) +
2(|E_{2,1}| + |E_{2,2}|) = (|E_{1,1}| + 2|E_{2,1}|) +
(|E_{1,2}| + 2|E_{2,2}|)\,.$$
Furthermore, since $G_f$ is the graph obtained from $G_f'$ by duplicating each edge $e$ corresponding to a loop in $H$, we have
\begin{equation}\label{e:sums-2}
d_{G_f}(v) = \left(|E_{1,1}| + 2|E_{2,1}|\right) +
 2\left(|E_{1,2}|+ 2|E_{2,2}|\right)\,.
\end{equation}
Combining~\eqref{e:sums} and~\eqref{e:sums-2} we obtain
$d_{G_f}(v) = \sum_{e\in E(P)} B_{G}(e,v) \cdot B_{H}(e,f)$, as claimed.

Equality
$d_{H_v}(f) = \sum_{e\in E(P)} B_{G}(e,v) \cdot B_{H}(e,f)$
follows by duality. Thus, equations~\eqref{e:degrees-in-walks-plans} are proved.

\medskip
Next we show that each edge $e\in E(P)$ occurs exactly twice in the sets
$\{E(H_{v})\mid v\in V(P)\}$. If $e$ is a loop in $G$, say at vertex $v$, then $e$ will appear duplicated in the graph $H_v$ and will not appear in any graph $H_w$ for $w\in V(P)\setminus \{v\}$. If $e$ is a not loop in $G$, say $e$ connects distinct vertices $v$ and $w$, then $e$ will appear exactly once in each of the graphs $H_v$ and $H_w$ and will not appear in any graph $H_u$ for all $u\in V(P)\setminus \{v,w\}$. Thus, in either case, $e$ occurs exactly twice in the sets $\{E(H_{v})\mid v\in V(P)\}$. The fact that each edge $e\in E(P)$ occurs exactly twice in the sets $\{E(G_{f})\mid f\in F(P)\}$ follows by duality.

\medskip
It remains to prove equations~\eqref{e:degrees-in-walks}.
Suppose that $P = (G,G^*)$ is a geographic plan generated by a map $M$
and let $v\in V(M)$ and $f\in F(M)$. Let us justify the equality
$2b(v,f) = \sum_{e\in E(M)} B_{G}(e,v) \cdot B_{G^*}(e,f)\,.$
Fix a cyclic order of occurrences of $v$ on $W_f$, say $v^1,\ldots, v^{b}$, where $b = b(v,f)$. For each $i\in \{1,\ldots, b\}$, let us denote by
$e^{i,-}$ the edge of $G$ immediately preceding $v^i$ on $W_f$ and by
$e^{i,+}$ the edge of $G$ immediately following $v^i$ on $W_f$.
The sequence of edges
$$\sigma^v = (e^{1,-},e^{1,+}, \ldots, e^{b,-},e^{b,+})$$
contains precisely the edges incident to $v$ and $f$.

Let $X$ be the number of triples $(i,j,e)$ such that $i\in \{1,\ldots,b\}$,
$j\in \{-,+\}$ and $e = e^{i,j}$. We now count $X$ in two ways. Every pair $i\in \{1,\ldots,b\}$ and
$j\in \{-,+\}$ defines a unique triple $(i,j,e)$ in $X$, by setting $e = e^{i,j}$.
Thus, $|X| = 2b$.

The definition of incidence matrix $B_{G^*}$ implies that for each edge $e\in E(M)$, the number of occurrences of $e$ on $W_f$ equals $B_{G^*}(e,f)$. Let $E_v$ denote the set of edges incident to $v$. Consider the sequence of edges forming $W_f$ and its subsequence $\sigma$ formed by the edges in $E_v$. This subsequence contains each edge $e\in E_v$ precisely $B_{G^*}(e,f)$ times.
If $e$ is a loop at $v$, then each of the $B_{G^*}(e,f)$ occurrences of $e$ in this subsequence corresponds to exactly two elements of $X$, namely to two triples of the form $(i,+,e)$ and $(i+1,-,e)$ for some $i\in \{1,\ldots, b\}$ (where $b+1 = 1$).
If $e$ is a regular edge, then each of the $B_{G^*}(e,f)$ occurrences of $e$ in this subsequence corresponds to exactly one element of $X$, namely to a triple of the form $(i,+,e)$ or $(i,-,e)$ for some $i\in \{1,\ldots, b\}$. For an edge $e\in E_v$, we have $B_{G}(e,v) = 2$ if $e$ is a loop at $v$ and $B_{G}(e,v) = 1$, otherwise.
Thus, for each edge $e\in E_v$, all the occurrences of $e$ in $\sigma$ generate exactly $B_{G}(e,v)\cdot B_{G^*}(e,f)$ triples in $X$. Consequently,
$$|X| = \sum_{e\in E_v}B_{G}(e,v)\cdot B_{G^*}(e,f) = \sum_{e\in E(M)}B_{G}(e,v)\cdot B_{G^*}(e,f)$$ and equality
$2b(v,f) = \sum_{e\in E(M)} B_{G}(e,v) \cdot B_{G^*}(e,f)$
is proved.

Equality
$2b(f,v) = \sum_{e\in E(M)} B_{G}(e,v) \cdot B_{G^*}(e,f)$
follows by duality. This shows equations~\eqref{e:degrees-in-walks} and completes the proof of the theorem.
\end{proof}

\begin{corollary}\label{cor:even}
Every geographic plan is even.
\end{corollary}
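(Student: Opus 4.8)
The plan is to read off the claim directly from Theorem~\ref{thm:even}, which already contains all the work. Let $P = (G,H)$ be a geographic plan; by definition it is generated by some map $M = (S,G,\phi)$ with $H = G^*$. Fix an arbitrary vertex $v \in V(P)$ and face $f \in F(P)$. By definition of the vertex--face incidence matrix, the entry at position $(v,f)$ is $B_{G,H}(v,f) = \sum_{e \in E(P)} B_{G}(e,v)\cdot B_{H}(e,f)$, which is exactly the quantity appearing on the right-hand side of~\eqref{e:degrees-in-walks}.

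Theorem~\ref{thm:even} asserts that this quantity equals $2b(v,f)$, where $b(v,f)$ is the number of occurrences of $v$ on the boundary walk $W_f$ and is therefore a non-negative integer. Hence every entry of $B_{G,H}$ is twice a non-negative integer, so it is even, and $P$ is even by definition. This is the whole argument; since the substantive combinatorial counting was carried out in the proof of Theorem~\ref{thm:even}, there is no real obstacle remaining here.

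For robustness I would also record an alternative derivation that does not invoke the factor-of-two counting but instead rests on local Eulerianity. By the ``only if'' direction of Edmonds' characterization (Theorem~\ref{thm:Edmonds}), equivalently by Lemma~\ref{lem:Eulerian-vertex-and-face-graphs}, every vertex graph $H_v$ of a geographic plan is Eulerian. By the equalities~\eqref{e:degrees-in-walks-plans} we have $B_{G,H}(v,f) = d_{H_v}(f)$, and in an Eulerian graph all vertex degrees are even (the edge set partitions into cycles by Theorem~\ref{thm:Eulerian}, so each degree is even). Thus $B_{G,H}(v,f)$ is even for every $v$ and $f$. The only point to watch in this second route is that the definition of Eulerian permits isolated vertices and that a face $f$ not incident to $v$ is simply not a vertex of $H_v$; in either case the relevant degree is $0$, so the even-degree conclusion holds vacuously.
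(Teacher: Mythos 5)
Your primary argument is exactly the paper's intended derivation: the corollary is stated immediately after Theorem~\ref{thm:even} precisely because equation~\eqref{e:degrees-in-walks} gives $B_{G,G^*}(v,f) = 2b(v,f)$, an even number, for every vertex--face pair. The alternative route via $B_{G,H}(v,f) = d_{H_v}(f)$ and the Eulerianity of the vertex graphs (Lemma~\ref{lem:Eulerian-vertex-and-face-graphs} together with Theorem~\ref{thm:Eulerian}) is also sound, including your observation that a face not incident to $v$ contributes a zero entry; both are correct, so nothing further is needed.
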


\medskip
For example, the bimatrix
$\bfB =
\left(\begin{array}{c|c}
      11&11
      \end{array}\right)$ is not associated with
any geographic plan, and neither is
$\bfB = \left( \begin{array}{c|c}11&11\\
02&20 \end{array}\right)$.

\bigskip
\begin{sloppypar}
\noindent{\bf Building a map generating a given geographic plan.}
Let $P = (G,H)$ be a geographic plan. By Theorem~\ref{thm:Edmonds},
$P$ is connected and locally Eulerian. Label the vertices of $G$ as $v_1,\ldots, v_n$ and consider for each face $f\in F(P)$
the following procedure, which will associate to $f$ a point- and side-labeled polygon $Q_f$ in the plane, with oriented sides.
Fix an Eulerian trail $C_f$ in $G_f$. By definition, $C_f$ is an alternating sequence of vertices and edges incident with face $f$, of length $\ell_f := |E(G_f)|$. Fix an $\ell_f$-gon $Q_f$ in the plane. Label the $\ell_f$ sides of $Q_f$ and the $\ell_f$ points on its boundary with edges and vertices of $G_f$ so that when the polygon is traversed clockwise, the labels spell out trail $C_f$. Each side $s$ of $Q_f$ is now labeled with an edge $e$ of $G$ with endpoints $v_i$ and $v_j$ where $i\le j$. We orient $s$ from $v_i$ to $v_j$. In particular, we orient $s$ arbitrarily if $e$ is a loop. This completes the description of the procedure for each face $f\in F(P)$. We further assume that the polygons $\{Q_f\mid f\in F(P)\}$ are pairwise disjoint.

By Theorem~\ref{thm:even}, each edge $e\in E(P)$ occurs exactly twice in the sets
$\{E(G_{f})\mid f\in F(P)\}$. Therefore, each edge $e\in E(P)$ labels exactly two sides of polygons in the set $\{Q_f\mid f\in F(P)\}$. This naturally partitions the set of all the polygon sides into pairs. In particular, the sum of the numbers $\ell_f$ over all $f\in F(P)$ is even. Furthermore, for any proper subset of the set of all polygons $\{Q_f\mid f\in F(P)\}$ there exists a side label that only occurs once in the polygons of this subset, since otherwise $H$ would be disconnected.

We now glue the polygons along the pairs of sides labeled by the same edge of $G$, in accordance with their orientations as defined above. We thus obtain a surface $S$, but not necessarily a map of the form $(S,G,\phi)$. There may be two reasons for this.
First, after gluing the sides, some vertices of $G$ may still label more than one point of $S$. Second, it may happen that distinct vertices of $G$ become identified after gluing. However, as shown by Edmonds in his proof of Theorem~\ref{thm:Edmonds}, these problematic situations can always be avoided by an appropriate choice of the Eulerian trails $C_f$ in the face graphs $G_{f}$ and/or appropriate orientations of sides of the polygons.
\end{sloppypar}

\bigskip
\begin{sloppypar}
\noindent{\bf More examples.} We illustrate the above construction with several examples.
\begin{enumerate}
  \item[9.] Consider the plan $P = (G,H)$ given by the bimatrix
  $\bfB =
      \left(\begin{array}{c|c}
      2&2
      \end{array}\right)$.
Then $|V(P)| = |E(P)| = |F(P)| = \chi =1$.
The face graph $G_f$ of the unique face $f$ consists of the unique
vertex $v$ and two copies of the unique edge $e$, which is a loop.
Denoting the two copies of $e$ in $G_f$ by $e'$ and $e''$, there is an essentially unique Eulerian trail $C_f$ in the face graph $G_{f}$, namely $(v,e',v,e'',v)$.
In the $2$-gon $Q_f$ we can orient the two sides in essentially two ways:
either a) in the same direction (both clockwise or both counterclockwise) or b) oppositely (one clockwise and one counterclockwise). In case a), we obtain a map on the projective plane $C_1$. It is easy to check that this map has only one vertex, and thus really generates plan $P$.  In case b), we obtain a sphere $S_0$ with two vertices, which, however, must be identified, because there is only one vertex in the plan. In this case, we do not obtain a map.

Note that in terms of word representations of surfaces, the map given in case a) above can be described as $(a\ a)$, while the sequence $(a\ \overline{a})$ corresponding to case b) is not valid.

\item[10.] Let us return to Example 1 on p.~\ref{first-example}. The bimatrix of the plan is $\bfB = \left( \begin{array}{c|c} 11&2\end{array}\right)$.
Let $V(P) = \{v_1,v_2\}$, $E(P) = \{e\}$, and $F(P) = \{f\}$. The graph $G_f$ has two vertices, $v_1$ and $v_2$, joined by two copies of $e$, say $e'$ and $e''$. There is an essentially unique Eulerian trail $C_f$ in the face graph $G_{f}$, namely $(v_1,e',v_2,e'',v_1)$.
    Again, in the $2$-gon $Q_f$ we can orient the two sides in essentially two ways: either a) in the same direction (both clockwise or both counterclockwise) or b) oppositely (one clockwise and one counterclockwise). In case a), we obtain a projective plane $C_1$. However, the resulting map has only one vertex, and thus does not correspond to the original plan, which has two vertices. In case b), we obtain a map on a sphere $S_0$ with two vertices, which does generate plan $P$.

    In terms of word representation of surfaces, the sequence given in case a) above can be described as $(a\ a)$, which is not valid, while the valid map given in case b) can be described with the sequence $(a\ \overline{a})$.
\end{enumerate}

Given a geographic plan $P = (G,H)$, we say that the procedure as above is {\it valid} if it indeed results in a map $M = (S,G,\phi)$ generating $P$. This happens if and only if each vertex of $G$ corresponds to a unique point on surface $S$.
In this case, the corresponding word representation of the map will also be called {\it valid} for $P$.
As the above examples show, not all word representations of the resulting surface $S$ are valid for the given plan. For the sake of simplicity, in what follows we will denote the edges of the graph $G$ and sides of the polygons with the corresponding letters in the resulting word representations of the surface.

\medskip
Let us call a plan $P = (G,H)$ {\it simple} if all its vertex graphs $H_v$ and face graphs $G_f$ are cycles. It is easy to see that a geographic simple plan corresponds to a unique map, and hence a unique surface. An example of a simple geographic plan is given by bimatrix
$$\bfB =
\left(\begin{array}{cccc|cccc}
1&1&0&0&1&1&0&0\\
1&1&0&0&0&0&1&1\\
1&0&1&0&1&0&0&1\\
1&0&1&0&0&1&1&0\\
0&1&0&1&0&1&1&0\\
0&1&0&1&1&0&0&1\\
0&0&1&1&1&1&0&0\\
0&0&1&1&0&0&1&1
\end{array}\right)\,.$$
This plan is generated by a unique map on the torus.
Each of the four vertex graphs and four face graphs is a $4$-cycle. We leave it to the reader to construct this map as well as a similar example of a simple plan generated from a unique map on the Klein bottle.

\medskip
On the other hand, some geographic plans correspond to multiple maps, on different surfaces. The next example illustrates this for $\chi = 0$.

\medskip
\begin{enumerate}
  \item[11.] Let $P = (G,H)$ be a plan given by the bimatrix
$\bfB = \left( \begin{array}{c|c}
2&2\\
2&2\end{array}\right)$.
Then $|V(P)| = |F(P)| = 1$ and $|E(P)| = 2$, hence $\chi = 0$. The face graph $G_{f}$ contains two doubled loops $a$ and $b$.

Up to map-preserving transformations there exist eight different word representations of the resulting surface:
$(a\ b\ \overline{a}\ \overline{b})$,
$(a\ b\ \overline{b}\ \overline{a})$, $(a\ a\ b\ \overline{b})$, $(a\ \overline{a}\ b\ b)$,
$(a\ b\ a\ b)$, $(a\ a\ b\ b)$, $(a\ b\ \overline{a}\ b)$, and $(a\ b\ a\ \overline{b})$.
The first one generates the normal form map on the torus $S_1$.
The second one generates a sphere $S_0$ with three vertices, which, however, must be
identified because there is only one vertex in the plan. In this case, we do not obtain a map. The third, the  fourth, and the fifth representations all generate the projective plane $C_1$ with two identical vertices. Thus, they do not generate maps. Finally, the last three representations generate maps on the Klein bottle $C_2$ (the first of them in the normal form).
\end{enumerate}
\end{sloppypar}

Further examples of geographic plans and the corresponding maps can be found in~\cite{Gurvich-RRR}.

\section{Degree sequences of geographic plans}\label{sec:degree-sequences}

Given a graph $G$, the graph obtained from $G$ by duplicating each edge will be called the {\it double graph of $G$} and denoted by $\GG$. Given a connected graph $G=(V,E)$, $|V|=n$, we call the sequence $(d_G(v)\mid v\in V)$ its {\it degree sequence}. If $\bfd=(d_1,\ldots,d_n)=(d_G(v)\mid v\in V)$ (after an appropriate of permutation of the vertices) then we call $G$ a \emph{realization} of $\bfd$. The {\it degree sequence} of a plan $P = (G,H)$ is the pair $(\bfd;\bft)$ where $\bfd$ is the degree sequence of $G$ and $\bft$ is the degree sequence of $H$.

Let $G = (V,E)$ be a connected graph and $\bft=(t_1,\ldots,t_m)\in\ZZ_+^m$ a sequence of non-negative integers such that $\sum_{j = 1}^mt_j = 2|E|$.
Let us consider a partition $\mathcal{E} = \{E_1,\ldots, E_m\}$ of the edge set of the double graph $\GG$. The indices $1,\ldots, m$ will be called {\it colors} and the sets $E_1,\ldots, E_m$ the {\it color classes}.
To each vertex $v\in V(G)$ we associate a graph $H_v^{\mathcal{E}}$ on vertex set $$V(H_v^{\mathcal{E}}) = \{j\in \{1,\ldots, m\}\mid \textrm{there exists an edge $e\in E_j$ incident with }v\}$$
such that the edge set of $H_v^{\mathcal{E}}$ consists of those color pairs $\{j,k\}$ with $j,k\in V(H_v^{\mathcal{E}})$ and $j\neq k$ for which there exists an edge $e\in E(G)$ incident with $v$ such that $e'\in E_j$ and $e''\in E_k$ where $e'$ and $e''$ are the two copies of $e$ in $\GG$.

The partition $\mathcal{E}$ is said to be:
\begin{itemize}
  \item {\it Eulerian} if each of the graphs $(V,E_j)$ defined by the color classes is Eulerian, $j= 1,\ldots, m$;
 \item a {\it $\bft$-partition} if $|E_j| = t_j$ for all $j\in \{1,\ldots, m\}$;
 \item {\it locally connected} if for each vertex
 $v\in V(G)$ the graph $H_v^{\mathcal{E}}$ is connected.
\end{itemize}

\begin{theorem}\label{thm:geographic-degree-sequences}
Consider two integer sequences $\bfd=(d_1,\ldots,d_n)\in\ZZ_+^n$ and $\bft=(t_1,\ldots,t_m)\in\ZZ_+^m$.
Then, the following statements are equivalent.
\begin{enumerate}[a)]
  \item $(\bfd;\bft)$ is the degree sequence of a geographic plan.
  \item $(\bft;\bfd)$ is the degree sequence of a geographic plan.
\item There exists a connected realization $G$ of $\bfd$ such that the double graph $\GG$ has a locally connected Eulerian $\bft$-partition.
\item There exists a connected realization $H$ of $\bft$ such that the double graph $\HH$ has a locally connected Eulerian $\bfd$-partition.
\end{enumerate}
\end{theorem}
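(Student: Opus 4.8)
The plan is to prove the cycle of equivalences by first disposing of the two ``dual'' equivalences cheaply and then concentrating on the single substantive equivalence a)$\Leftrightarrow$c). The implication a)$\Leftrightarrow$b) is immediate from the fact, recorded in the preliminaries, that a plan $(G,H)$ is geographic if and only if $(H,G)$ is, together with the observation that the degree sequence of $(H,G)$ is $(\bft;\bfd)$. By the same token, b)$\Leftrightarrow$d) is literally a)$\Leftrightarrow$c) with the roles of $G$ and $H$ (equivalently, of $\bfd$ and $\bft$) interchanged, so it needs no separate argument. Since a)$\Leftrightarrow$b), a)$\Leftrightarrow$c), and b)$\Leftrightarrow$d) together force all four statements to be equivalent, everything reduces to proving a)$\Leftrightarrow$c).

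For a)$\Rightarrow$c), I would start from a geographic plan $P=(G,H)$ realizing $(\bfd;\bft)$, so that $G$ is a connected realization of $\bfd$, and read off a colouring of $\GG$ from the faces of $H$: colour the two copies $e',e''$ of each edge $e$ by the (one or two) faces of $H$ incident to $e$, using the same colour for both copies exactly when $e$ is a loop in $H$ (an interior edge). Indexing colours by faces, the number of copies receiving colour $f$ is the number of edge--face incidences at $f$, i.e.\ $d_H(f)=t_f$, so this is a $\bft$-partition; moreover the colour class $(V,E_f)$ coincides (after discarding isolated vertices) with the face graph $G_f$, which is Eulerian by Lemma~\ref{lem:Eulerian-vertex-and-face-graphs}, so the partition is Eulerian. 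For local connectedness I would observe that $H_v^{\mathcal{E}}$ is exactly the simple graph obtained from the vertex graph $H_v$ by deleting loops and collapsing multiplicities, on the same vertex set (the faces incident to $v$); since $H_v$ is Eulerian and hence connected, $H_v^{\mathcal{E}}$ is connected.

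For the converse c)$\Rightarrow$a), I would run the construction backwards: given a connected realization $G$ of $\bfd$ and a locally connected Eulerian $\bft$-partition $\mathcal{E}=\{E_1,\dots,E_m\}$ of $\GG$, define $H=(F,E)$ with face set $F=\{1,\dots,m\}$ by declaring the edge $e$ (with copies coloured $j$ and $k$) to be incident to faces $j$ and $k$, so that $e$ becomes a loop at $j$ precisely when both copies are coloured $j$. Then $d_H(j)=|E_j|=t_j$, so $H$ realizes $\bft$, and both incidence matrices have row sums $2$, so $(G,H)$ is a plan. To invoke Edmonds' criterion (Theorem~\ref{thm:Edmonds}) I must check that $(G,H)$ is connected and locally Eulerian. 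The face graphs cause no trouble: $G_j=(V,E_j)$ by construction, hence Eulerian. For the vertex graphs I would prove the degree identity $d_{H_v}(j)=d_{(V,E_j)}(v)$ by the same incidence bookkeeping used in the proof of Theorem~\ref{thm:even} (each copy of $e$ coloured $j$ contributes $B_G(e,v)$ to both sides); since $(V,E_j)$ is Eulerian, every such degree is even, and local connectedness gives connectedness of $H_v^{\mathcal{E}}$ and hence of $H_v$, so Theorem~\ref{thm:Eulerian} makes each $H_v$ Eulerian. Finally, connectedness of $H$ would follow by colouring each vertex of $G$ by the $H$-component of its incident faces: local connectedness makes this well defined, adjacent vertices receive the same component, and connectedness of $G$ then forces a single component of $H$ to carry all edges.

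I expect the vertex-graph analysis to be the main obstacle, since it is the only place where the two combinatorial worlds genuinely have to be matched: the face-graph conditions translate into the Eulerian colour classes essentially by definition, whereas the information about the vertex graphs is encoded only indirectly, split between the \emph{even-degree} half (supplied by the Eulerian property of the colour classes through the identity $d_{H_v}(j)=d_{(V,E_j)}(v)$) and the \emph{connectedness} half (supplied by local connectedness). Care is also needed with degenerate boundary cases --- for instance colour classes with $t_j=0$, which produce isolated faces and thus force the corresponding statements to fail on both sides --- but these should be dispatched by inspection once the positive-degree case is settled.
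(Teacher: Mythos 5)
Your proposal is correct and follows essentially the same route as the paper's proof: reduce everything to a)$\Leftrightarrow$c) via the duality of plans, obtain the $\bft$-partition from the face graphs (with Eulerianity from Lemma~\ref{lem:Eulerian-vertex-and-face-graphs} and local connectedness from the connectedness of the Eulerian vertex graphs), and conversely build $H$ from the colour classes and verify Edmonds' criterion exactly as you describe, including the degree identity $d_{H_v}(j)=d_{(V,E_j)}(v)$ and the propagation of connectedness of $H$ along $G$ via local connectedness. The only cosmetic difference is that the paper proves connectedness of $H$ by an explicit path-chaining argument in $\GG$ rather than your component-colouring of $V(G)$, which is the same idea.
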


\begin{proof}
The equivalences between statements $a)$ and $b)$
and between statements $c)$ and $d)$ follow from the fact that a plan $(G,H)$ is geographic if and only if the plan $(H,G)$ is.

\medskip
Next we prove the implication $a)\Rightarrow c)$. Suppose that $(\bfd;\bft)$ is the degree sequence of a geographic plan $P = (G,H)$. Then $G$ is a connected realization of $\bfd$.
It remains to show that the double graph $\GG$ has a locally connected Eulerian $\bft$-partition. Let us identify the vertex set of $H$ (that is, the face set of the plan) with
the set $\{1,\ldots, m\}$ and consider its elements as colors.
Using these colors, we now partition the edges of
$\GG$, as follows. For each edge $e\in E(P)$ and each color
$f\in F(P) = \{1,\ldots, m\}$, we assign color $f$ to exactly $B_H(e,f)$ copies of $e$ in $\GG$. (Recall that $B_H(e,f)\in \{0,1,2\}$.) We do this in such a way that every edge of $\GG$ gets assigned a unique color. This is possible since for each edge $e\in E(P)$, the sum of the entries of the corresponding row of $B_H$ is equal to $2$. (Equivalently, each edge $e\in E(P)$ occurs exactly twice in the sets $\{E(G_{f})\mid f\in F(P)\}$, see Theorem~\ref{thm:even}.)

Notice that for each color $f\in F(P)$, the corresponding color class is the same as the edge set of the face graph $G_f$. Thus, by Lemma~\ref{lem:Eulerian-vertex-and-face-graphs}, each color class defines an Eulerian subgraph of $\GG$. Thus, the above coloring procedure defines an Eulerian partition $\mathcal{E} = \{E_1,\ldots, E_m\}$ of the double graph $\GG$. For each color $f\in \{1,\ldots, m\}$, we also have $|E_f| = \sum_{e\in E(P)}B_H(e,f) = t_f$, where the last equality holds since $H$ is a realization of $\bft$. Thus, $\mathcal{E}$ is a $\bft$-partition.

Furthermore, Lemma~\ref{lem:Eulerian-vertex-and-face-graphs}
implies that all vertex graphs $H_v$, $v\in V(G)$, of $P$ are Eulerian, and thus connected. Since for each $v\in V(G)$, the graphs $H^{\mathcal{E}}_v$ and $H_v$ have the same vertex set
and two vertices of $H^{\mathcal{E}}_v$ are adjacent if and only if they are adjacent in $H_v$, the graph $H^{\mathcal{E}}_v$ is connected as well. This shows that the partition
$\mathcal{E}$ is also locally connected and establishes the implication $a)\Rightarrow c)$.

\medskip
Finally, we prove the implication $c)\Rightarrow a)$.
Suppose that there exists a connected realization $G$ of $\bfd$ such that the double graph $\GG$ has a locally connected Eulerian $\bft$-partition $\mathcal{E}$. We complete the proof by constructing a graph $H$ such that $P = (G,H)$ is a geographic plan with degree sequence $(\bfd;\bft)$.

Since $\mathcal{E}$ is a $\bft$-partition, we have $\mathcal{E} = \{E_1,\ldots, E_m\}$ such that $|E_j| = t_j$ for all $j\in \{1,\ldots, m\}$. For each $j\in \{1,\ldots, m\}$, let $V_j$
be the set of vertices of $\GG$ incident with an edge in $E_j$.

The graph $H$ is defined as follows. The vertex set of $H$ is $\{1,\ldots, m\}$. The edge set of $H$ is in bijective correspondence with the edge set of $G$. For each edge $e$ of $G$ there are two copies $e'$ and $e''$ of $e$ in $\GG$; we say that $e$ connects colors $f,g\in \{1,\ldots, m\}$ (possibly with $f = g$) in $H$ if $e'\in E_f$ and $e''\in E_g$ or vice versa. Clearly, $P = (G,H)$ is a plan.

Consider an arbitrary color $f\in \{1,\ldots, m\}$.
By the above construction, the degree of $f$ in $H$ is exactly $|E_f| = t_f$. This shows that $H$ is a realization of $\bft$ and hence $(\bfd;\bft)$ is the degree sequence of $P$.

It remains to prove that $P$ is geographic. By Theorem~\ref{thm:Edmonds}, this holds if and only if $P$ is connected and locally Eulerian.

Graph $G$ is connected by assumption. To see that $H$ is connected, let us consider two colors $f,g\in V(H)$.
Since $G$ is connected, so is $\GG$, and hence there exists a path
$(v_1,e_1,v_2,\ldots, e_{k-1},v_k)$ in $\GG$ such that
$v_1\in V_f$ and $v_k\in V_g$. Furthermore, let $e_0$ be an edge in $E_f$ incident with $v_1$, let $e_k$ be an edge in $E_g$ incident with $v_k$, and for each $j\in \{0,1,\ldots, k\}$ let $h_j$ be a color such that $e_j\in E_{h_j}$ where $h_0 = f$ and $h_k = g$. Since $\mathcal{E}$ is locally connected, for each $j\in \{1,\ldots,k\}$, colors $h_{j-1}$ and $h_j$ are connected by a path in the graph $H_{v_j}^{\mathcal{E}}$ and, since
$H_{v_j}^{\mathcal{E}}$ is a subgraph of $H$, they are connected in $H$, too. Consequently, $f = h_0$ and $g = h_k$ are also connected in $H$. This shows that $H$ is connected and hence the plan $P = (G,H)$ is connected.

It remains to prove that $P$ is locally Eulerian. In other words, its vertex and face graphs are Eulerian. The face graphs $G_f$ of $P$ are Eulerian since
$G_f$ is isomorphic to $(V_f,E_f)$ and the partition $\mathcal{E}$ is Eulerian.
Consider now a vertex graph $H_v$ for some $v\in V(G)$.
Since the partition ${\mathcal{E}}$ is locally connected, the graph $H^{\mathcal{E}}_v$ is connected. This implies that the vertex graph $H_v$ is connected, since the adjacency relation of these two graphs is the same. By Theorem~\ref{thm:Eulerian}, it is enough to show that all vertices of $H_v$ have even degrees. Fix a vertex $f\in V(H_v)$.
Then $v\in V_f$. Since the graph $(V_f,E_f)$ is Eulerian, the degree of
$v$ in $(V_f,E_f)$ is even. Due to the bijection between $E(G)$ and $E(H)$,
this degree is the same as the degree of $f$ in $H_v$.
This completes the proof.
\end{proof}

\section{Examples of realizable sequence pairs}

Given two integer sequences $\bfd=(d_1,\ldots,d_n)\in\ZZ_+^n$ and
$\bft=(t_1,\ldots,t_m)\in\ZZ_+^m$, we say that the pair $(\bfd;\bft)$ is
\emph{realizable} if it is the degree sequence of a geographic plan.
We now apply Theorem~\ref{thm:geographic-degree-sequences} to
construct infinite families of realizable sequence pairs.

\begin{proposition}\label{prop:nn532n-4}
Let $n\geq  4$ be an integer and consider the sequences $\bfd=(n,n)\in \ZZ_+^{2}$ and
$\bft=(5,3,2^{n-4})\in \ZZ_+^{n-2}$. Then the pair $(\bfd;\bft)$ is realizable.
(Note that $\chi(\bfd;\bft)=0$.)
\end{proposition}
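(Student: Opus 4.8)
The plan is to apply the equivalence $c)\Rightarrow a)$ of Theorem~\ref{thm:geographic-degree-sequences}: I will exhibit a connected realization $G$ of $\bfd=(n,n)$ together with a locally connected Eulerian $\bft$-partition of the double graph $\GG$. The natural realization of $(n,n)$ lives on two vertices $u,v$, and the first observation is that $G$ cannot be loopless: if $G$ consisted only of parallel $u$--$v$ edges, then every colour class of $\GG$ would be a set of parallel $u$--$v$ edges, hence Eulerian only when it has \emph{even} size, ruling out the odd target sizes $5$ and $3$. I would therefore take $G$ to have $n-2$ parallel edges $e_1,\dots,e_{n-2}$ between $u$ and $v$ together with one loop $\lambda_u$ at $u$ and one loop $\lambda_v$ at $v$; this graph is connected and has degree sequence $(n,n)$, and $\GG$ has exactly $2n$ edges, namely the copies $e_i',e_i''$ and the four loop copies $\lambda_u',\lambda_u'',\lambda_v',\lambda_v''$.

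Next I would colour the $2n$ edges of $\GG$ with the $n-2$ colours $A$ (size $5$), $B$ (size $3$), and $C_1,\dots,C_{n-4}$ (each of size $2$). The key idea for local connectedness is to use the parallel edges to thread the colours into a connected graph on the colour set: both copies of a parallel edge are incident to $u$ and to $v$, so every parallel edge with differently coloured copies contributes the \emph{same} edge to both vertex graphs $H_u^{\mathcal E}$ and $H_v^{\mathcal E}$. Concretely, for $n\ge 5$ I would set $c(e_1')=A$, $c(e_1'')=C_1$, then $c(e_i')=C_{i-1}$ and $c(e_i'')=C_i$ for $2\le i\le n-4$, and finally $c(e_{n-3}')=C_{n-4}$, $c(e_{n-3}'')=B$, so that the copies of $e_1,\dots,e_{n-3}$ trace out a spanning path $A\,C_1\,C_2\cdots C_{n-4}\,B$ in each vertex graph; each $C_i=\{e_i'',e_{i+1}'\}$ is then already full. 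It remains to place $e_{n-2}',e_{n-2}'',\lambda_u',\lambda_u'',\lambda_v',\lambda_v''$ so as to complete $A$ to size $5$ and $B$ to size $3$ while keeping both Eulerian. Putting $e_{n-2}',\lambda_u',\lambda_u'',\lambda_v'$ into $A$ and $e_{n-2}'',\lambda_v''$ into $B$ works: the two degrees of $A$ become $6,4$ and those of $B$ become $2,4$, all even, and both classes stay connected through their $u$--$v$ edges. The case $n=4$, where no colour $C_i$ occurs and the path degenerates, I would handle by the obvious variant colouring the two copies of each of $e_1,e_2$ by $A$ and $B$.

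Finally I would verify the two conditions of statement $c)$. Each colour class is Eulerian by the parity and connectivity checks above, the classes $C_i$ being simply pairs of parallel edges. For local connectedness, every colour occurs at both $u$ and $v$ (each class contains a $u$--$v$ copy), and the parallel edges contribute to each vertex graph the spanning path $A\,C_1\cdots C_{n-4}\,B$ together with the edge $A\,B$ coming from $e_{n-2}$, so both $H_u^{\mathcal E}$ and $H_v^{\mathcal E}$ are connected; the loop $\lambda_v$ adds one more $A$--$B$ edge to $H_v^{\mathcal E}$ and $\lambda_u$ contributes nothing, neither of which disturbs connectedness. Hence $\GG$ carries a locally connected Eulerian $\bft$-partition, and Theorem~\ref{thm:geographic-degree-sequences} yields that $(\bfd;\bft)$ is realizable.

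I expect the only genuine difficulty to be the local-connectedness requirement. Making the colour classes Eulerian is pure parity bookkeeping, and it is exactly this parity constraint that forces loops into $G$ (to absorb the odd sizes $5$ and $3$). The delicate point is to arrange \emph{simultaneously} that the colours are linked together at \emph{both} vertices while respecting the prescribed class sizes; this is what dictates the ``spanning path of parallel edges plus loops'' design above, and it is where a careless assignment — for instance colouring both copies of each edge monochromatically — would fail.
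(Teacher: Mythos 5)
Your proof is correct. It is, however, the mirror image of the paper's: the paper invokes part (d) of Theorem~\ref{thm:geographic-degree-sequences}, realizing $\bft$ by a graph $H$ on two vertices with a loop at one of them, three parallel edges one of which is subdivided $n-4$ times, and then exhibiting a two-class $(n,n)$-partition of $\HH$; you invoke part (c), realizing $\bfd$ by a two-vertex multigraph with $n-2$ parallel edges and a loop at each vertex, and exhibiting an $(n-2)$-class $(5,3,2^{n-4})$-partition of $\GG$. In fact the two constructions yield isomorphic geographic plans: the graph induced by your partition (a path of length $n-3$ from $A$ to $B$ through the colours $C_i$, two further $A$--$B$ edges, and a loop at $A$) is exactly the paper's realization of $\bft$, and conversely the graph induced by the paper's two-class partition of $\HH$ is exactly your $G$. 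The trade-off is in the bookkeeping: the paper verifies the Eulerian property for only two classes but must check local connectedness at $n-2$ vertices (all but two of which are trivial subdivision vertices), whereas you verify $n-2$ classes (all but two of which are trivial parallel pairs) but check local connectedness at only two vertices, which is where your threading path $A\,C_1\cdots C_{n-4}\,B$ does the real work. Your opening observation that the realization of $(n,n)$ must contain loops in order to absorb the odd class sizes $5$ and $3$ is correct and well placed, and your degenerate case $n=4$ checks out.
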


\begin{proof}
Fix an integer $n\geq 4$ and let $H$ be the graph obtained as follows. Take two vertices $u$ and $v$, put a loop on $u$,
put three edges $a,b,c$ between $u$ and $v$, and subdivide edge $a$ by $n-4$ new vertices.
Clearly, $H$ is a connected realization of $(7,5,2^{2n-6})$.
Consider the sets $E_1,E_2$ of the edges of the double graph $\HH$ defined as follows:
$E_1$ contains one copy of the path of length $n-3$ corresponding to edge $a$, both copies of edge $b$, and one copy of edge $c$;
$E_2$ contains the other copy of the path corresponding to edge $a$, the other copy of edge $c$, and both copies of the loop at $u$.
Then, $\{E_1,E_2\}$ is a locally connected Eulerian $(n,n)$-partition of $\HH$. By Theorem~\ref{thm:geographic-degree-sequences},
the pair $(\bfd;\bft)$ is realizable.
\end{proof}

\begin{sloppypar}
\begin{proposition}\label{prop:nnnn752n-6}
Let $n\geq  3$ be an integer and consider the sequences $\bfd=(n,n,n,n)\in \ZZ_+^{4}$ and
$\bft=(7,5,2^{n-6})\in \ZZ_+^{n-4}$. Then the pair $(\bfd;\bft)$ is realizable.
(Note that $\chi(\bfd;\bft)=0$.)
\end{proposition}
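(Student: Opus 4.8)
The plan is to prove realizability through the equivalence of statements $a)$ and $d)$ in Theorem~\ref{thm:geographic-degree-sequences}: it suffices to produce a connected realization $H$ of $\bft=(7,5,2^{2n-6})$ whose double graph $\HH$ (which will have $4n$ edges) admits a locally connected Eulerian $\bfd$-partition, that is, a partition of $E(\HH)$ into four color classes $E_1,\dots,E_4$, each of size $n$, each inducing an Eulerian (connected, all even degrees) subgraph, and with every vertex graph $H_v^{\mathcal{E}}$ connected. I would carry this out explicitly for $n\ge 4$ and treat $n=3$ as a separate base case.

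First I would build $H$. Take two hubs $u$ and $v$, join them by a path $P$ of length $n-1$ and a path $Q$ of length $n-3$ (these supply the $(n-2)+(n-4)=2n-6$ degree-two vertices), add three parallel edges $b_1,b_2,b_3$ between $u$ and $v$, and attach one loop $\lambda$ at $u$. Counting gives $d_H(u)=1+1+3+2=7$ and $d_H(v)=1+1+3=5$, every other vertex has degree $2$, and $|E(H)|=2n$, so $H$ is a connected realization of $\bft$.

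Next I would color $\HH$. Write $P',P''$ and $Q',Q''$ for the two copies of $P$ and $Q$, and $b_i',b_i''$, $\lambda',\lambda''$ for the copies of the remaining edges; set $E_1=P'\cup\{b_1'\}$, $E_2=P''\cup\{b_2'\}$, $E_3=Q'\cup\{b_1'',b_2'',b_3'\}$, and $E_4=Q''\cup\{b_3'',\lambda',\lambda''\}$. Each class has exactly $n$ edges, and I would check each is Eulerian: $E_1$ and $E_2$ are single $n$-cycles; $E_3$ consists of $u$ and $v$ joined by four parallel connections (one long, three short), hence connected with all degrees even; and $E_4$ is a cycle of length $n-2$ together with two loops at $u$. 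The parity bookkeeping is exactly what forces the unequal path lengths: a class containing one monochromatic path-copy receives an odd degree contribution $(1,1)$ at $(u,v)$, so it must contain an \emph{odd} number of the parity-carrying $b$-edges, while the loop is parity-neutral; since the six copies of $b_1,b_2,b_3$ must split as $1,1,3,1$ across the four classes, one pair of classes absorbs a single $b$-copy (length $n-1$) and the other pair absorbs three (length $n-3$).

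I expect the main obstacle to be local connectivity at the hubs. Because each path is monochromatic, $P$ contributes the same link $\{1,2\}$ and $Q$ the same link $\{3,4\}$ at \emph{both} $u$ and $v$, so a priori the four colors split into $\{1,2\}$ and $\{3,4\}$; only the coloring of $b_1,b_2,b_3$ can bridge them. With the assignment above, the edges incident to $v$ induce the color links $\{1,2\},\{3,4\},\{1,3\},\{2,3\},\{3,4\}$, and those at $u$ induce the same links together with the neutral loop, so each $H_v^{\mathcal{E}}$ is connected; at an interior path vertex connectivity is immediate, since the two copies of each path edge carry distinct colors. Finally, I would note that this construction needs $Q$ to have positive length, so it covers $n\ge 4$, and dispatch $n=3$ (where $\bft=(7,5)$ and $H$ has no degree-two vertex) by the ad hoc realization with two loops $\lambda_1,\lambda_2$ and three parallel edges $b_1,b_2,b_3$ at $u$ and one loop $\mu$ at $v$, partitioning $\HH$ into the size-three Eulerian classes $\{\lambda_1',\lambda_1'',\lambda_2'\}$, $\{\lambda_2'',b_1',b_2'\}$, $\{\mu',b_1'',b_3'\}$, $\{\mu'',b_2'',b_3''\}$, which one checks is locally connected. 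In all cases Theorem~\ref{thm:geographic-degree-sequences} then yields that $(\bfd;\bft)$ is realizable.
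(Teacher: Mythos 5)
Your proof is correct and essentially identical to the paper's: for $n\ge 4$ your graph $H$ (two hubs joined by paths of lengths $n-1$ and $n-3$, three parallel edges, and a loop at the degree-$7$ hub) and your four color classes coincide, up to relabeling, with the paper's construction, and the same is true of your ad hoc $n=3$ case. You also correctly read $\bft$ as $(7,5,2^{2n-6})$, which is what the degree-sum condition forces and what the paper's own proof uses.
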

\end{sloppypar}

\begin{proof}
We first consider the case $n = 3$. Let $H$ be the graph consisting of two vertices $u$ and $v$, having two loops on $u$, one loop on $v$, and three edges $a,b,c$ between $u$ and $v$. This is a connected realization of $(7,5)$. Consider the sets $E_1,E_2,E_3,E_4$ of the edges of the double graph $\HH$ defined as follows: $E_1$ contains both copies of one of the loops at $u$ and one copy of the other one, $E_2$ contains the remaining copy of a loop at $u$ along with $a$ and $b$, $E_3$ contains $b$, $c$, and one copy of the loop at $v$, and $E_4$ contains $a$, $c$, and the other copy of the loop at $v$. Then, $\{E_1,E_2,E_3,E_4\}$ is a locally connected Eulerian $(3,3,3,3)$-partition of $\HH$.
By Theorem~\ref{thm:geographic-degree-sequences}, the pair $(3,3,3,3;7,5)$ is realizable.
	
Now let $n\geq 4$. Let $H$ be the graph obtained as follows. Take two vertices $u$ and $v$, put a loop on $u$, and put five edges $a,b,c,d,e$ between $u$ and $v$. We then subdivide edge $a$ by $n-2$ new vertices and edge $b$ by $n-4$ new vertices. Clearly, $H$ is a connected realization of $(7,5,2^{2n-6})$. Consider the sets $E_1,E_2,E_3,E_4$ of the edges of the double graph $\HH$ defined as follows:
$E_1$ contains one copy of the path of length $n-1$ corresponding to edge $a$ and one copy of edge $c$;
$E_2$ contains the other copy of the path corresponding to edge $a$ and one copy of edge $d$;
$E_3$ contains one copy of the path of length $n-3$ corresponding to edge $b$ and one copy of each edge $c$, $d$, $e$;
$E_4$ contains the other copy of the path corresponding to edge $b$, one copy of edge $e$, and both copies of the loop at $u$.
Then, $\{E_1,E_2,E_3,E_4\}$ is a locally connected Eulerian $(n,n,n,n)$-partition of $\HH$.
Thus, the pair $(\bfd;\bft)$ is realizable by Theorem~\ref{thm:geographic-degree-sequences}.
\end{proof}

\begin{sloppypar}
\begin{proposition}\label{prop:n2kak}
Let $n$, $k$, $a$ be positive integers such that \hbox{$n\geq a\geq 4$} and $a$ is even,
and consider the sequences $\bfd=(n^{2k})\in \ZZ_+^{2k}$ and
\hbox{$\bft=(ak+1,ak-1, 2^{k(n-a)})\in \ZZ_+^{k(n-a)+2}$}.
Then the pair $(\bfd;\bft)$ is realizable.
(Note that $\chi(\bfd;\bft)=2-(a-2)k$ is even and non-positive.)
\end{proposition}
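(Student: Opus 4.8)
The plan is to verify condition d) of Theorem~\ref{thm:geographic-degree-sequences}: I will exhibit a connected realization $H$ of $\bft$ whose double graph $\HH$ carries a locally connected Eulerian $\bfd$-partition, and then invoke the equivalence of a) and d) to conclude that $(\bfd;\bft)$ is realizable. Since $\bfd=(n^{2k})$, the sought partition consists of $2k$ color classes $E_1,\dots,E_{2k}$, each of size exactly $n$, each inducing an Eulerian subgraph of $\HH$, and with $H^{\mathcal{E}}_w$ connected for every vertex $w$ of $H$.

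For the realization I would take two vertices $u,v$, place one loop at $u$, and join $u$ to $v$ by $r=ak-1$ parallel edges, so that $d_H(u)=ak+1$ and $d_H(v)=ak-1$. To produce the $k(n-a)$ degree-$2$ vertices demanded by $\bft$, I subdivide $k$ of these parallel edges, inserting $n-a$ new vertices into each (turning it into a path of length $n-a+1$); I call these the $k$ \emph{long paths}, and I call the remaining $k(a-1)-1$ undivided parallels \emph{short}. One checks immediately that $H$ is connected, has $kn$ edges, and realizes $\bft$, and that every color will appear at both $u$ and $v$ since each class will carry a path endpoint.

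To define the partition I would first split each long path, sending one of its two copies in $\HH$ into $E_{2i-1}$ and the other into $E_{2i}$. This disposes of all subdivision vertices (each lies on a single long path, where it gets even degree in its two colors and a connected two-color $H^{\mathcal{E}}_w$), and it forces the color pairs $\{1,2\},\{3,4\},\dots,\{2k-1,2k\}$ to be adjacent in both $H^{\mathcal{E}}_u$ and $H^{\mathcal{E}}_v$. To restore connectivity of the color graphs I designate $k$ of the short parallels as \emph{linking} edges and split them cyclically: the $i$-th between $E_{2i}$ and $E_{2i+1}$ for $i=1,\dots,k-1$, and the last between $E_{2k}$ and $E_{1}$. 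Together with the matching above, this produces a single cycle through all $2k$ colors in each of $H^{\mathcal{E}}_u$ and $H^{\mathcal{E}}_v$, yielding local connectivity. Every remaining short parallel is a \emph{filler}, with both of its copies placed in one class, and the loop (both copies) is placed in $E_{2k}$.

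The rest is bookkeeping, and this is where the hypotheses enter. Counting sizes, a generic class receives $n-a+1$ path edges, exactly one linking copy, and some fillers, so it has size $n$ precisely when it carries $(a-2)/2$ fillers; the class $E_{2k}$ additionally holds the two loop copies and therefore needs $(a-4)/2$ fillers. The assumption that $a$ is even and $a\ge 4$ is exactly what makes these counts non-negative integers, and a short computation confirms that the required total equals the number $k(a-1)-1$ of short parallels minus the $k$ linking edges. For the Eulerian property, each class contributes at $u$ and at $v$ the degree of one path endpoint plus one linking copy plus an even number of filler copies, hence even degree; the loop contributes $4$ at $u$, and subdivision vertices are even by construction. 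The only genuinely delicate step is this simultaneous reconciliation of the three constraints---exact sizes $n$, even degrees, and a connected color graph at each vertex---since connectivity forces certain edges to be split while the size and parity conditions restrict how the splitting may be done; the cyclic linking scheme together with the parity of $a$ is precisely what makes all three hold at once.
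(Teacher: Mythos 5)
Your proposal is correct and follows essentially the same route as the paper: the same realization $H$ of $\bft$ (two vertices, a loop at $u$, $ak-1$ parallel edges with $k$ of them subdivided), followed by an application of Theorem~\ref{thm:geographic-degree-sequences} via an explicit locally connected Eulerian $(n^{2k})$-partition of $\HH$. Your partition differs from the paper's only in bookkeeping---you split each subdivided path between $E_{2i-1}$ and $E_{2i}$ and use $k$ cyclically split linking edges with the remaining parallels doubled as fillers, whereas the paper links consecutive classes by sharing blocks of $a-1$ parallel edges and treats $k=1$ separately---but both schemes produce the same cyclic color graph at $u$ and $v$, and your size and parity counts check out.
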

\end{sloppypar}

\begin{proof}
Let $H$ be the graph obtained as follows. Take two vertices $u$ and $v$, put a loop $e_1$ on $u$, and put $ak-1$ edges $e_{2},\ldots, e_{ak}$ connecting $u$ and $v$. Let us now subdivide $k$ of these connecting edges, say $e_{(a-1)k+1},\ldots, e_{ak}$, by introducing $n-a$ new vertices on each, and let $P_1,\ldots, P_k$ be the resulting $u,v$-paths. Clearly, $H$ is a connected realization of the degree sequence $\bft$.
To complete the proof, we consider two cases depending on the value of $k$.

Suppose first that $k = 1$. In this case, consider the sets $E_1,E_2$ of the edges of the double graph $\HH$ defined as follows:
$E_1$ contains one copy of path $P_1$, two copies of loop $e_1$, and one copy of each edge $e_2,\ldots, e_{a-2}$;
$E_2$ contains one copy of path $P_1$, one copy of each edge $e_2,\ldots, e_{a-2}$, and two copies of edge $e_{a-1}$.
Note that the degrees of $u$ and $v$ in $(V(H),E_1)$ equal $a+2$ and $a-2$, respectively, while both vertices have degree
$a$ in $(V(H),E_2)$. It follows that $\{E_1,E_2\}$ is a locally connected Eulerian $(n,n)$-partition of $\HH$.

Suppose now that $k>1$. In this case, consider the sets $E_1,\ldots, E_{2k}$ of the edges of the double graph $\HH$ defined as follows:
$E_1$ contains one copy of path $P_1$, both copies of loop $e_1$ and one copy of each of the edges $e_3,\ldots, e_{a-1}$;
$E_2$ contains one copy of path $P_2$, both copies of edge $e_2$ and one copy of each of the edges $e_3,\ldots, e_{a-1}$;
for all $i\in \{2,\ldots, k\}$, set $E_{2i-1}$ contains one copy of path $P_i$ and
one copy of each of the edges $e_{(i-1)(a-1)+1},\ldots, e_{i(a-1)}$;
for all $i\in \{2,\ldots, k-1\}$, set $E_{2i}$ contains one copy of path $P_{i+1}$ and
one copy of each of the edges $e_{(i-1)(a-1)+1},\ldots, e_{i(a-1)}$;
set $E_{2k}$ contains one copy of path $P_{1}$ and
one copy of each of the edges $e_{(k-1)(a-1)+1},\ldots, e_{k(a-1)}$.
See Table~\ref{table-multiplicities} for an example with $k = 3$; the table contains
numbers of copies of $e_j$, resp., of $P_j$, in each set $E_i$.

\begin{table}[h!]
     \centering
     {
         \footnotesize
         \renewcommand{\arraystretch}{1}
         \setlength\tabcolsep{4pt}
         \begin{tabular}{|c||c|c|c||c|c|c|c|c||c|c|c||c|c|c|}
         \hline
         & {$P_1$} & {$P_2$} & {$P_3$} & {$e_1$} & {$e_2$} & {$e_3$} & {$\ldots$} & {$e_{a-1}$} & {$e_{a}$} & {$\ldots$} & {$e_{2(a-1)}$} & {$e_{2(a-1)+1}$} & {$\ldots$} & {$e_{3(a-1)}$}\\
         \hline
         {$E_1$} & {$1$} & & & {$2$} & & {$1$} & { $\ldots$} & {$1$} & & &  &  & & \\
         \hline
         {$E_2$} & & {$1$} & &  & {$2$} & {$1$} & { $\ldots$} & {$1$} & & &  &  & & \\
         \hline
         {$E_3$} & & {$1$} & &  & &  & & & {$1$} & { $\ldots$} &  {$1$} &  & & \\
         \hline
         {$E_4$} & &  & {$1$} &  & &  & & & {$1$} & { $\ldots$} &  {$1$} &  & & \\
         \hline
         {$E_5$} & & &{$1$} &  &  & & & &  &  & & {$1$} & { $\ldots$} & {$1$} \\
         \hline
         {$E_6$} & {$1$} & &&  &  & & & &  &  & & {$1$} & { $\ldots$} & {$1$} \\
         \hline
     \end{tabular}
     }
     \caption{Numbers specifying the construction of a locally connected Eulerian $(n^{2k})$-partition of $\HH$ in the proof of Proposition~\ref{prop:n2kak}, case $k>1$. Empty cells correspond to zero entries.}\label{table-multiplicities}
\end{table}

It can be verified that $\{E_1,\ldots, E_{2k}\}$ is a locally connected Eulerian $(n^{2k})$-partition of $\HH$.
Thus, the pair $(\bfd;\bft)$ is realizable by Theorem~\ref{thm:geographic-degree-sequences}.
\end{proof}

\section{Some infinite families of sequence pairs not realizable on the sphere}
\label{sec:non-realizable-on-sphere}

Consider a realizable pair $(\bfd;\bft)$. Then $\sum_{i = 1}^nd_i = \sum_{j = 1}^mt_j$ and the Euler characteristic of all surfaces of the maps generating a plan with degree sequence $(\bfd;\bft)$ is unambiguously determined by the degree sequence, according to equality $\chi=n-\ell+m$, where $2\ell = \sum_{i = 1}^nd_i = \sum_{j = 1}^mt_j$.
Therefore, from now on we will only consider pairs  $(\bfd;\bft)$ such that $\sum_{i = 1}^nd_i = \sum_{j = 1}^mt_j = 2\ell$ for some
integer $\ell$, and denote the value of $n-\ell+m$ by $\chi(\bfd;\bft)$.

\begin{lemma}\label{lem:connected}
Let $G$ be a connected graph and let $\mathcal{E} = \{E_1,\ldots, E_m\}$
be a locally connected partition of the double graph $\GG$. Then, there
is no vertex $v\in V(G)$ and a partition $\{F_1,F_2\}$ of the set of
edges of $G$ incident to $v$ such that $F_1,F_2\neq\emptyset$, both
copies of all edges in $F_1$ belong to $E_j$, and
no copy of any edge in $F_2$ belongs to $E_j$ for some $j\in \{1,\ldots,
m\}$.
\end{lemma}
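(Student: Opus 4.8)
The plan is to argue by contradiction directly from the definition of local connectedness, working entirely inside the auxiliary graph $H_v^{\mathcal{E}}$ attached to the supposed offending vertex. Suppose, for contradiction, that there exist a vertex $v\in V(G)$, a color $j\in\{1,\ldots,m\}$, and a partition $\{F_1,F_2\}$ of the set of edges of $G$ incident to $v$, with $F_1,F_2\neq\emptyset$, such that both copies of every edge in $F_1$ lie in $E_j$, while no copy of any edge in $F_2$ lies in $E_j$. I will derive a contradiction by showing that $j$ is an isolated vertex of $H_v^{\mathcal{E}}$ although $H_v^{\mathcal{E}}$ has at least two vertices, so that $H_v^{\mathcal{E}}$ is disconnected.

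First I would record that $j\in V(H_v^{\mathcal{E}})$: since $F_1\neq\emptyset$, pick any $e\in F_1$; both of its copies in $\GG$ lie in $E_j$ and are incident with $v$, so $j$ is a color appearing at $v$. Next I would exhibit a second vertex of $H_v^{\mathcal{E}}$ distinct from $j$: since $F_2\neq\emptyset$, pick any $e\in F_2$; each of its two copies is assigned some color by the partition, and by hypothesis neither copy is colored $j$, so the color $k$ of one of them satisfies $k\neq j$ and appears at $v$, whence $k\in V(H_v^{\mathcal{E}})\setminus\{j\}$. Thus $|V(H_v^{\mathcal{E}})|\geq 2$.

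The key step is to verify that $j$ is isolated in $H_v^{\mathcal{E}}$, i.e.\ that no edge of $H_v^{\mathcal{E}}$ is incident with $j$. By the definition of $H_v^{\mathcal{E}}$, such an edge would be a pair $\{j,k\}$ with $k\neq j$ arising from an edge $e$ of $G$ incident with $v$ whose two copies $e',e''$ satisfy $e'\in E_j$ and $e''\in E_k$. Every edge incident with $v$ lies in $F_1$ or in $F_2$. If $e\in F_1$, then both copies lie in $E_j$, so the only color realized by $e$ is $j$ itself and no pair $\{j,k\}$ with $k\neq j$ is produced. If $e\in F_2$, then no copy of $e$ lies in $E_j$, so again no pair containing $j$ is produced. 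Hence no edge of $H_v^{\mathcal{E}}$ meets $j$, so $j$ is isolated. Combined with $|V(H_v^{\mathcal{E}})|\geq 2$, this makes $H_v^{\mathcal{E}}$ disconnected, contradicting the assumption that $\mathcal{E}$ is locally connected.

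I expect this to constitute essentially the entire difficulty: the statement is an unwinding of the definitions of $H_v^{\mathcal{E}}$ and of local connectedness, and the only point demanding care is the role of the constraint $k\neq j$ in the edge set of $H_v^{\mathcal{E}}$. It is precisely this constraint that prevents the edges of $F_1$, both of whose copies are colored $j$, from attaching the vertex $j$ to the rest of $H_v^{\mathcal{E}}$, thereby forcing $j$ to be isolated. No computation is involved; the argument is purely combinatorial.
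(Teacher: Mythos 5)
Your proof is correct and follows exactly the paper's own (one-line) argument: the hypotheses force the color $j$ to be an isolated vertex of $H_v^{\mathcal{E}}$, which has at least one other vertex since $F_2\neq\emptyset$, contradicting local connectedness. You have simply spelled out the details that the paper leaves implicit.
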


\begin{proof}
Otherwise, vertex $j$ would be an isolated vertex in the graph
$H_v^{\mathcal{E}}$,
contrary to the local connectivity of the partition.
\end{proof}

\begin{proposition}\label{thm:chi=2-example-1}
Let $a>b\geq 1$ be integers such that $a+b$ is even, let $n = (a+b)/2$, and
consider the sequences $\bfd=(2,\ldots, 2)\in \ZZ_+^{n}$ and
$\bft=(a,b)\in \ZZ_+^{2}$.
Then the pair $(\bfd;\bft)$ is not realizable.
(Note that
$\chi(\bfd;\bft)=2$.)
\end{proposition}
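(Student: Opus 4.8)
The plan is to apply the characterization of realizability from Theorem~\ref{thm:geographic-degree-sequences}: the pair $(\bfd;\bft)$ is realizable if and only if some connected realization $G$ of $\bfd$ admits a locally connected Eulerian $\bft$-partition $\{E_1,E_2\}$ of its double graph $\GG$. Since $\bfd=(2,\ldots,2)$, every connected realization $G$ has all vertices of degree $2$, so $G$ is a cycle on the $n$ vertices (a multigraph $2$-cycle when $n=2$). In particular $G$ has exactly $n$ edges, $\GG$ has $2n=a+b$ edges, and we must have $|E_1|=a$, $|E_2|=b$. I would argue by contradiction: assume such a partition exists, color the copies lying in $E_1$ with color $1$ and those in $E_2$ with color $2$, and derive a contradiction from the combined constraints.

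The key observation is to classify each edge $e$ of the cycle according to the colors of its two copies $e',e''$ in $\GG$: call $e$ \emph{monochromatic} if $e'$ and $e''$ receive the same color, and \emph{mixed} otherwise. At any vertex $v$, which is incident in $G$ to exactly two edges, the number of color-$1$ copies at $v$ is the sum of the contributions of these two edges, where a monochromatic edge contributes an even number ($0$ or $2$) and a mixed edge contributes $1$. Since each color class is Eulerian, all its degrees are even, forcing this count to be even at every vertex; hence the two edges meeting at each vertex are either both monochromatic or both mixed. As $G$ is a connected cycle, this parity type propagates all the way around, so either all edges of $G$ are mixed, or all are monochromatic.

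Both alternatives lead to a contradiction. If every edge is mixed, each of the $n$ edges contributes exactly one copy to each color class, so $|E_1|=|E_2|=n$ and $a=b$, contradicting $a>b$. If every edge is monochromatic, I invoke local connectivity through Lemma~\ref{lem:connected}: a vertex incident to one color-$1$ edge and one color-$2$ edge (both monochromatic) realizes the forbidden configuration, taking $F_1$ to be the color-$1$ edge and $F_2$ the color-$2$ edge, so its vertex graph would be disconnected. Thus all monochromatic edges share a single color, forcing $\{|E_1|,|E_2|\}=\{2n,0\}$, i.e.\ one of $a,b$ equals $0$; this contradicts $a>b\geq 1$. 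In either case no valid partition exists, so $(\bfd;\bft)$ is not realizable.

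I expect the only real obstacle to be the parity-propagation step of the second paragraph, where the even-degree consequence of the Eulerian condition must be translated precisely into the monochromatic-versus-mixed dichotomy and shown to be constant around the cycle; once that is established, the remainder is a two-line case split resting on Lemma~\ref{lem:connected}. A minor point worth a sentence is the degenerate case $n=2$, where $G$ is a multigraph $2$-cycle, but the same argument applies verbatim since each vertex is still incident to exactly the two cycle edges.
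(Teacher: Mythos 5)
Your proof is correct and follows essentially the same route as the paper's: reduce via Theorem~\ref{thm:geographic-degree-sequences} to showing the double of the $n$-cycle has no locally connected Eulerian $(a,b)$-partition, then combine the even-degree (Eulerian) condition with Lemma~\ref{lem:connected} to force each color class to contain exactly one copy of every edge, whence $a=b$, a contradiction. Your monochromatic/mixed dichotomy is just a more explicit write-up of the paper's one-line claim that each color class must contain at least one copy of each edge.
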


\begin{proof}
Consider an arbitrary connected realization $G$ of the degree sequence
$\bfd$. This is a connected $2$-regular graph on $n$ vertices, that is,
an $n$-cycle. By Theorem~\ref{thm:geographic-degree-sequences}, it is
enough to show that
the double graph $\GG$ has no locally connected Eulerian $\bft$-partition.
Assume indirectly that there exists a locally connected Eulerian
partition $\mathcal{E} = \{E_1,E_2\}$ of $\GG$ such that $|E_1| = a$ and
$|E_2| = b$.
Since $a+b = 2n$ and $a>b$, we have $b<n$.
Lemma~\ref{lem:connected}
implies that each color class contains at least one copy of each edge of
$G$.
Since $b<n$, this is impossible.
\end{proof}

\begin{proposition}\label{thm:chi=2-example-2}
Let $n\geq 2$ be an integer, and consider the sequences
$\bfd=(3,2,\ldots, 2,1)\in \ZZ_+^{n}$ and
$\bft=(n,n)\in \ZZ_+^{2}$.
Then the pair $(\bfd;\bft)$ is not realizable.
(Note that
$\chi(\bfd;\bft)=2$.)
\end{proposition}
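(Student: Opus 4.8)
The plan is to apply Theorem~\ref{thm:geographic-degree-sequences}: the pair $(\bfd;\bft)$ is realizable if and only if some connected realization $G$ of $\bfd$ admits a locally connected Eulerian $\bft$-partition $\mathcal{E}=\{E_1,E_2\}$ of its double graph $\GG$ with $|E_1|=|E_2|=n$. So I would first describe all connected realizations of $\bfd=(3,2,\ldots,2,1)$. Such a $G$ has one vertex $w$ of degree $3$, one leaf $u$, and all remaining vertices of degree $2$. The unique walk leaving $u$ and passing through degree-$2$ vertices until it first meets $w$ is a simple \emph{tail} path from $u$ to $w$; deleting its interior leaves a connected $2$-regular graph containing $w$, hence a single \emph{cycle} through $w$ (possibly degenerating to a double edge, or to a loop when the two non-tail edges of $w$ coincide). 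Writing $\ell_t\geq 1$ and $\ell_c\geq 1$ for the numbers of edges on the tail and on the cycle, we have $\ell_t+\ell_c=n$.

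Next I would extract local colouring constraints from the two conditions defining $\mathcal{E}$. Since each $(V,E_i)$ is Eulerian, by Theorem~\ref{thm:Eulerian} every vertex has even degree in each colour class. At the leaf $u$ this forces both copies of its incident edge into the same class, say $E_1$. At a degree-$2$ vertex $x$ with incident edges $e,e'$, the even-degree condition together with local connectivity (an isolated colour in $H_x^{\mathcal{E}}$ is forbidden, in the spirit of Lemma~\ref{lem:connected}) leaves exactly two admissible patterns: either all four edge-copies lie in one class (both $e,e'$ \emph{monochromatic} of the same colour), or each of $e,e'$ is \emph{split}, contributing one copy to each class. Consequently the monochromatic/split type propagates along any path of degree-$2$ vertices. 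Propagating from $u$ along the tail then shows that every tail edge is monochromatic of colour $1$; in particular the tail edge incident with $w$ lies wholly in $E_1$.

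The crux is the analysis at $w$. With the tail edge at $w$ monochromatic in colour $1$, the even-degree condition forces the two remaining edge-slots at $w$ to contribute equally-even numbers of colour-$1$ copies, i.e.\ to be both monochromatic or both split; and if either is monochromatic while colour $2$ is present, then $H_w^{\mathcal{E}}$ acquires an isolated vertex and is disconnected. Hence, unless the whole graph is colour $1$ (which makes $E_2$ empty, impossible as $n\geq 2$), both non-tail edges at $w$ must be split. Propagating the split type around the cycle (the case of cycle length $\geq 2$) forces every cycle edge to be split, so $E_2$ receives exactly one copy of each cycle edge and $|E_2|=\ell_c=n-\ell_t\leq n-1<n$. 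In the degenerate loop case the same local analysis forces the loop's two copies to be split between the classes, giving $|E_2|=1<n$. Either way $|E_2|<n$, contradicting $|E_2|=n$, so no admissible $\mathcal{E}$ exists and $(\bfd;\bft)$ is not realizable.

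I expect the main obstacle to be exactly this step at $w$, precisely because a naive global count is \emph{not} enough: once the tail is known to be monochromatic, the edge totals $|E_1|=|E_2|=n$ are mutually consistent with the $2\ell_c$ available cycle-copies, so the contradiction can only come from the finer local structure. The decisive point is to use local connectivity at $w$ to exclude the ``both non-tail edges monochromatic'' configurations, thereby forcing the split pattern that then propagates around the entire cycle. By contrast, the tadpole classification and the degree-$2$ dichotomy are routine; the loop degeneration requires only minor care, since a loop's two copies in $\GG$ each contribute degree $2$ and must be accounted for separately in the parity bookkeeping at $w$.
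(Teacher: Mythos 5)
Your proof is correct and follows essentially the same route as the paper's: classify the connected realizations as a path-plus-cycle ``tadpole'', propagate the forced monochromatic colouring from the pendant vertex along the path using the Eulerian parity condition together with local connectivity (Lemma~\ref{lem:connected}), handle the degree-$3$ vertex and the degenerate loop case by the same local-connectivity argument, and conclude with a cardinality contradiction. The only cosmetic difference is that the paper bounds the class containing the pendant edge from below ($|E_1|\geq |E(C)|+2|E(P)|>n$), whereas you pin down the other class exactly and bound it from above; the two counts are equivalent since $|E_1|+|E_2|=2n$.
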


\begin{proof}
The arguments are similar as in the proof of Proposition~\ref{thm:chi=2-example-1}.
Consider an arbitrary connected realization $G$ of the degree sequence
$\bfd$.
This is a connected graph on $n$ vertices, in which vertex $v_1$ is of
degree $3$, vertex
$v_n$ is of degree $1$, and all other vertices are of degree $2$.
Thus such a graph consists of a path $P$ from $v_1$ to $v_n$ and a cycle
$C$ through $v_1$ such that $v_1$ is the only common vertex of $P$ and $C$.
Assume indirectly that there exists a locally connected Eulerian
partition $\mathcal{E} = \{E_1,E_2\}$ of $\GG$ such that $|E_1| = |E_2|
= n$.
We can assume that $v_n$ is a vertex of the Eulerian subgraph of $\GG$
corresponding to $E_1$. By Lemma~\ref{lem:connected}, $E_1$
must contain both copies of each edge of $P$, and at least one copy of
each edge on $C$. Since $G$ has exactly $n$ edges, the above implies
that $|E_1|\geq  |E(C)|+2|E(P)|  = n+|E(P)| > n$, a contradiction.
\end{proof}

\begin{proposition}\label{thm:chi=2-example-4}
Let $a,b\geq 2$ be integers and let $t_1\geq \ldots\geq t_a$ be positive
even integers such that $\sum_{i = 1}^{a}t_i = 2(a+b)$. Consider the
sequences $\bfd=(2a+b-1,1,\ldots,1)\in \ZZ_+^{b+2}$ and
$\bft=(t_1,\ldots, t_a)\in \ZZ_+^{a}$.
Then the pair $(\bfd;\bft)$ is not realizable.
(Note that
$\chi(\bfd;\bft)=2$.)
\end{proposition}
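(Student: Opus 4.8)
The plan is to apply Theorem~\ref{thm:geographic-degree-sequences}: it suffices to show that \emph{no} connected realization $G$ of $\bfd$ admits a locally connected Eulerian $\bft$-partition of its double graph $\GG$. First I would pin down the realizations of $\bfd$. Since $a,b\geq 2$, the entry $2a+b-1\geq 5$ is the unique entry exceeding $1$, so every realization $G$ has a single vertex $w$ of degree $2a+b-1$ and $b+1$ leaves. In a connected graph a leaf cannot be adjacent to another leaf (that would create an isolated $K_2$ component, and here $b+2\geq 4$), so each leaf is joined to $w$ by a single edge; these account for $b+1$ of the incidences at $w$, and the remaining $2a+b-1-(b+1)=2(a-1)$ incidences must be supplied by $a-1$ loops at $w$. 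Hence $G$ is unique up to isomorphism: a vertex $w$ carrying $a-1$ loops and $b+1$ pendant edges, with $\ell=a+b$ edges, as required.

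Next I would fix a hypothetical locally connected Eulerian $\bft$-partition $\mathcal{E}=\{E_1,\ldots,E_a\}$ of $\GG$ and record two structural facts. In $\GG$ each leaf has degree $2$ (two parallel copies of its pendant edge), so the Eulerian condition, which forces even degrees in every color class, forces both copies of every pendant edge to lie in the same color class. Consequently, among the edges of $G$ incident with $w$, only the $a-1$ loops can have their two copies split between distinct colors. Turning to the vertex graph $H_w^{\mathcal{E}}$: since every $t_j>0$ and every edge of $\GG$ is incident with $w$, its vertex set is all of $\{1,\ldots,a\}$, while its edges arise \emph{solely} from loops whose two copies receive different colors.

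The crux is then a counting-plus-parity argument. Local connectivity requires $H_w^{\mathcal{E}}$ to be connected on $a$ vertices, hence to have at least $a-1$ edges; but there are only $a-1$ loops, each contributing at most one edge, and $H_w^{\mathcal{E}}$ is simple. So every loop must be split, the resulting color pairs must be distinct, and together they must form a spanning tree $T$ on $\{1,\ldots,a\}$. Writing $p_j$ for the number of pendant edges colored $j$ and $q_j$ for the number of loop-copies in $E_j$, we get $t_j=2p_j+q_j$; moreover each split loop places exactly one copy into each of its two endpoint colors, so $q_j=d_T(j)$. Since every $t_j$ is even and $2p_j$ is even, each $q_j=d_T(j)$ must be even. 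But a tree on $a\geq 2$ vertices has a leaf, a vertex of degree $1$, so some $q_j$ is odd --- a contradiction. Therefore no such partition exists, and $(\bfd;\bft)$ is not realizable.

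I expect the main obstacle to be recognizing that the evenness of the entries of $\bft$ collides with the tree structure forced by local connectivity. Once one sees that local connectivity compresses the entire connectivity burden at $w$ onto the $a-1$ loops (because every pendant edge is monochromatic), the spanning-tree consequence and the leaf-parity contradiction follow cleanly; the bookkeeping $t_j=2p_j+q_j$ together with the identification $q_j=d_T(j)$ is the delicate step, and it is exactly there that the evenness hypothesis on $\bft$ enters. As an alternative route to the loop-splitting conclusion one could instead invoke Lemma~\ref{lem:connected} at the vertex $w$.
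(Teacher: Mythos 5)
Your proof is correct, and its opening moves coincide with the paper's: you reduce via Theorem~\ref{thm:geographic-degree-sequences}, identify the unique realization of $\bfd$ (one vertex $w$ carrying $a-1$ loops and $b+1$ pendant edges), and observe that the Eulerian condition at each leaf forces both copies of every pendant edge into a single color class, so the entire connectivity burden at $w$ falls on the $a-1$ loops. The two arguments part ways at the final contradiction. The paper applies Lemma~\ref{lem:connected} only pointwise: each class $E_j$ must contain at least one loop copy; since $t_j$ is even and pendant edges contribute copies in pairs, each class must in fact contain at least two loop copies; and then the $2a$ required loop copies exceed the $2a-2$ available in $\GG$. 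You instead use local connectivity globally: the split loops must induce a connected simple graph on all $a$ colors, which with only $a-1$ loops forces a spanning tree $T$ with $q_j=d_T(j)$ loop copies in class $j$; evenness of $t_j$ then forces every tree degree to be even, contradicting the existence of a leaf of $T$. Both routes rest on the same three ingredients (monochromatic pendant edges, parity of the $t_j$, and the count of $a-1$ loops), so the difference is one of packaging rather than substance: the paper's count is shorter, while your spanning-tree argument is more structural and makes explicit exactly how the connectivity of $H_w^{\mathcal{E}}$ is obstructed.
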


\begin{proof}
Consider an arbitrary connected realization $G$ of the degree sequence
$\bfd$.
This is a graph consisting of a vertex $v_1$ of degree $2a+b-1$ and
$b+1$ vertices of degree~$1$, all adjacent to $v_1$. Thus, there are
$a-1$ loops at $v_1$.
Assume indirectly that there exists a locally connected Eulerian
partition $\mathcal{E} = \{E_1,\ldots,E_a\}$ of $\GG$ such that
$|E_j| = t_j$ for all $j\in \{1,\ldots, a\}$. If for some $j\in
\{1,\ldots,m\}$ a copy of a pendant edge belongs to $E_j$, then both
copies must belong to it.
Thus, by Lemma~\ref{lem:connected}, every $E_j$ must contain at
least one of the loops. Since the sizes $t_j$ are all even, every $E_j$
must contain at least two loops in the double graph $\GG$. This requires
at least $2a$ loops in
$\GG$. However, since $\GG$ has only $2a-2$ loops, this is impossible.
\end{proof}

\begin{proposition}\label{thm:chi=2-example-3}
Let $a\geq 0$ and $b\geq 0$ be integers such that $b < a+3 \leq 2b$, and
consider the sequences $\bfd=(3,3,2,\ldots,2)\in \ZZ_+^{a+2}$ and
$\bft=(a+3,b, a+3-b)\in \ZZ_+^{3}$.
Then the pair $(\bfd;\bft)$ is not realizable.
(Note that
$\chi(\bfd;\bft)=2$.)
\end{proposition}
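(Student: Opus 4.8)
The plan is to invoke Theorem~\ref{thm:geographic-degree-sequences}: the pair $(\bfd;\bft)$ is realizable if and only if some connected realization $G$ of $\bfd$ admits a locally connected Eulerian $\bft$-partition of its double graph $\GG$. So I would assume such a partition $\mathcal{E}=\{E_1,E_2,E_3\}$ exists, with $|E_1|=a+3$, $|E_2|=b$, $|E_3|=a+3-b$, and derive a contradiction. First I would record the numerology: $\ell=a+3$ is exactly the number of edges of $G$ and is the largest of the three class sizes, and all three sizes are positive (since $a+3\le 2b$ forces $b\ge 2$, while $b<a+3$ gives $a+3-b\ge 1$). A connected realization of $\bfd=(3,3,2,\ldots,2)$ has exactly two vertices $x,y$ of degree $3$ and $a$ vertices of degree $2$; suppressing the degree-$2$ vertices shows that, up to subdivision, $G$ is either a \emph{theta graph} (three internally disjoint $x$--$y$ paths) or a \emph{dumbbell} (a cycle through $x$, a cycle through $y$, and an $x$--$y$ path). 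Both have $\ell=a+3$ edges, and I would treat them separately.

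The technical engine I would set up first is a local propagation statement. At a degree-$2$ vertex $w$ with incident edges $e,f$, the four copies $e',e'',f',f''$ in $\GG$ carry colors whose multiplicities must all be even (Eulerian) and whose local graph $H_w^{\mathcal{E}}$ must be connected. A short check over the even color-multisets $(c,c,c,c)$ and $(c,c,d,d)$ shows that only two patterns survive: either $e$ and $f$ are both \emph{monochromatic} with one common color, or both are \emph{bichromatic} with one common color pair (the split $e=(c,c),f=(d,d)$ with $c\ne d$ is precisely what Lemma~\ref{lem:connected} forbids). Hence the ``color type'' is constant along every maximal path of degree-$2$ vertices, so each branch of $G$ is uniformly of type $M(\alpha)$ (all copies color $\alpha$) or $B(\{\alpha,\beta\})$ (each edge split between $\alpha$ and $\beta$). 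I expect this reduction to the finitely many branches to be the crux; what follows is bookkeeping.

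For the theta graph the three branches meet at $x$ (and at $y$). An $M$-branch contributes an even amount to a single color-degree at $x$, while a $B$-branch contributes $1$ to each of two color-degrees; a parity argument (each $B$-branch flips the parity of two colors, each $M$-branch flips none) forces the multiset of $B$-branch pairs to cancel modulo $2$. The viable possibilities reduce to: all three branches $M$ (then local connectivity at $x$ collapses all colors to one, giving a class of size $2\ell$, impossible); two $B$-branches sharing a pair plus one $M$-branch; or three $B$-branches carrying the distinct pairs $\{1,2\},\{1,3\},\{2,3\}$. In the second case local connectivity at $x$ forces the $M$-color into the shared pair, so only two colors occur and one prescribed size is $0$, a contradiction. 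In the third case each color lies in exactly two branches, so $|E_1|=\ell-(\text{length of the branch avoiding color }1)$; as every branch has length $\ge 1$ this gives $|E_1|<\ell=a+3$, contradicting $|E_1|=a+3$.

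For the dumbbell I would first show the connecting path $P$ must be of type $M$: at $x$ the cycle $C_x$ contributes an even amount to every color-degree, so a type-$B$ path $P$ would make some color-degree at $x$ odd, violating the Eulerian condition (and symmetrically at $y$). Writing $P=M(\mu)$ and applying local connectivity at $x$ and $y$, the only configuration realizing all three colors is $C_x=B(\{\mu,\nu\})$ and $C_y=B(\{\mu,\rho\})$ with $\{\mu,\nu,\rho\}=\{1,2,3\}$. Counting copies of color $\mu$ then yields $|E_\mu|=2\pi+\gamma_x+\gamma_y=\pi+\ell$, where $\pi\ge 1,\gamma_x,\gamma_y\ge 1$ are the lengths of $P,C_x,C_y$; hence $|E_\mu|>\ell=a+3$, exceeding the largest prescribed class size, which is impossible. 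Since both realization types lead to a contradiction, no locally connected Eulerian $\bft$-partition exists and $(\bfd;\bft)$ is not realizable. The main obstacle, and the step to write with care, is the dumbbell analysis — getting the parity argument for $P$ and the ensuing size count exactly right, particularly making the branch bookkeeping uniform across the degenerate case where a cycle is a single (possibly doubled) loop.
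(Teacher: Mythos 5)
Your proposal is correct and takes essentially the same route as the paper's proof: the same case split into the two realization types (three internally disjoint $v_1,v_2$-paths versus a path joining two cycles), the same key observation that the color multiplicities are constant along each branch (your $M$/$B$ branch types are exactly the paper's multiplicity-constancy lemmas), and the same finishing arguments combining Eulerian parity and local connectivity at the degree-$3$ vertices with a size count against $\ell=a+3$. The only cosmetic difference is that in the theta case you conclude directly from $|E_1|=\ell-q_k<\ell$ instead of solving the paper's $3\times 3$ linear system for the path lengths.
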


\begin{proof}
Consider an arbitrary connected realization $G = (V,E)$ of the degree sequence
$\bfd$. This is a graph on $a+2$ vertices, where the first two vertices, $v_1$ and $v_2$, have degrees $3$, and all other vertices are of degree $2$.
Thus such a graph consists either of
one path between $v_1$ and $v_2$, and one cycle at each of these vertices,
where the path and the two cycles are otherwise vertex disjoint
(see the left-hand side of Fig.~\ref{fig:graph-5-4} for an example) or of
three internally vertex-disjoint paths between $v_1$ and $v_2$
(see the right-hand side of Fig.~\ref{fig:graph-5-4} for an example).

\medskip
\begin{figure}[h!]
	\begin{center}
		\includegraphics[width=0.9\textwidth]{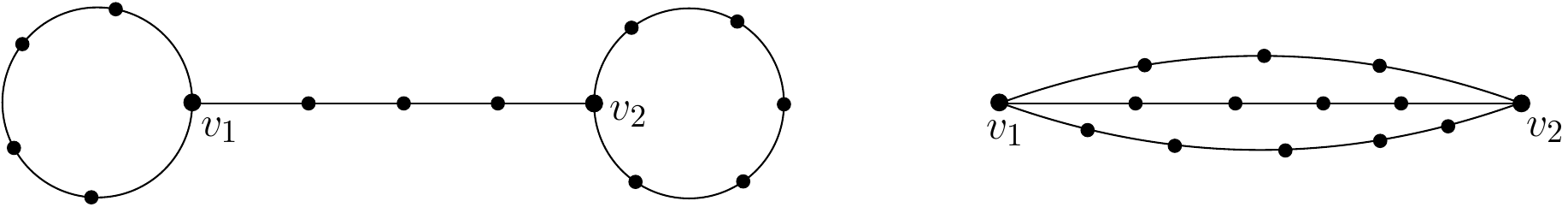}
	\end{center}
	\caption{The two types of connected realizations of the degree sequence $\bfd$.
		In the two examples, we have $a = 12$. The value of $b$ can be any integer in the set $\{8,\ldots, 14\}$.
	}\label{fig:graph-5-4}
\end{figure}

Let $\mathcal{W}$ denote the set consisting of the edge sets of the $v_1,v_2$-paths in $G$ and the cycles at $v_1$ and at $v_2$ (if any).

Assume indirectly that the double graph $\GG$ has a locally connected Eulerian $\bft$-partition $\mathcal{E} = \{E_1,E_2,E_3\}$.
Since  $\bft=(a+3,b, a+3-b)$, we have
\begin{equation}\label{e:size-new}
|E_1| = a+3, ~~~|E_2| = b, ~~~|E_3| = a+3-b\,.
\end{equation}
For each $j\in \{1,2,3\}$ and each edge $e\in E$, let $\mu_j(e)\in \{0,1,2\}$ denote the number of copies of $e$ in $\GG$ that belong to color class $E_j$.
	
Let us first note the following.

\begin{lemma}\label{l0-new}
		Let $W\in \mathcal{W}$ and let $e$ and $e'$ be two edges in $W$. Then
		\begin{equation}\label{e1-new}
		\mu_j(e)=\mu_j(e') ~~~\text{ for all }~~~ j=1,2,3\,.
		\end{equation}
\end{lemma}
		
\begin{proof}
It is enough to show the claim for any two adjacent and distinct edges $e$ and $e'$ in $W$ with a common endpoint $v$ such that $v\neq  v_1, v_2$. If for some $j\in \{1,\ldots,2b\}$ we have $\mu_j(e)\neq \mu_j(e')$, then either the fact that the graph $(V,E_j)$ defined by the color class $j$ is Eulerian,
or the connectedness of the graph $H_v^{\mathcal{E}}$ fails.
\end{proof}

To state the next lemma, we will need the following notation.
For all $j\in \{1,2,3\}$, we denote by $\textrm{supp}(E_{j})$ the {\it support of class $E_j$}, that is,
the set of edges $e$ in $G$ such that $\mu_j(e)\geq 1$.

\begin{lemma}\label{l0.5-new}
If the color class $E_j$ contains a copy of an edge from some $W\in \mathcal{W}$,
then $W$ is a subset of $\textrm{supp}(E_j)$.
\end{lemma}

\begin{proof}
Immediate from the previous lemma.
\end{proof}
	
We now analyze the two cases depending on the structure of $G$.

Suppose first that $G$ consists of one cycle $P$ at vertex $v_1$, one path $Q$ between $v_1$ and $v_2$,
and one cycle $R$ at vertex $v_2$, where the path and the two cycles are otherwise vertex disjoint
(as in the left-hand side of Fig.~\ref{fig:graph-5-4}). Let $e$ be the edge on path $Q$ incident with $v_1$ and let $j$ be a color class such that $e$ belongs to the support of $E_j$. Lemma~\ref{l0.5-new} implies that the set $\textrm{supp}(E_j)$ contains the edge set of $Q$ as a subset. By Lemma~\ref{l0-new}, all the edges of cycle $P$ appear in $E_j$ with the same multiplicity ($0$, $1$, or $2$). Since the color class $E_j$ is Eulerian, this implies that $\mu_j(e) = 2$. Using Lemma~\ref{l0-new},
we infer that all the edges of path $Q$ appear in $E_j$ with multiplicity $2$.
Lemma~\ref{lem:connected} implies that some edge of cycle $P$ incident with $v_1$ belongs to $\textrm{supp}(E_j)$ and consequently
the edge set of cycle $P$ is contained, as a subset, in $\textrm{supp}(E_j)$. Similarly, by considering vertex $v_2$, we obtain that the edge set of cycle $R$ is contained, as a subset, in $\textrm{supp}(E_j)$.
Consequently, $|E_{j}| \geq |E(P)|+2|E(Q)|+|E(R)|>|E| = a+3$, a contradiction.

\medskip
Suppose now that $G$ consists of three internally vertex-disjoint paths $Q_1$, $Q_2$, and $Q_3$ between $v_1$ and $v_2$
(as in the right-hand side of Fig.~\ref{fig:graph-5-4}).

\begin{lemma}\label{l2-new}
For any of the color classes $E_j$, the edge sets of at least two of the paths $Q_1$, $Q_2$, $Q_3$,
are contained, as subsets, in $\textrm{supp}(E_j)$.
\end{lemma}

\begin{proof}
Since $|E_j|>0$ for all $j\in \{1,2,3\}$, Lemma~\ref{l0.5-new} implies that for each color class $E_j$, its support $\textrm{supp}(E_j)$ contains, as a subset, the edge set of at least one $v_1,v_2$-path.
	
Assume indirectly that $\textrm{supp}(E_j)=E(Q_i)$ for some path $Q_i$.
By Lemma~\ref{l0-new} we have either $\mu_j(e)=1$ for all edges $e\in E(Q_i)$ or $\mu_j(e)=2$ for all edges $e \in E(Q_i)$.
We cannot have $\mu_j(e)=1$ for all edges $e\in E(Q_i)$, since otherwise the color class $E_j$ would not be Eulerian.
Thus, we have $\mu_j(e)=2$ for all edges $e\in E(Q_i)$. This, however, leads to a contradiction with Lemma~\ref{lem:connected} due to vertex $v_1$.
\end{proof}

For each color class $j\in \{1,2,3\}$, let $D_j = \sum_{e\ni v_1}\mu_j(e)$.
Since each color class is Eulerian, the numbers $D_j$ are all positive and even.
Furthermore,
$$\sum_{j= 1}^3D_j = \sum_{j= 1}^3\sum_{e\ni v_1}\mu_j(e) = \sum_{e\ni v_1}\sum_{j= 1}^3\mu_j(e) = 6\,,$$
where the last equality follows from the fact that
\textrm{$\mathcal{E}$} is a partition of the edge set of $\GG$.
It follows that $D_j = 2$ for all $j\in \{1,2,3\}$.
By Lemma~\ref{l2-new}, we have only one possibility up to renaming the paths:
for all $j\in \{1,2,3\}$, the support of color class $E_j$ contains the edges of
paths $Q_j$ and $Q_{j+1}$, with multiplicity equal to $1$ (indices modulo $3$).
For all $j\in \{1,2,3\}$, let us denote by $q_j$ the number of edges in path $Q_j$.
Since $|E_j| = q_j+q_{j+1}$ for all $j\in \{1,2,3\}$ (indiced modulo $3$), equations~\eqref{e:size-new} imply
\begin{eqnarray*}
q_1+q_2 &=& a+3\,,\\
q_2+q_3 &=& b\,,\\
q_3+q_1 &=& a+3-b\,.
\end{eqnarray*}
This system has a unique solution $(q_1,q_2,q_3) = (a+3-b,b,0)$. In particular, $q_3 = 0$, which contradicts the fact that all three paths have strictly positive length.
\end{proof}

\begin{table}[h!]
     \centering
     {
         \footnotesize
         \renewcommand{\arraystretch}{1.2}
         \begin{tabular}{|c|c|c|c|}
             \hline
             Proposition~\ref{thm:chi=2-example-1} &
             Proposition~\ref{thm:chi=2-example-2} &
             Proposition~\ref{thm:chi=2-example-4} &
             Proposition~\ref{thm:chi=2-example-3} \\
             \hline
             { $(a,b; 2^{(a+b)/2})$} &
             {  $(3,2^{n-2},1;n,n)$} &
             {  $(2a+b-1,1^{b+1}; t_1,\ldots,t_a)$} &
             {  $(3,3,2^a;a+3,b,a+3-b)$} \\
             \hline
             $(3,1; 2,2)$     & $(3,1; 2,2)$   & $(5,1,1,1;
4,4)$              & $(3,3; 3,2,1)$            \\
             $(4,2; 2,2,2)$     & $(3,2,1; 3,3)$   & $(5,1,1,1;
6,2)$              & $(3,3,2; 4,2,2)$          \\
             $(5,1; 2,2,2)$   & $(3,2,2,1; 4,4)$ & $(6,1,1,1,1;
6,4)$            & $(3,3,2; 4,3,1)$        \\
             $(5,3; 2,2,2,2)$   & $(3,2,2,2,1; 5,5)$ & $(6,1,1,1,1;
8,2)$            & $(3,3,2,2; 5,3,2)$      \\
             $(6,2; 2,2,2,2)$   & $(3,2,2,2,2,1; 6,6)$ & $(7,1,1,1;
4,4,2)$            & $(3,3,2,2; 5,4,1)$    \\
             $(7,1; 2,2,2,2)$  & $(3,2,2,2,2,2,1; 7,7)$ & $(7,1,1,1;
6,2,2)$            & $(3,3,2,2,2; 6,3,3)$  \\
             $(6,4; 2,2,2,2,2)$  &                  & $(8,1,1,1,1;
4,4,4)$          & $(3,3,2,2,2; 6,4,2)$                          \\
             $(7,3; 2,2,2,2,2)$  &                  & $(8,1,1,1,1;
6,4,2)$         & $(3,3,2,2,2; 6,5,1)$                         \\
             $(8,2; 2,2,2,2,2)$  &                  & $(8,1,1,1,1;
8,2,2)$          &                           \\
             $(9,1; 2,2,2,2,2)$& &           &                           \\

             \hline
         \end{tabular}
     }
     \caption{Examples following Propositions 5.1 --
5.4}\label{table-propositions}
\end{table}

\bigskip
We close this section with a list some further examples of
non-realizable pairs. We leave it to the reader to verify that they are indeed non-realizable.

\begin{proposition}\label{example-1}
Let $a\geq 1$ and $\alpha\geq \beta\geq 1$ be integers such that
$\alpha+\beta\neq a+2$ and
$\alpha+\beta/2\le a+2$. Then, the pair $(2a+4-(\alpha+\beta), \alpha,
\beta; 4, 2^a)$ is not realizable.
\end{proposition}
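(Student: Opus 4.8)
The plan is to exploit the fact that $\chi(\bfd;\bft)=n-\ell+m=3-(a+2)+(a+1)=2$, so that every map generating a plan with degree sequence $(\bfd;\bft)$ lives on the sphere $S_0$, and to argue entirely through sphere (planar) duality. Suppose for contradiction that $(\bfd;\bft)$ is realizable, and fix a geographic plan $P=(G,H)$ in which $G$ realizes $\bfd$ and $H$ realizes $\bft$, generated by a map on $S_0$. Since duality is an involution, the dual map has graph $H$ and its own dual realizes $\bfd$; thus $\bfd$ is exactly the multiset of boundary lengths of the faces of this $H$-map (the degree of a dual vertex being, by definition, the length of the boundary walk of the corresponding face). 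So I would read $\bfd$ off the face structure of $H$ embedded on the sphere.

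The first real step is to pin down $H$. As $\bft=(4,2^a)$, the graph $H$ is connected with one vertex $w_0$ of degree $4$ and all other vertices of degree $2$. All degrees are even, so by Theorem~\ref{thm:Eulerian} $H$ is Eulerian; tracing an Euler circuit (which passes through $w_0$ twice and through every other vertex exactly once) exhibits $H$ as a wedge of two cycles $C_1,C_2$ of lengths $k_1\ge k_2\ge 1$, meeting only at $w_0$, with $k_1+k_2=\ell=a+2$. In particular every edge of $H$ lies on a cycle, so $H$ has no bridges.

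The second step is to compute the faces of the $H$-map by a Jordan-curve argument, which avoids rotation systems and works for any sphere embedding. The cycle $C_1$ is a simple closed curve cutting the sphere into two disks; because $C_2$ meets $C_1$ only at $w_0$, the arc $C_2\setminus\{w_0\}$ lies in one of these disks, and $C_2$ then splits that disk in two. This yields exactly three faces, of boundary lengths $k_1$ (inside $C_1$), $k_2$ (inside $C_2$), and $k_1+k_2=\ell$ (the remaining region, which is bordered by every edge of $H$; here bridgelessness guarantees that each edge borders two distinct faces and so is counted once on each). Hence the dual of $H$ has degree sequence $\{\ell,k_1,k_2\}$, and therefore $\bfd=\{\ell,k_1,k_2\}$ as a multiset: its maximum entry equals $\ell$, and the two remaining entries sum to $\ell$.

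Finally I would invoke the hypotheses. The condition $\alpha+\beta/2\le a+2$ rewrites as $2\alpha+\beta\le 2\ell$, i.e. $d_1=2\ell-(\alpha+\beta)\ge\alpha\ge\beta$, so $d_1=\max\bfd$. Realizability thus forces $d_1=\ell$, equivalently $\alpha+\beta=\ell=a+2$, contradicting the hypothesis $\alpha+\beta\ne a+2$; hence $(\bfd;\bft)$ is not realizable. The remaining conditions play only supporting roles: $a\ge 1$ and $\alpha\ge\beta\ge 1$ guarantee $\ell\ge 3$ and that $\bfd$ is a genuine non-increasing degree sequence on three vertices admitting a connected realization, while $2\alpha+\beta\le 2\ell$ is precisely what makes $d_1$ the largest entry, so that ``$\alpha+\beta\ne\ell$'' is the correct non-realizability criterion. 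I expect the main obstacle to be making the topological face-count of the third step fully rigorous: one must treat a length-$1$ cycle (a loop, when $k_2=1$) as a simple closed curve, and verify carefully, using bridgelessness, that the three boundary lengths are exactly $k_1,k_2,\ell$ with total edge-incidence $2\ell$, so that the dual degree sequence is pinned down regardless of the particular embedding.
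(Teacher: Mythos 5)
Your proof is correct. Note, however, that the paper offers no proof of Proposition~\ref{example-1} at all --- it is one of the results explicitly ``left to the reader'' --- so the only meaningful comparison is with the paper's standard technique, used for Propositions~\ref{thm:chi=2-example-1}--\ref{thm:chi=2-example-3} and Theorem~\ref{thm:chi=1}: take a connected realization of one of the two sequences, form its double graph, and rule out a locally connected Eulerian partition with the prescribed class sizes via Lemma~\ref{lem:connected} and multiplicity arguments. You instead work topologically: since $\chi(\bfd;\bft)=2$ pins the surface to $S_0$, you identify every connected realization $H$ of $(4,2^a)$ as a wedge of two cycles of lengths $k_1+k_2=\ell$ and use the Jordan curve theorem to show any sphere embedding has face-degree multiset $\{k_1,k_2,\ell\}$, so that $\bfd=\{k_1,k_2,\ell\}$ by duality; the hypothesis $2\alpha+\beta\le 2\ell$ forces the first entry of $\bfd$ to be the maximum, hence equal to $\ell$, i.e.\ $\alpha+\beta=a+2$, contradicting the hypothesis. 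The combinatorial route reaches the same structural conclusion (the three color classes must consist of one copy of $C_1\cup C_2$, one copy of $C_1$, and one copy of $C_2$, again giving $\bfd=\{\ell,k_1,k_2\}$) without invoking any topology, and is the one that generalizes to $\chi=1$ where no Jordan-curve argument is available; your route is more geometric and arguably shorter here, but is specific to the sphere. All the delicate points you flag --- treating loops and digons as simple closed curves, and checking that each edge of a cycle contributes once to each of the two faces it separates --- do go through, so the argument is complete.
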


\begin{proposition}\label{example-2}
Let $a\geq 1$ and $\alpha\geq  \beta\geq  \gamma\geq  \delta$ be
integers such that
  $\alpha+\beta+\gamma+\delta = 2a+6$, and ($\alpha+\delta \neq a+3$ or
$\gamma+\delta\geq a+3$) and $\alpha\neq a+3$.
       Then, the pair
     $(\alpha, \beta, \gamma, \delta; 6, 2^a)$ is not realizable.
\end{proposition}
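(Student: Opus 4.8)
The plan is to argue on the dual side, via the equivalence $a)\Leftrightarrow d)$ of Theorem~\ref{thm:geographic-degree-sequences}: the pair $(\bfd;\bft)$ is realizable if and only if some connected realization $H$ of $\bft=(6,2^a)$ admits a locally connected Eulerian $\bfd$-partition of its double graph $\HH$. The advantage of working with $\bft$ rather than $\bfd$ is that its realizations are rigid. I would first show that every connected realization $H$ of $(6,2^a)$ is a \emph{bouquet of three cycles at the unique degree-$6$ vertex $w$}: deleting $w$ leaves a disjoint union of paths and isolated vertices (a surviving cycle would be a component avoiding $w$, contradicting connectedness), each such component reattaching to $w$ as a cycle through $w$, while loops at $w$ count as cycles of length one. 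Since $w$ has six edge-ends, there are exactly three such cycles $C_1,C_2,C_3$, meeting only at $w$, of lengths $\ell_1,\ell_2,\ell_3\ge 1$ with $\ell_1+\ell_2+\ell_3=a+3$ (we may assume $\delta\ge 1$, else $\bfd$ has no connected realization and the pair is trivially non-realizable).

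Next I would constrain a hypothetical locally connected Eulerian $\bfd$-partition $\mathcal{E}=\{E_1,E_2,E_3,E_4\}$, whose class sizes form the multiset $\{\alpha,\beta,\gamma,\delta\}$. At each internal (degree-$2$) vertex of a cycle, the Eulerian (parity) condition forces the two incident edges to have equal multiplicity in every color, exactly as in Lemma~\ref{l0-new}; hence all edges of a fixed cycle $C_i$ carry one common multiplicity vector $m^i\in\{0,1,2\}^4$ with $\sum_j m^i_j=2$. Thus each cycle is either \emph{doubled} (both copies of each edge in a single color) or \emph{split} (two colors, one copy each). The decisive step is local connectivity at $w$: all four colors appear as vertices of $H_w^{\mathcal{E}}$ (each nonempty class lies on some cycle, hence on its edges at $w$), whereas the edges of $H_w^{\mathcal{E}}$ are precisely the color pairs contributed by the split cycles, one per split cycle and none from doubled cycles (cf.\ Lemma~\ref{lem:connected}). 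A connected graph on four vertices needs at least three edges, so all three cycles must be split and their three color pairs must form a spanning tree on $\{1,2,3,4\}$.

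It then remains to split into the two shapes of such a tree, reading each class size as the sum of the lengths of the split cycles carrying that color. If the tree is a \emph{star} with center $c$, then $c$ collects all three lengths, so $|E_c|=a+3$, while the leaf colors receive the individual lengths, each at most $a+1<a+3$; matching multisets then forces $\alpha=a+3$. If the tree is a \emph{path} $p\,\text{--}\,q\,\text{--}\,r\,\text{--}\,s$ with edge-lengths $x,y,z\ge 1$ ($x+y+z=a+3$), the class sizes are $x, x+y, y+z, z$, the leaves $p,s$ getting $x,z$ and the internal colors $q,r$ getting $(a+3)-z, (a+3)-x$. Since each leaf size is strictly smaller than its complementary internal size, the two leaves are the two smallest degrees, forcing $\delta=x$, $\gamma=z$, $\beta=(a+3)-z$, $\alpha=(a+3)-x$; hence $\alpha+\delta=a+3$, and $y=(a+3)-x-z\ge1$ gives $\gamma+\delta=x+z\le a+2$. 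Finally, $\alpha\ne a+3$ rules out the star, and ``$\alpha+\delta\ne a+3$ or $\gamma+\delta\ge a+3$'' is exactly the negation of ``$\alpha+\delta=a+3$ and $\gamma+\delta\le a+2$'', ruling out the path; so no valid partition exists and the pair is not realizable.

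I expect the main obstacle to be the local-connectivity step at $w$: one must verify carefully that only split cycles contribute edges to $H_w^{\mathcal{E}}$ (handling the loop case, where both copies of the loop are incident to $w$) and then combine this with the counting ``at most three cycles, but four colors to connect'' to force a spanning tree. Once the star/path dichotomy is established, the remaining work — matching the sorted class sizes to $(\alpha,\beta,\gamma,\delta)$ and using the positivity $y\ge 1$ of the middle cycle — is routine and reproduces the stated hypotheses precisely.
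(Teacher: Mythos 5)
The paper does not actually prove Proposition~\ref{example-2}: it appears in the block of statements explicitly ``left to the reader'', so there is no official proof to compare against. Your strategy is precisely the one the paper deploys for the proved analogues (Proposition~\ref{thm:chi=2-example-3} and Theorem~\ref{thm:chi=1}): invoke Theorem~\ref{thm:geographic-degree-sequences} on the side with the rigid realization, observe that every connected realization of $(6,2^a)$ is a bouquet of three cycles $C_1,C_2,C_3$ at the degree-$6$ vertex $w$ with lengths summing to $a+3$, use the Eulerian parity condition at degree-$2$ vertices (as in Lemma~\ref{l0-new}) to give each cycle a constant multiplicity vector, and translate local connectivity at $w$ into connectivity of an auxiliary graph on the four colors whose edges come only from split cycles. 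The reduction ``three cycles, four colors, hence all three cycles split and the three color pairs form a spanning tree of $K_4$, i.e.\ a star or a path'' is correct, as is the star case ($\alpha=a+3$ is forced, contradicting the hypothesis).

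There is, however, one false intermediate claim in the path case. You assert that each leaf size is strictly smaller than its complementary internal size, so that ``the two leaves are the two smallest degrees'', and you set $\delta=x$, $\gamma=z$, $\beta=(a+3)-z$, $\alpha=(a+3)-x$. This fails in general: with lengths $x=y=1$ and $z=a+1$ the four class sizes are $1,\ 2,\ a+2,\ a+1$, so the leaf $s$ (size $z=a+1$) is the \emph{second largest} class for $a\geq 2$, and your proposed identification violates $\beta\geq\gamma$. The conclusion you need nevertheless survives, by a pairing argument: the four sizes $x,\ x+y,\ y+z,\ z$ split into the pairs $\{x,\,y+z\}$ and $\{z,\,x+y\}$, each summing to $a+3$. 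Since $x+y>x$ and $y+z>z$, the maximum of the four is $\max(x+y,\,y+z)$ and the minimum is $\min(x,z)$; assuming without loss of generality $x\geq z$ (reverse the path otherwise), we get $\alpha=x+y$ and $\delta=z$, hence $\alpha+\delta=a+3$, and $\gamma=\min(x,\,y+z)\leq x$ gives $\gamma+\delta\leq x+z=(a+3)-y\leq a+2$. This is exactly the conjunction whose negation is hypothesized, so the path case is excluded and the proof goes through once this step is repaired.
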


\begin{proposition}\label{example-3}
Let $a\geq 0$ and $\alpha\geq  \beta\geq  \gamma\geq  \delta$ be
integers such that $\alpha+\beta+\gamma+\delta = 2a+8$ and\begin{itemize}
     \item $\alpha>a+2$ or $\delta= 1$ or $\alpha+\delta \neq a+4$,
     \item $\alpha\neq a+4$,
     \item $\alpha \neq a+3$ or $\gamma\neq 1$.
   \end{itemize}
       Then, the pair
       $(\alpha, \beta, \gamma, \delta; 4, 4, 2^a)$  is not realizable.
\end{proposition}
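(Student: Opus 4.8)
The plan is to invoke statement~$d)$ of Theorem~\ref{thm:geographic-degree-sequences}: the pair $(\bfd;\bft)$ is realizable if and only if some connected realization $H$ of $\bft=(4,4,2^a)$ admits a locally connected Eulerian $\bfd$-partition of its double graph $\HH$. I would work with $\bft$ rather than $\bfd$ because realizations of $\bft$ are rigid: suppressing the $a$ vertices of degree $2$ leaves a connected multigraph on the two degree-$4$ vertices $u$ and $w$, and the only possibilities are four parallel $u$--$w$ edges, or two parallel $u$--$w$ edges together with one loop at $u$ and one loop at $w$ (a configuration with no $u$--$w$ edge would be disconnected). Hence every connected realization $H$ is a subdivision of one of two shapes: (Type~A) four internally vertex-disjoint $u$--$w$ paths, or (Type~B) two $u$--$w$ paths plus a cycle through $u$ and a cycle through $w$. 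I would assume for contradiction that $\HH$ carries a locally connected Eulerian $\bfd$-partition $\mathcal{E}=\{E_1,E_2,E_3,E_4\}$, and for each edge $e$ of $H$ and colour $j$ let $\mu_j(e)\in\{0,1,2\}$ count the copies of $e$ lying in $E_j$, so that $\sum_j\mu_j(e)=2$.

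The first step is the analogue of Lemma~\ref{l0-new}: along every subdivided path, and around every subdivided cycle, all edges share the same multiplicity vector $(\mu_1(e),\dots,\mu_4(e))$, since otherwise the Eulerian condition, or the connectedness of $H_v^{\mathcal{E}}$ at an interior degree-$2$ vertex $v$, would fail. This collapses the data to one vector summing to $2$ per super-edge of the suppressed multigraph, and expresses each $|E_j|$ as an integer combination of the (positive) path and cycle lengths, which sum to $\ell=a+4$.

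The heart of the proof is the local analysis at the two hubs. For each colour $j$ and each hub $x\in\{u,w\}$ the degree of $x$ in $(V(H),E_j)$ is even and $\sum_j\deg_{E_j}(x)=2\deg_H(x)=8$, while Lemma~\ref{lem:connected} forbids any colour from being isolated in $H_x^{\mathcal{E}}$. In Type~A this forces every path to split its two copies between two distinct colours and forces these colour pairs to form a $4$-cycle on $\{1,2,3,4\}$; the four degrees then split into two pairs each summing to $a+4$, realizable by positive path lengths exactly when $\alpha+\delta=a+4$ and $\delta\ge 2$, i.e.\ exactly when condition~(i) fails. In Type~B, each hub-cycle must attach to a colour already present through the paths, and reading off $|E_j|$ in terms of the two path lengths and the two cycle lengths yields a short list of achievable degree multisets; conditions~(ii) and~(iii) are tailored to eliminate these. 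Assembling the cases, the simultaneous validity of all three displayed conditions would leave no admissible assignment, giving non-realizability.

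The step I expect to be the main obstacle is the Type~B analysis. Because each hub-cycle behaves like a loop at a degree-$4$ vertex, it can absorb an entire colour class by itself or share a colour with the paths, so the cycles create achievable degree sequences that the purely path-based argument of Proposition~\ref{thm:chi=2-example-3} never encounters; one must separate the case of a length-$1$ loop from that of a longer subdivided cycle and track, in each, how the two copies of the incident edges are coloured. Making this enumeration exhaustive and matching it precisely against the disjunctions in~(i)--(iii)—so that positivity of all lengths together with local connectivity genuinely rules out every assignment when the hypotheses hold—is the delicate part; by comparison the Type~A computation and the constancy lemma are routine.
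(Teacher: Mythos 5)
The paper offers no proof of Proposition~\ref{example-3} to compare against --- it is one of the statements explicitly ``left to the reader''. Your framework is nevertheless the right one (it is the method of Proposition~\ref{thm:chi=2-example-3} and Theorem~\ref{thm:chi=1}): the classification of connected realizations of $\bft=(4,4,2^a)$ into the two suppressed shapes, the constancy of the multiplicity vector along each subdivided path or cycle, and the count $\sum_j\deg_{E_j}(u)=8$ with each summand positive and even at each hub are all correct, and your Type~A conclusion (realizable through four parallel paths iff $\alpha+\delta=a+4$ and $\delta\geq 2$, i.e.\ exactly when bullet~(i) fails) checks out.

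The gap is the Type~B analysis, which you never carry out and instead close with the assertion that ``conditions~(ii) and~(iii) are tailored to eliminate these''. They are not, and this is precisely where the argument --- and in fact the statement itself --- breaks down. Bullet~(ii) does kill the colouring in which both hub cycles share a colour with the \emph{same} path colour (that forces a class of size $a+4$). But consider the colouring in which $Q_1,Q_2$ both split between colours $c,c'$, the cycle $P$ at $u$ splits between a private colour $e$ and $c$, and the cycle $R$ at $w$ splits between the other private colour $f$ and $c'$. This is locally connected and Eulerian and yields the size multiset $\{p,\ r,\ p+s,\ r+s\}$ with $s=q_1+q_2\geq 2$ and $p,r\geq 1$; taking $p=1$, $r\geq 2$ gives realizable pairs with $\alpha=a+3$, $\delta=1$, $\gamma\geq 2$, which satisfy all three bullets. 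Concretely, let $a=1$ and let $H$ have vertices $u,w,y$, a loop $\lambda$ at $u$, parallel edges $g_1,g_2$ between $u$ and $w$, and parallel edges $h_1,h_2$ between $w$ and $y$; then $H$ realizes $(4,4,2)$, and the partition $E_1=\{\lambda'\}$, $E_2=\{\lambda'',g_1',g_2'\}$, $E_3=\{g_1'',g_2'',h_1',h_2'\}$, $E_4=\{h_1'',h_2''\}$ of $\HH$ is an Eulerian $(4,3,2,1)$-partition whose local graphs $H_v^{\mathcal{E}}$ are the paths $1\mbox{--}2\mbox{--}3$, $2\mbox{--}3\mbox{--}4$, $3\mbox{--}4$, hence locally connected. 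By Theorem~\ref{thm:geographic-degree-sequences} the pair $(4,3,2,1;4,4,2)$ is realizable (equivalently, draw on the sphere the graph with a pendant edge $CD$, a double edge $CA$ and a double edge $AB$ so that the three faces have degrees $4,4,2$), yet it satisfies all three bullets with $a=1$. So your plan cannot be completed as described: the third bullet is too weak and would need to be strengthened (to something like ``$\alpha\neq a+3$ or $\delta\neq 1$ or $\gamma\geq a+2$'') before the Type~B enumeration closes. The constructive half of your own method is exactly what exposes this, so the missing case analysis is not a formality you may wave at the stated conditions --- it has to be done, and doing it refutes the proposition as written.
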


\begin{proposition}\label{example-4}
For every integer $a\geq  0$, the pair $(a+3,a+3,1,1;5,3,2^a)$ is not
realizable.
\end{proposition}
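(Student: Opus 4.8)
The plan is to use the equivalence in Theorem~\ref{thm:geographic-degree-sequences}: it suffices to show that no connected realization $G$ of $\bfd=(a+3,a+3,1,1)$ admits a locally connected Eulerian $\bft$-partition of its double graph $\GG$, where $\bft=(5,3,2^a)$. First I would classify the realizations: $G$ has two ``hubs'' $v_1,v_2$ of degree $a+3$ and two leaves $v_3,v_4$; each leaf-edge attaches to a hub, and the remaining incidences consist of $\mu\ge 1$ parallel $v_1v_2$-edges together with $\lambda_i$ loops at $v_i$, subject to $\lambda_2-\lambda_1=p-1$, where $p\in\{0,1,2\}$ is the number of leaves at $v_1$. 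I then fix a hypothetical locally connected Eulerian $\bft$-partition $\mathcal E$, consisting of one class of size $5$, one of size $3$, and $a$ classes of size $2$, and aim for a contradiction.

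The next step is to record the elementary constraints forced by the Eulerian and local-connectivity requirements. Parity at a leaf forces both copies of each leaf-edge into a single class, and Lemma~\ref{lem:connected} (applied at the incident hub) forbids a class equal to a lone leaf-digon; hence each leaf-digon lies in the size-$5$ or the size-$3$ class. Parity at the hubs (loops and leaf-digons contribute even degree) forces the number of $v_1v_2$-edge-copies in every class to be even, whence \emph{the size of a class is congruent mod $2$ to the number of loop-copies it contains}; in particular both odd classes contain a positive number of loop-copies, which already disposes of every loopless realization (those with $p=1$ and $\lambda_1=\lambda_2=0$). A size-$2$ Eulerian class is either a $v_1v_2$-digon or a double-loop at one hub, so it contains an even number of $v_i$-loop-copies for each $i$; combined with $\sum\#(v_i\text{-loops})=2\lambda_i$ this shows the two odd classes hold $v_i$-loop-copies of the same parity, and together with the size parities I may normalize (swapping $v_1\leftrightarrow v_2$) so that \emph{both odd classes contain an odd number of $v_1$-loop-copies and an even number of $v_2$-loop-copies}.

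The heart of the argument is a connectivity obstruction at $v_2$: no class may contain both a $v_2$-loop-copy and a $v_1v_2$-edge-copy (else $v_2$ would have odd degree in it), so whenever neither odd class carries a $v_2$-loop and $\lambda_2\ge 1$, the colors at $v_2$ split into $v_2$-loop-only and $v_1v_2$-only groups that no split edge can join, making the vertex graph $H_{v_2}^{\mathcal E}$ disconnected. With this tool the case analysis is short. If both leaf-digons lie in the size-$5$ class, that class is two leaf-digons plus one further edge, which for connectivity must be a loop at the common hub of the two leaves; by the normalization this forces both leaves at $v_1$, hence $p=2$ and $\lambda_2\ge 1$, and keeps both odd classes off $v_2$, triggering the obstruction (the residual shape of the size-$3$ class is either caught by the obstruction or is disconnected outright). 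If the two leaf-digons lie in different odd classes, the size-$3$ class is a leaf-digon plus a single $v_1$-loop with its leaf at $v_1$, and the size-$5$ class is a leaf-digon plus three further edges of odd loop-count; the three possible shapes are three $v_1$-loops (both leaves at $v_1$, obstruction at $v_2$), one $v_1$-loop and two $v_2$-loops (disconnected for lack of a $v_1v_2$-edge), or one $v_1$-loop and a $v_1v_2$-digon. In the last shape the digon must use two distinct $G$-edges (otherwise Lemma~\ref{lem:connected} isolates the class at $v_2$), and then $\lambda_2\ge 1$ invokes the obstruction while $\lambda_2=0$ collides with the normalization and the identity $\lambda_2-\lambda_1=p-1$, since it would force $p=0$ against the already-derived placement of a leaf at $v_1$.

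The step I expect to be the main obstacle is setting up the two bookkeeping devices that make the analysis terminate: the per-hub loop-parity normalization and, above all, the disconnection lemma at $v_2$. Once these are in place the enumeration of the tiny size-$3$ class is routine, but without them the configurations with both $\mu$ and the $\lambda_i$ large retain enough freedom in distributing loop- and edge-copies among the many size-$2$ classes that no naive size or degree count closes the argument; the crux is precisely that a $v_2$-loop can never share a legal class with a $v_1v_2$-edge, so the loops at $v_2$ become stranded from the rest of $G$ exactly when the two odd classes are pushed off $v_2$.
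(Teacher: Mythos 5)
The paper itself gives no proof of this proposition --- it is one of the statements explicitly ``left to the reader'' --- so there is no authorial argument to compare against. On its own terms, your proof follows exactly the methodology the paper uses for its proved non-realizability results (reduction via Theorem~\ref{thm:geographic-degree-sequences} to partitions of $\GG$, Lemma~\ref{lem:connected}, and parity/local-connectivity analysis of the colour classes, as in Propositions~\ref{thm:chi=2-example-1}--\ref{thm:chi=2-example-3} and Theorem~\ref{thm:chi=1}). The skeleton is sound and, as far as I can check, the enumeration is exhaustive: leaf-digons confined to the two odd classes, class size congruent mod~$2$ to the number of loop-copies, the loop-parity normalization, and the shapes of the size-$3$ and size-$5$ classes together close every case.

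The one step that is wrong as written is the blanket claim that ``no class may contain both a $v_2$-loop-copy and a $v_1v_2$-edge-copy (else $v_2$ would have odd degree in it).'' You have already established that every class contains an \emph{even} number of $v_1v_2$-edge-copies, so a class with one $v_2$-loop-copy and two $v_1v_2$-edge-copies has degree $4$ at $v_2$, degree $2$ at $v_1$, and a connected support: it is a perfectly legitimate Eulerian class, and the parity justification excludes nothing beyond what you had already excluded. What you actually need, and what is true, is the statement for size-$2$ classes only: a size-$2$ class containing a $v_2$-loop-copy has no room for an (even, hence $\geq 2$) number of $v_1v_2$-edge-copies, nor for a leaf-digon, nor for a loop-copy at the other hub (disconnected support), so it is a pure double-loop at $v_2$. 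Under the hypothesis of your obstruction --- neither odd class carries a $v_2$-loop-copy --- every $v_2$-loop-copy therefore lies in such a pure double-loop class; both copies of every $v_2$-loop stay inside that group of colours, and both copies of every $v_1v_2$-edge and leaf-edge at $v_2$ stay outside it, so $H_{v_2}^{\mathcal{E}}$ indeed splits when $\lambda_2\geq 1$. Since under your normalization none of the surviving configurations ever places a $v_2$-loop-copy in an odd class together with a $v_1v_2$-edge-copy, the false general claim is never actually invoked in a case where it fails; the repair is purely local and the proof stands once the obstruction is restated in this weaker, correct form.
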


\begin{proposition}\label{example-5}
For every integer $a\geq  1$ and every three even integers $\alpha\geq
\beta\geq  \gamma$ such that $\alpha+\beta+\gamma = 2a + 6$, the pair
$(\alpha,\beta,\gamma; a+4,2,1^a)$ is not realizable.
\end{proposition}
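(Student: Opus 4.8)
The plan is to apply the characterization of Theorem~\ref{thm:geographic-degree-sequences} through its condition c): I will show that no connected realization $G$ of $\bfd=(\alpha,\beta,\gamma)$ admits a locally connected Eulerian $\bft$-partition of its double graph $\GG$. A connected realization on three vertices exists only when $\gamma\ge 2$ (a zero entry would force an isolated vertex, making the pair trivially non-realizable), so I assume $\gamma\ge 2$. Such a $G$ is a connected multigraph on $v_1,v_2,v_3$ with even degrees, which I parametrize by the loop counts $\ell_1,\ell_2,\ell_3$ and the multiplicities $p_{12},p_{13},p_{23}$ of the three vertex pairs; then $\GG$ has $2(\ell_1+\ell_2+\ell_3)$ loop-copies and $2(p_{12}+p_{13}+p_{23})$ non-loop copies, $2a+6$ edges in all. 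Assuming for contradiction a locally connected Eulerian $\bft$-partition $\mathcal{E}=\{E_1,\ldots,E_{a+2}\}$ with $|E_1|=a+4$, $|E_2|=2$, and $|E_3|=\cdots=|E_{a+2}|=1$, the first observation is structural: a single edge is Eulerian only if it is a loop, so each of the $a$ size-one classes is a single loop-copy; and a two-edge Eulerian graph is either a digon or two loops at a common vertex, so $E_2$ has one of these two shapes.

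Next I would pin down $G$ by counting. If both copies of some loop at a vertex $w$ were size-one classes, they would form an isolated edge in $H_w^{\mathcal{E}}$ unless $w$ carried no other edge, which is impossible for a connected three-vertex graph; hence each loop contributes at most one copy to the size-one classes, so $G$ has at least $a$ loops. Since $G$ has exactly $a+3$ edges, the number $p:=p_{12}+p_{13}+p_{23}$ of non-loop edges satisfies $p\le 3$. Because all degrees are even, $p_{12},p_{13},p_{23}$ share a common parity, and an all-even choice cannot connect three vertices with $p\le 3$ (connectivity would force $p\ge 4$); hence all three equal $1$. Thus the only realization that could possibly work is the triangle on $v_1,v_2,v_3$ together with $\ell_1+\ell_2+\ell_3=a$ loops, each loop contributing exactly one copy to the singletons.

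The crux — and the step I expect to be the main obstacle, since the size bounds and Lemma~\ref{lem:connected} do not by themselves finish it — is to rule out this triangle-plus-loops realization. The key point is that the six triangle-edge copies cannot lie in any singleton (they are not loops) and cannot lie in $E_2$: a digon on a triangle edge $v_iv_j$ would place both copies of $v_iv_j$ in $E_2$ while $E_2$ misses every other edge at $v_i$, violating Lemma~\ref{lem:connected}. Therefore all six triangle-edge copies belong to $E_1$, and $E_2$ must consist of two loop-copies which, to be Eulerian, sit at one vertex $v_i$ carrying at least two loops; if no vertex carries two loops, $E_2$ cannot be formed and we are done. It remains to contradict the case $E_2=\{\text{two loop-copies at }v_i\}$. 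Here I would inspect $H_{v_i}^{\mathcal{E}}$: the colour $E_1$ appears (both copies of each edge $v_iv_j,v_iv_k$ lie in $E_1$) and so does $E_2$. But every edge of $G$ at $v_i$ is either a triangle edge (both copies in $E_1$) or a loop (split between a singleton and exactly one of $E_1,E_2$), so no edge of $G$ at $v_i$ has one copy in $E_1$ and the other in $E_2$. Hence $H_{v_i}^{\mathcal{E}}$ has no edge joining $E_1$ to $E_2$, placing them in different components and contradicting local connectivity. Together with the reduction above, this shows no realization of $\bfd$ admits the required partition, so by Theorem~\ref{thm:geographic-degree-sequences} the pair $(\bfd;\bft)$ is not realizable.
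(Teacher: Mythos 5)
Your argument is correct and complete. The paper itself gives no proof of Proposition~\ref{example-5} (it is one of the statements explicitly ``left to the reader''), but your proof runs on exactly the machinery the paper uses for Propositions~\ref{thm:chi=2-example-1}--\ref{thm:chi=2-example-3}: Theorem~\ref{thm:geographic-degree-sequences}(c), Lemma~\ref{lem:connected}, and a structural classification of the connected realizations of the three-term sequence. All the key steps check out, including the points where care is needed: the singleton classes force at least $a$ loops and hence (via the parity of $p_{12},p_{13},p_{23}$ and connectivity) the triangle-plus-$a$-loops realization; each loop places exactly one copy in a singleton, so $E_2$ cannot be two copies of one loop; the digon option for $E_2$ dies by Lemma~\ref{lem:connected}; and in the remaining case no edge at $v_i$ has one copy in $E_1$ and one in $E_2$, so $H_{v_i}^{\mathcal{E}}$ is disconnected. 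The boundary cases ($\gamma=0$, and realizations with at most one loop per vertex, where $E_2$ cannot even be formed) are also handled.
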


In Tables~\ref{table-examples} and~\ref{table-examples2} we list some small examples of
non-realizable pairs given by Propositions~\ref{example-1}--\ref{example-5}.

\begin{table}[h!]
	\tiny
\centering
{
  \footnotesize
\renewcommand{\arraystretch}{1.2}
\begin{tabular}{|c|c|c|}
   \hline
Proposition~\ref{example-1} &
Proposition~\ref{example-2} &
Proposition~\ref{example-3} \\
   \hline
{ $(2a+4-(\alpha+\beta),\alpha,\beta;4,2^a)$} &
{  $(\alpha, \beta, \gamma, \delta; 6, 2^a)$} &
{  $(\alpha, \beta, \gamma, \delta; 4, 4, 2^a)$} \\
  \hline
   $(2,2,2; 4,2)$     &   $(2,2,2,2; 6,2)$   &   $(3,2,2,1;
4,4)$              \\
  $(4,1,1; 4,2)$     & $(5,1,1,1; 6,2)$   & $(5,1,1,1;
4,4)$              \\
  $(3,3,2; 4,2,2)$   & $(3,3,3,1; 6;2,2)$ & $(3,3,3,1;
4,4,2)$            \\
  $(5,2,1; 4,2,2)$   & $(4,2,2,2; 6;2,2)$ & $(4,2,2,2;
4,4,2)$            \\
  $(6,1,1; 4,2,2)$   & $(6,2,1,1; 6,2,2)$ & $(6,2,1,1;
4,4,2)$            \\
$(4,3,3; 4,2,2,2)$  & $(7,1,1,1; 6,2,2)$ & $(7,1,1,1; 4,4,2)$
\\
$(4,4,2; 4,2,2,2)$  &                  & $(4,4,3,1; 4,4,2,2)$
\\
$(6,2,2; 4,2,2,2)$  &                  & $(5,3,2,2; 4,4,2,2)$
\\
$(6,3,1; 4,2,2,2)$  &                  & $(7,2,2,1; 4,4,2,2)$
\\
$(7,2,1; 4,2,2,2)$  &                  & $(7,3,1,1; 4,4,2,2)$
\\
$(8,1,1; 4,2,2,2)$  &                  & $(8,2,1,1; 4,4,2,2)$\\
                     &                  & $(9,1,1,1; 4,4,2,2)$\\

   \hline
\end{tabular}
}
\caption{Small examples of non-realizable pairs given by
Examples~\ref{example-1}--\ref{example-3}, $\chi =
2$.}\label{table-examples}
\end{table}

\begin{table}[h!]
	\tiny
	\centering
	{
		\footnotesize
		\renewcommand{\arraystretch}{1.2}
		\begin{tabular}{|c|c|}
			\hline
			Proposition~\ref{example-4} &
			Proposition~\ref{example-5} \\
			\hline
			{  $(a+3,a+3,1,1;5,3,2^a)$} &
			{  $(\alpha,\beta,\gamma; a+4,2,1^a)$}\\
			\hline
 $(3,3,1,1; 5,3)$   & $(2,2,2; 4,2)$ \\
 $(4,4,1,1; 5,3,2)$ & $(4,2,2; 5,2,1)$ \\
 $(5,5,1,1; 5,3,2,2)$        & $(4,4,2; 6,2,1,1)$  \\
 $(6,6,1,1; 5,3,2,2,2)$      & $(6,2,2; 6,2,1,1)$ \\
 $(7,7,1,1; 5,3,2,2,2,2)$    & $(4,4,4; 7,2,1,1,1)$ \\
 $(8,8,1,1; 5,3,2,2,2,2,2)$  & $(6,4,2; 7,2,1,1,1)$\\
                             & $(8,2,2; 7,2,1,1,1)$ \\
                             & $(6,4,4; 8,2,1,1,1,1)$\\
                             & $(6,6,2; 8,2,1,1,1,1)$\\
                             & $(8,4,2; 8,2,1,1,1,1)$ \\			
			\hline
		\end{tabular}
	}
	\caption{(More) small examples of non-realizable pairs given by Examples~\ref{example-4} and \ref{example-5}, $\chi = 2$.}
	\label{table-examples2}
\end{table}

\section{A two-parametric infinite family of sequences pairs not realizable on the projective plane}
\label{sec:non-realizable-on-projective-plane}

Recall that, given two integer sequences $\bfd=(d_1,\ldots,d_n)\in\ZZ_+^n$ and
$\bft=(t_1,\ldots,t_m)\in\ZZ_+^m$, the pair $(\bfd;\bft)$ is said to be realizable if it is the degree sequence of a geographic plan.

\begin{theorem}\label{thm:chi=1}
Let $a\geq 3$ and $b\geq 1$ be integers, and consider the sequences $\bfd=(a,a,\ldots,a)\in \ZZ_+^{2b}$ and $\bft=(2b+1,2b+1,2,\ldots,2)\in \ZZ_+^{ab-2b+1}$. Then the pair $(\bfd;\bft)$ is not realizable.
(Note that
$\chi(\bfd;\bft)=1$.)
\end{theorem}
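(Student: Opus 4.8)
The plan is to apply Theorem~\ref{thm:geographic-degree-sequences}, working on the ``$H$-side'': by the equivalence of statements a), b) and d), it suffices to show that \emph{no} connected realization $H$ of $\bft$ has a double graph $\HH$ admitting a locally connected Eulerian $\bfd$-partition. I prefer $H$ to $G$ because a realization of $\bft=(2b+1,2b+1,2^{ab-2b-1})$ is very rigid: two ``hubs'' $u,w$ of degree $2b+1$ and all other vertices of degree $2$. Thus $H$ is a subdivision of a two-vertex multigraph $H_0$ with $p$ parallel $u,w$-edges and $q$ loops at each hub, where $p+2q=2b+1$; in particular $p=2b+1-2q$ is \emph{odd}. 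Each edge of $H_0$ becomes a path (a \emph{super-edge}) of length $\geq 1$, and the lengths sum to $ab$.

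First I would reduce the coloring to $H_0$. Suppose for contradiction that $\HH$ has a locally connected Eulerian $\bfd$-partition $\mathcal{E}=\{E_1,\ldots,E_{2b}\}$, each class of size $a$. Exactly as in Lemma~\ref{l0-new} (used for Proposition~\ref{thm:chi=2-example-3}), the Eulerian condition at the interior degree-$2$ vertices together with local connectivity forces the multiplicity vector to be constant along each super-edge; hence each of the two copies of a super-edge in $\HH$ is monochromatic. So $\mathcal{E}$ is a coloring of the $2(2b+1)$ super-edge copies by $2b$ colors, where the size of a class is the total length of the copies it receives, equal to $a$.

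Next I would extract the structural constraints. A loop-copy contributes an even amount to the degree of its hub, while a $u,w$-super-edge copy contributes $1$ to each hub; hence by Theorem~\ref{thm:Eulerian} \emph{every class contains an even number of $u,w$-super-edge copies}. Connectedness then splits $\mathcal{E}$ into \emph{crossing} classes (at least two $u,w$-copies, meeting both hubs) and \emph{local} classes (only loop-copies at one hub). Counting the $u,w$-copies gives $\sum_{\text{crossing}}(\#\,u,w\text{-copies})=2p$, so the number of crossing classes is at most $p$, whence the number of local classes is at least $2b-p=2q-1$. In the opposite direction, Lemma~\ref{lem:connected} forbids any class from being ``pure'' at a hub, i.e.\ from fully containing some super-edge there while missing another and having no mixed super-edge at that hub; applied to a local class this forces it to contain some loop with only a single copy, and combined with local connectivity at the hub it bounds the number of local classes from above, roughly by the number $q$ of loops available. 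The clean case is $a=3$: there the even count is $0$ or $2$, a class with $2$ copies has length exactly $3$, and one checks the budget of $2b$ classes against the $2p=2(2b+1)$ crossing copies fails directly; and when $q=0$ there are no local classes at all, so the two bounds collide.

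The main obstacle is turning the near-matching estimates ``$\#\text{local}\ge 2q-1$'' and ``$\#\text{local}\lesssim 2q$'' into a genuine contradiction for all $a\geq 3$, and this is exactly where the \emph{oddness} of $p=2b+1-2q$ (equivalently, the equal \emph{odd} face degrees $2b+1$) must enter: when the lower bound is tight every crossing class carries exactly two $u,w$-copies, their number $2p\equiv 2\pmod 4$, and the hub-degree parity leaves one $u,w$-copy that no Eulerian class can absorb without either acquiring an odd number of $u,w$-copies or becoming pure at a hub in the sense of Lemma~\ref{lem:connected}. I expect to treat $q=0$ separately from $q\geq 1$, and the most delicate part in each case will be ruling out the length packings in which some super-edge is forced to form a pure class (its two copies alone) or an odd class; the length constraint ``each class has length exactly $a$'' is what makes over-long or isolated super-edges unplaceable. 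The contrast with the realizable families of Section~4 is precisely that there the analogue of $p$ is even, so this parity obstruction disappears.
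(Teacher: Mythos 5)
Your structural reduction coincides with the paper's: a connected realization of $\bft$ is a subdivision of a two-hub multigraph with an odd number $p=2c+1$ of hub-to-hub paths and $q=b-c$ cycles at each hub; multiplicities are constant along each path/cycle (the paper's Lemma~\ref{l0}); the Eulerian condition forces each class to meet each hub in an even number of path-copies; and the classes split into crossing and local ones with $\#\mathrm{crossing}\le p$. However, the two steps you yourself flag as ``the main obstacle'' and ``the most delicate part'' are precisely the content of the proof, and your proposal supplies neither. For the elimination of cycles (the paper's Lemma~\ref{l1.5}), the contradiction is \emph{not} numerical: as you observe, $2q-1\le\#\mathrm{local}\le 2q$ is consistent. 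The paper instead notes that some hub, say $v_1$, carries at least $q=b-c$ local classes, each needing at least two of the $2q$ loop-copies there; hence these classes absorb \emph{all} loop-copies at $v_1$, so every edge of $H^{\mathcal{E}}_{v_1}$ joins either two local colors or two crossing colors, and $H^{\mathcal{E}}_{v_1}$ is disconnected. No parity of $p$ enters at this stage.

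The remaining case $q=0$ is where your outline breaks down. Your assertion that ``when $q=0$ there are no local classes at all, so the two bounds collide'' is false: the lower bound $2q-1=-1$ is vacuous and counting gives no contradiction. Likewise, the leftover pair of path-copies \emph{can} be absorbed by an Eulerian, non-pure class: summing $D_j=\sum_{e\ni v_1}\mu_j(e)$ over the $2b$ classes gives $2(2b+1)$, forcing $D_j=2$ for all classes except one with $D_1=4$, and that class may consist of one doubled path plus two single ones, or of four single paths --- neither odd nor pure in the sense of Lemma~\ref{lem:connected}. Ruling these out is the heart of the paper's argument: an auxiliary graph $\Gamma$ on the $2b+1$ path indices, with an edge for each two-path class, has $2b-1$ edges; connectivity of $H^{\mathcal{E}}_{v_1}$ forces these edges to form one path (resp.\ two paths) joining the odd-degree vertices of $\Gamma$; each edge of $\Gamma$ encodes $|E(Q_k)|+|E(Q_\ell)|=a$, and since the relevant path in $\Gamma$ has an odd number of edges, a telescoping sum yields $|E(Q_i)|=0$ for some $i$, a contradiction. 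The oddness of $2b+1$ enters exactly here, through the parity of the number of edges of $\Gamma$, not through the crossing/local count. Without this (or an equivalent) argument, your proposal is an accurate plan of attack but not a proof.
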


\begin{proof}
By Theorem~\ref{thm:geographic-degree-sequences}, it suffices to show that for every connected realization $G$ of $\bft$ the double graph $\GG$ does not have any locally connected Eulerian \hbox{$\bfd$-partition}. We follow the proof strategy of Proposition~\ref{thm:chi=2-example-3}, except that this time we are proving non-realizability, and the family of possible connected realizations is more general.

Consider an arbitrary connected realization $G=(V,E)$ of the degree sequence $\bft$. This is a graph on $ab-2b+1$ vertices, where the first two vertices, $v_1$ and $v_2$, have degrees $2b+1$, and all other vertices are of degree $2$. Thus such a graph consists of an odd number, say $2c+1$, of paths between $v_1$ and $v_2$, and some cycles at each of these vertices. These paths and cycles are otherwise vertex disjoint. Furthermore, since the degrees of $v_1$ and $v_2$ are both $2b+1$, the number of cycles at $v_1$ and $v_2$ are both equal to $b-c$. Thus we must have $0\leq c\leq b$. See Fig.~\ref{fig:graph-6-1} for an example of such a graph.

\medskip
\begin{figure}[h!]
	\begin{center}
		\includegraphics[width=0.7\textwidth]{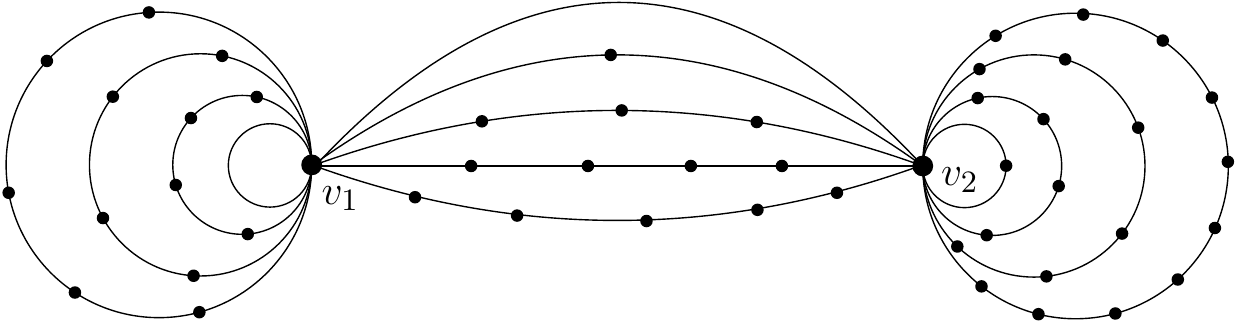}
	\end{center}
	\caption{An example of a connected realization of the degree sequence $\bft$, with
		$a = 10$, $b = 6$, and $c = 2$.}\label{fig:graph-6-1}
\end{figure}

Let us denote the cycles at $v_1$ by $P_1,\ldots, P_{b-c}$,
the $v_1$,$v_2$-paths by $Q_1,\ldots, Q_{2c+1}$, and the cycles at $v_2$ by $R_{1},\ldots,R_{b-c}$.
Furthermore, let
$$\mathcal{W} = \{P_1,\ldots, P_{b-c},Q_1,\ldots, Q_{2c+1},R_{1},\ldots,R_{b-c}\}\,.$$

Assume indirectly that the double graph $\GG$ has a locally connected Eulerian $\bfd$-partition
$\mathcal{E} = \{E_1,\ldots, E_{2b}\}$. Then
\begin{equation}\label{e:color-partition}
\textrm{$\mathcal{E}$
is a partition of the edge set of $\GG$.}\end{equation}
Furthermore, since  $\bfd=(a,a,\ldots,a)\in \ZZ_+^{2b}$, we have
\begin{equation}\label{e:size}
|E_j| = a ~~~\text{ for all }j\in \{1,\ldots, 2b\}\,.
\end{equation}
edge $e\in E$, let us denote by $\mu_j(e)$ the size of the intersection
For each $j\in \{1,\ldots, 2b\}$ and each edge $e\in E$, let $\mu_j(e)\in \{0,1,2\}$ denote the number of copies of $e$ in $\GG$ that belong to color class $E_j$.

Applying the same arguments as in the proof of Lemma~\ref{l0-new} (which corresponds to the case $b = 1$)
leads to the following.

\begin{lemma}\label{l0}
Let $W\in \mathcal{W}$ and let $e$ and $e'$ be two edges in $W$. Then
\begin{equation}\label{e1}
\mu_j(e)=\mu_j(e') ~~~\text{ for all }~~~ j=1,\ldots,2b\,.
\end{equation}
\end{lemma}

To state the next lemma, we will need the following notation.
For all $j\in \{1,\ldots, 2b\}$, we denote by $\textrm{supp}(E_j)$ the {\it support of class $E_j$}, that is,
the set of edges $e$ in $G$ such that $\mu_j(e)\geq 1$.

\begin{lemma}\label{l0.5}
If the color class $E_j$ contains a copy of an edge from some $W\in \mathcal{W}$,
then $W$ is a subset of
$\textrm{supp}(E_j)$.
\end{lemma}

\begin{proof}
Immediate from the previous lemma.
\end{proof}

\begin{lemma}\label{l1}
For any of the color classes $E_j$
at least two different paths/cycles are in $\textrm{supp}(E_j)$.
\end{lemma}

\begin{proof}
	Since $|E_j| =a>0$ for all $j\in \{1,\ldots, 2b\}$, Lemma~\ref{l0.5} implies that for each color class $E_j$, the set $\textrm{supp}(E_j)$ contains, as a subset, at least one member of $\mathcal{W}$.

	Assume indirectly that $\textrm{supp}(E_j)=W$ for some $W\in \mathcal{W}$. By symmetry, we may assume that vertex $v_1$ is incident with an edge in $W$. By \eqref{e1} we have either $\mu_j(e)=1$ for all edges $e\in W$ or $\mu_j(e)=2$ for all edges $e \in W$.
	
	In the first case this implies $|W| = |\textrm{supp}(E_j)| = |E_j| = a$, and thus, since $\mathcal{E}$ is a partition of the edges of $\GG$, conditions~\eqref{e:size} and~\eqref{e1} imply that we must have another color $j'$ such that $\textrm{supp}(E_{j'})=W$. Since the degree of $v_1$ in $G$ is $2b+1\geq 3$, there is an edge incident with $v_1$ that does not belong to $\textrm{supp}(E_j)\cup \textrm{supp}(E_{j'}) = W$. Thus colors $j$ and $j'$ would not be connected in $H^{\mathcal{E}}_{v_1}$ to the other colors at vertex $v_1$.
	
	In the latter case, $a=2|W|$, and color $j$ is not connected to the other colors at vertex $v_1$.
	
	In both cases we get a contradiction with the connectedness of the graph  $H^{\mathcal{E}}_{v_1}$.
\end{proof}

\begin{lemma}\label{l1.5}
We have $c = b$, or, in other words, there are no cycles $P_i$ or $R_j$.
\end{lemma}

\begin{proof}
Recall that $c\le b$. Assume that $c<b$, and consider all color classes that contain some of the paths between $v_1$ and $v_2$.
We claim that we can have at most $2c+1$ such color classes. By Lemma~\ref{l0.5} every color class is the union of copies of paths and cycles (from $\mathcal{W}$), hence, by the Eulerian property every color class must contain an even number of path copies.
So, those that do contain some, must contain at least 2. Since we have only $2(2c+1)$ such copies,
condition~\eqref{e:color-partition} implies we cannot have more than $2c+1$ such color classes.

It follows that we have at least $2b-(2c+1)=2(b-c)-1$ color classes that contain only cycles. Since each such color class $E_j$ is Eulerian, its support is connected, and hence contains only cycles incident with one of the two vertices $v_1$ and $v_2$.
Thus at least one of these vertices, say $v_1$, is incident with at least $(b-c)$ color classes that involve only cycles through this vertex. Thus by the fact that we have exactly $(b-c)$ cycles through vertex $v_1$, by Lemma~\ref{l1}, and by the fact that the graph $H_{v_1}^{\mathcal E}$ is connected, we must have exactly $(b-c)$ such color classes, each containing exactly two cycles (with multiplicities one). This implies that these $(b-c)$ color classes cover all edges that are in the cycles incident with vertex $v_1$. Since this vertex has some additional edges in the odd number of paths connecting it to vertex $v_2$, those are also covered by some color classes, which however cannot be connected in the graph  $H_{v_1}^{\mathcal E}$ to the $(b-c)$ color classes covering the cycles. This contradicts the connectedness of $H_{v_1}^{\mathcal E}$.
\end{proof}

Since $c=b$, the graph consists of $2b+1$ edge-disjoint paths connecting vertices $v_1$ and $v_2$.
For each $j \in \{1,\ldots, 2b\}$, let $D_j = \sum_{e\ni v_1}\mu_j(e)$.
Since each color class is Eulerian, the numbers $D_j$ are all positive and even.
Furthermore,
$$\sum_{j = 1}^{2b}D_j =
\sum_{j = 1}^{2b}\sum_{e\ni v_1}\mu_j(e) =
\sum_{e\ni v_1}\sum_{j = 1}^{2b}\mu_j(e) = 2(2b+1)\,,$$
where the last equality follows from condition~\eqref{e:color-partition}.
Consequently, $D_j = 2$ for all but one of the color classes, say $j = 1$, for which
$D_1 = 4$. By Lemma~\ref{l1}, the support of each color class $E_j$ such that $D_j = 2$
contains exactly two of the $2b+1$ paths and $\mu_j(e) = 1$ for each edge $e\in \textrm{supp}(E_j)$.
For $j = 1$, we have three possible cases up to renaming the paths:
\begin{enumerate}[(a)]
  \item The support of color class $E_1$ contains two paths both with multiplicities equal to $2$.

  \item The support of color class $E_1$ contains path $Q_1$ with multiplicity $2$ and paths $Q_2$ and $Q_3$ with multiplicity $1$.

  \item The support of color class $E_1$ contains exactly $4$ of the $2b+1$ paths, say
  $Q_1$, $Q_2$, $Q_3$, and $Q_4$, each with multiplicity $1$.
\end{enumerate}

Case (a) is not possible, as color $1$ is not connected in $H^{\mathcal{E}}_{v_1}$ to the other colors, contradicting the
connectedness of $H^{\mathcal{E}}_{v_1}$. (Note that the total number of color classes is $2b>1$.)

In case (b) equality~\eqref{e:size} implies the following equality:
\begin{equation}\label{e3}
2|E(Q_1)|+|E(Q_2)|+|E(Q_3)|=a.
\end{equation}
Let us create an auxiliary graph $\Gamma$ with vertex set $\{1,\ldots, 2b+1\}$ (the indices of the paths)
such that for every color $j\neq 1$ such that the support  of $E_j$
contains exactly $Q_k$ and $Q_\ell$ we connect $k$ and $\ell$ with an edge.
Graph $\Gamma$ has $2b+1$ vertices and $2b-1$ edges;
vertex $1$ has degree $0$, vertices $2$ and $3$ have degree $1$, and all other vertices have degree $2$.
Since $2$ and $3$ are the only vertices of odd degree in $\Gamma$, they are connected by a path $\Pi$. We claim that $\Pi$ contains all $2b-1$ edges of $\Gamma$. Otherwise, the set of colors corresponding to the edges of $\Pi$, together with color $1$, are disconnected from the rest of the colors in $H^{\mathcal{E}}_{v_1}$, contradicting the
connectedness of $H^{\mathcal{E}}_{v_1}$. Since for every edge $\{k,\ell\}$ in $\Gamma$ we have $|E(Q_k)|+|E(Q_\ell)| = a$ by equality~\eqref{e:size} and $\Pi$ contains an odd number of edges, it follows that $|E(Q_2)|+|E(Q_3)|=a$. Thus, $|E(Q_1)|=0$ by \eqref{e3}, a contradiction.

Finally, consider case (c). We must have
\begin{equation}\label{e2}
|E(Q_1)|+|E(Q_2)|+|E(Q_3)|+|E(Q_4)|=a.
\end{equation}
Let us again consider the auxiliary graph $\Gamma$ as defined above.
Then in this graph every vertex has degree $2$, except vertices $1$, $2$, $3$, and $4$, which have degree equal to $1$. Thus the graphs consist of two paths, say one connecting $1$ to $2$, and the other connecting $3$ to $4$. Similarly as above, these two paths contain all $2b-1$ edges of $\Gamma$. Otherwise, the set of colors corresponding to the edges on these two paths, together with color $1$, are disconnected from the rest of the colors  in $H^{\mathcal{E}}_{v_1}$, contradicting the
connectedness of $H^{\mathcal{E}}_{v_1}$. Since $\Gamma$ has $2b-1$ edges, one of these paths, say the path connecting $1$ to $2$, has an odd number of edges. Thus, similarly as in case (b), we infer that $|E(Q_1)|+|E(Q_2)|=a$. By \eqref{e2} we get $|E(Q_3)|=|E(Q_4)|=0$, a contradiction.
\end{proof}

The non-realizable examples given by Theorem~\ref{thm:chi=1} are not the only non-realizable pairs for the projective plane. Our computer search of small non-realizable pairs (see Algorithm~\ref{algo} in Appendix) revealed that the two degree pairs $(3,3,3,3;7,4,1)$ and $(3,3,3,3;5,4,3)$ (both of which have $\chi = 1$) are also not realizable.

\section{Directions for future research}

\subsection{Realizability for $\chi\le 0$}

A pair  $(\bfd;\bft)$ of integer sequences $\bfd=(d_1,\ldots,d_n)\in\ZZ_+^n$ and $\bft=(t_1,\ldots,t_m)\in\ZZ_+^m$ is said to be \emph{feasible} if  $\sum_{i = 1}^nd_i = \sum_{j = 1}^mt_j$.
Recall that a pair $(\bfd;\bft)$ is said to be realizable if it is the degree sequence of a geographic plan.
While every realizable pair is feasible, we gave in Sections~\ref{sec:non-realizable-on-sphere} and~\ref{sec:non-realizable-on-projective-plane} several families of examples of feasible pairs that are not realizable.
Note that all those exceptions are about the sphere, $\chi = 2$, and the projective plane, $\chi = 1$.
The following conjecture is about $\chi\le 0$.

\begin{conjecture}\label{conj:realizability}
Every feasible pair $(\bfd;\bft)$ with $\chi(\bfd;\bft)\leq 0$ is realizable.
\end{conjecture}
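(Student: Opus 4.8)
The plan is to reduce the conjecture, via Theorem~\ref{thm:geographic-degree-sequences}, to a purely combinatorial construction on double graphs and then to exploit the slack afforded by the hypothesis. Writing $2\ell=\sum_i d_i=\sum_j t_j$, the assumption $\chi(\bfd;\bft)=n-\ell+m\leq 0$ is equivalent to $\ell\geq n+m$, so any realizing plan has far more edges than vertices and faces combined. This is precisely the regime opposite to the exceptions of Sections~\ref{sec:non-realizable-on-sphere} and~\ref{sec:non-realizable-on-projective-plane}, all of which stem from a shortage of edges that makes local connectivity impossible to satisfy (cf.~Lemma~\ref{lem:connected}). By the equivalence of $a)$ and $c)$ in Theorem~\ref{thm:geographic-degree-sequences} it suffices to exhibit one connected realization $G$ of $\bfd$ whose double graph $\GG$ admits a locally connected Eulerian $\bft$-partition, and by the equivalence of $a)$ and $d)$ we are free to interchange $\bfd$ and $\bft$ and work with whichever is more convenient. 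As a warm-up I would settle the case $n=1$: then $\GG$ consists of $2\ell$ loop-copies at a single vertex, so every color class is automatically connected and of even degree, and the problem collapses to realizing $\bft$ as a connected multigraph on $m$ vertices with $\ell$ edges --- possible exactly because $\ell\geq m-1$. This toy case already isolates the mechanism I expect to drive the general argument: prescribing the sizes $t_j$ is a degree-constrained distribution of edge-copies, local connectivity is connectivity of an auxiliary multigraph, and the surplus $\ell-(n+m)\geq 0$ supplies the edges needed to connect it up.

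For the general case I would set up an induction that peels off easy structure while preserving both feasibility and the inequality $\chi\leq 0$. Two reductions look promising. Suppressing a vertex of degree $2$ (an entry $d_i=2$) merges its two incident edges into one and decreases both $n$ and $\ell$ by $1$, leaving $\chi$ unchanged; one then only has to track the induced change of $\bft$ on the dual side. Merging two vertices decreases $n$ by $1$ while fixing $\ell$ and $m$, hence strictly decreases $\chi$ and keeps it nonpositive. The difficulty with the latter is that its inverse, splitting a vertex of a geographic plan, is unconstrained and need not respect the prescribed degrees. For this reason I would keep the induction constructive rather than treat it as a black box: reduce to a small \emph{core} instance, dispose of it by hand or by the base case $n=1$, and then reattach the suppressed degree-$2$ chains and the surplus edges one at a time, restoring the Eulerian and local-connectivity conditions at each step. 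The bound $\ell\geq n+m$ should guarantee that throughout the process every vertex remains incident with enough differently colored edge-copies to keep its auxiliary graph $H_v^{\mathcal{E}}$ connected.

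The main obstacle, in every version of this plan, is local connectivity. The size constraints $|E_j|=t_j$ form a transportation-type condition, and the Eulerian (even, connected) requirement on each color class is a parity-plus-connectivity condition that is easy to arrange once enough edges are available; both are essentially counting conditions. By contrast, local connectivity is a genuinely global adjacency requirement on the whole family $\{H_v^{\mathcal{E}}\}$, and it is exactly what fails, through Lemma~\ref{lem:connected}, in all of our non-realizable examples. The crux is therefore a quantitative statement of the form: whenever the data are feasible and $\ell\geq n+m$, the edge-copies of $\GG$ can be partitioned into connected even classes of the prescribed sizes so that at each vertex $v$ the colors present are linked by the pairings of the edge-copies at $v$. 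I would try to prove such a statement either by an exchange argument --- starting from an arbitrary Eulerian $\bft$-partition and repairing any disconnection of some $H_v^{\mathcal{E}}$ by local recolorings that spend the surplus --- or by a direct greedy construction that maintains a connected color graph at each vertex as an invariant. Showing that the surplus $\ell-(n+m)$ always suffices for these repairs, uniformly over all feasible $\bfd$ and $\bft$, is the step I expect to be hardest, and it is exactly where the transition from $\chi=1$ to $\chi\leq 0$ must be explained.
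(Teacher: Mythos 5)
The statement you are addressing is stated in the paper as Conjecture~\ref{conj:realizability}; the paper offers no proof of it (only the unproved remark that realizability holds when all entries of $\bfd$ and $\bft$ are at least $10$), so there is no argument of ours to compare yours against. More importantly, your proposal is not a proof either: it is a research plan whose decisive step is explicitly left open. You correctly identify, via Theorem~\ref{thm:geographic-degree-sequences}, that the task is to produce one connected realization $G$ of $\bfd$ together with a locally connected Eulerian $\bft$-partition of $\GG$, and you correctly isolate local connectivity as the binding constraint. But the claim that ``the surplus $\ell-(n+m)\geq 0$ supplies the edges needed'' is never substantiated: you give no exchange lemma, no invariant for the greedy construction, and no quantitative bound showing that a disconnection of some $H_v^{\mathcal{E}}$ can always be repaired without violating the prescribed class sizes $t_j$, the evenness of each class, or local connectivity at a neighbouring vertex. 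Since the non-realizable families of Sections~\ref{sec:non-realizable-on-sphere} and~\ref{sec:non-realizable-on-projective-plane} show that such repairs genuinely fail for $\chi\in\{1,2\}$, the transition at $\chi=0$ is exactly what a proof must explain, and your text acknowledges rather than resolves this.

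There are also local inaccuracies worth flagging. In your $n=1$ warm-up, the Eulerian condition is indeed automatic (every class consists of loop-copies at the single vertex), but local connectivity of $H_v^{\mathcal{E}}$ is not: it requires choosing the partition so that enough edges $e$ have their two copies in \emph{distinct} classes, and whether this is compatible with the prescribed sizes $t_1,\dots,t_m$ (some of which may equal $1$, forcing certain splits) needs an argument, not just the inequality $\ell\geq m-1$. The degree-$2$ suppression step likewise needs care: suppressing a vertex of degree $2$ in $G$ changes $\ell$, hence changes the required total $\sum_j t_j$ on the dual side by $2$, and it is not specified which entry of $\bft$ absorbs this or why the resulting pair remains feasible with all $t_j\geq 0$. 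As it stands, the proposal is a reasonable outline of how one might attack the conjecture, but it does not constitute a proof, and the conjecture remains open.
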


In other words, Conjecture~\ref{conj:realizability} states that for every two integer sequences $\bfd=(d_1,\ldots,d_n)\in\ZZ_+^n$ and $\bft=(t_1,\ldots,t_m)\in\ZZ_+^m$ such that
$\sum_{i = 1}^nd_i = \sum_{j = 1}^mt_j \geq 2(n+m)$, pair $(\bfd;\bft)$
is the degree sequence of a geographic plan.
Using Theorem~\ref{thm:geographic-degree-sequences}, it is not difficult to obtain the following partial result in the support of Conjecture~\ref{conj:realizability}: for every two integer sequences $\bfd$ and $\bft$ as above in which
all entries are at least $10$,
pair $(\bfd;\bft)$ is realizable.

Given a sequence pair $(\bfd;\bft)$ with
$\bfd=(d_1,\ldots,d_n)\in\ZZ_+^n$ and $\bft=(t_1,\ldots,t_m)\in\ZZ_+^m$,
our technique does not distinguish between realizability of $(\bfd;\bft)$ on orientable
or non-orientable surface of the same Euler characteristic $\chi(\bfd;\bft)$; in particular,
it does not distinguish between the torus and the Klein bottle.
Using a different approach, Nikolai Adrianov obtained
the following result in 2017 (private communications).

\begin{sloppypar}
\begin{proposition}\label{prop:Klein-not-torus}
The following pairs of integer sequences are not realizable on the torus:
$(n,n;5,3,2^{n-4})$ for all integers $n\geq 4$, and
$(n,n,n,n;7,5,2^{2n-6})$ for all integers $n\geq 3$.
\end{proposition}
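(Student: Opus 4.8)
The plan is to exploit the fact that both families have $\chi(\bfd;\bft)=0$, so by~\eqref{e:chi-surfaces} any map realizing such a pair lies on one of the two surfaces of Euler characteristic $0$: the torus $S_1$ (orientable) or the Klein bottle $C_2$ (non-orientable). Since Propositions~\ref{prop:nn532n-4} and~\ref{prop:nnnn752n-6} already exhibit realizations, the content of the statement is purely about \emph{orientability}: I must show that no map with the prescribed vertex-degree multiset $\bfd$ and face-degree multiset $\bft$ is orientable. I would phrase this using the word-representation criterion from Section~\ref{sec:preliminaries}: a realization corresponds to gluing one polygon $Q_f$ per face (a $t_f$-gon) along the edge pairing so that the corners fall into the vertex classes prescribed by $\bfd$, and the map is orientable precisely when the polygons can be oriented so that each edge occurs once as $a$ and once as $\overline a$. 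Equivalently, working on the primal side, an orientable realization is a rotation system on a connected realization $G$ of $\bfd$ whose face-tracing permutation has cycle type $\bft$; here~\eqref{e:chi} forces the orientable genus to equal $1$, so I only have to decide \emph{existence}.

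A first point to record is that the obvious parity invariant is useless. Writing an oriented rotation system as a triple $(\sigma,\alpha,\varphi)$ with $\sigma\alpha=\varphi$, where $\sigma$ has cycle type $\bfd$, $\alpha$ is a fixed-point-free involution, and $\varphi$ has cycle type $\bft$, one checks directly that $\operatorname{sgn}\sigma\cdot\operatorname{sgn}\alpha=\operatorname{sgn}\varphi$ holds automatically for both families. Thus the obstruction must be structural rather than a congruence, which is exactly what makes the statement delicate. The main idea is then a reduction that strips off the degree-$2$ faces one at a time. In an orientable map a face of degree $2$ cannot be bounded by a single edge (a short computation with $\varphi=\sigma\alpha$ rules out a $2$-cycle of $\varphi$ supported on one edge), so every degree-$2$ face is a genuine \emph{bigon} bounded by two distinct parallel edges sharing both endpoints. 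Contracting such a bigon — identifying the two parallel edges and deleting the face — preserves orientability and the surface, removes one edge and one (degree-$2$) face, and lowers the degrees of the two incident vertices by $1$, while leaving the odd faces (of degrees $5,3$, resp.\ $7,5$) intact.

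Iterating the contraction, I would reduce the family-1 problem to the base case $\bfd=(4,4)$, $\bft=(5,3)$ (that is, $n=4$) and the family-2 problem to $\bfd=(3,3,3,3)$, $\bft=(7,5)$ (that is, $n=3$); since contraction preserves the surface, non-realizability on the torus in the base case propagates to all $n$. It then remains to settle the base cases. For $\bfd=(4,4)$, $\bft=(5,3)$ the argument is short: a connected realization of $(4,4)$ that admits odd faces cannot be bipartite, hence cannot be the dipole, so it must carry one loop at each vertex; and because neither face has degree $1$, no loop may bound a monogon, i.e.\ the two darts of each loop are non-adjacent in the rotation at their vertex. With only four darts per vertex this pins down the rotation at each vertex up to symmetry, and a direct face-tracing then yields face degrees $(4,4)\neq(5,3)$, a contradiction. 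By Theorem~\ref{thm:geographic-degree-sequences} this rules out torus realizations for all $n$ in the first family. The base case $\bfd=(3,3,3,3)$, $\bft=(7,5)$ I would treat by the same philosophy, or by a finite enumeration of rotation systems over the finitely many connected realizations of $(3,3,3,3)$.

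The hard part will be twofold. First, the bookkeeping in the reduction is not quite uniform: a degree-$2$ face could instead be a bigon bounded by two loops at a single vertex (lowering one vertex-degree by $2$), or its two outer sides could border the \emph{same} face (shifting an odd face-degree by $1$); handling these degeneracies forces a slightly larger, but still finite, list of base configurations to verify. Second, and more seriously, the base case itself is where the real obstruction lives: since the sign computation gives nothing, one genuinely has to rule out the existence of the permutation triple by a structural or exhaustive argument. This orientability core — showing that the interlacement pattern forced by two equal vertex-degrees and two nearly-equal odd faces is incompatible with genus $1$ — is presumably where Adrianov's ``different approach'' does its work, and extending the clean $n=4$ argument uniformly across all of the second family is the step I would expect to be the most laborious.
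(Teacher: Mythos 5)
You should first note that the paper contains no proof of Proposition~\ref{prop:Klein-not-torus}: it is stated as a result of N.~Adrianov (2017, private communication), obtained ``using a different approach,'' precisely because the paper's own technique (Theorem~\ref{thm:geographic-degree-sequences}) cannot distinguish orientable from non-orientable surfaces of the same Euler characteristic. So your attempt can only be judged on its own terms. Your framing is correct: since $\chi(\bfd;\bft)=0$ and Propositions~\ref{prop:nn532n-4} and~\ref{prop:nnnn752n-6} supply realizations, the claim is exactly that no genus-$1$ rotation system has these vertex- and face-degree multisets, and the permutation-triple model $(\sigma,\alpha,\varphi)$ with $\varphi=\sigma\alpha$ is the right language. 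Your base case $(4,4;5,3)$ is also correct: the dipole is bipartite and so has only even faces, forcing the realization to be two parallel edges plus a loop at each vertex; excluding monogons makes the two darts of each loop antipodal in its rotation, and both resulting rotation systems trace faces of degrees $(4,4)$.

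The genuine gap is in the reduction. Contracting a degree-$2$ face preserves the surface, but it subtracts $1$ from each of two vertex degrees only when the bigon joins two \emph{distinct} vertices; a bigon formed by two loops at one vertex subtracts $2$ from a single degree. Hence the first family does not reduce to $(4,4;5,3)$ alone but to the set of pairs $\{(4,4),(5,3),(6,2),(7,1)\}$ each matched with $(5,3)$, and the second family reduces to \emph{all} partitions of $12$ into four positive parts matched with $(7,5)$ --- fifteen base pairs, each admitting several connected multigraph realizations and many rotation systems. You flag this degeneracy yourself, but none of the additional cases is verified, and the second family's base analysis is deferred entirely to ``the same philosophy, or a finite enumeration.'' Until that finite but substantial verification is carried out, the argument is a plausible strategy rather than a proof. (Also, the outer faces of a contracted bigon do not change degree even when they coincide, so that particular degeneracy you worry about does not arise; the loop-bigon one does.) A smaller slip: citing Theorem~\ref{thm:geographic-degree-sequences} to conclude torus non-realizability is inapposite, since that theorem concerns realizability on \emph{some} surface and carries no orientability information; your rotation-system computation is what does the work and is what should be invoked.
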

\end{sloppypar}

By Propositions~\ref{prop:nn532n-4} and~\ref{prop:nnnn752n-6}, pairs $(n,n;5,3,2^{n-4})$ are realizable for all integers $n\geq 4$, and pairs $(n,n,n,n;7,5,2^{2n-6})$ are realizable for all integers $n\geq 3$.
Since they are not realizable on the torus, they must be realizable on the Klein bottle.
On the other hand, we are not aware of any degree sequences of geographic plans
realizable on the torus but not on the Klein bottle.
More generally, we pose the following.

\begin{conjecture}\label{conj:non-orientable-realizability}
	Every feasible pair $(\bfd;\bft)$ with $\chi(\bfd;\bft)\leq 0$
	can be realized on a non-orientable surface.
\end{conjecture}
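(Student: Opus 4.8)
The plan is to split on the parity of the Euler characteristic. By~\eqref{e:chi-surfaces}, for a fixed $\chi\le 1$ the non-orientable surface $C_{2-\chi}$ always exists, whereas the orientable surface $S_{1-\chi/2}$ exists only when $\chi$ is even. Hence for odd $\chi\le -1$ the only surface of that Euler characteristic is non-orientable, so a feasible pair is realizable on a non-orientable surface precisely when it is realizable at all; in this range Conjecture~\ref{conj:non-orientable-realizability} coincides with Conjecture~\ref{conj:realizability} and carries no extra content. The genuinely new difficulty is therefore confined to even $\chi\le 0$, where $S_{1-\chi/2}$ and $C_{2-\chi}$ share the same Euler characteristic and one must force realizability specifically on $C_{2-\chi}$.

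For even $\chi\le 0$ I would factor the argument into two stages. First, produce \emph{some} geographic plan $P=(G,H)$ with degree sequence $(\bfd;\bft)$; by Theorem~\ref{thm:geographic-degree-sequences} this amounts to choosing a connected realization $G$ of $\bfd$ together with a locally connected Eulerian $\bft$-partition of the double graph $\GG$, and there is usually considerable freedom in this choice. (This first stage is exactly the substance of Conjecture~\ref{conj:realizability} for even $\chi$; it is attackable through the explicit constructions of Section~\ref{sec:degree-sequences}, Propositions~\ref{prop:nn532n-4}--\ref{prop:n2kak}, and the large-entry partial result.) Second, I would show that a suitable such $P$ is generated by a map on the non-orientable surface $C_{2-\chi}$. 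The crucial structural fact is that a single geographic plan can be generated by maps on different surfaces -- as in Example~11 of Section~\ref{sec:Edmonds}, where one plan is realized on both the torus and the Klein bottle -- because the surface output by the map-building construction depends on the chosen Eulerian trails $C_f$ in the face graphs $G_f$ and on the orientations assigned to the polygon sides.

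Concretely, I would attempt a local surgery. Starting from an orientable generating map, that is, one whose polygons can be oriented so that each letter $a$ occurs once as $a$ and once as $\overline a$, I would locate a vertex $v$ incident with two polygon sides glued in an orientation-preserving fashion and re-glue them with a twist, compensating elsewhere so that $\chi$, $\bfd$, and $\bft$ are all preserved. The model for such a trade is Dyck's relation $S_1\mathbin{\#}C_1\cong C_3$: wherever the plan carries enough local freedom -- a doubled edge, a loop at $v$, or a vertex of degree at least $4$ on a single face-walk -- one expects to replace an orientable local gluing pattern by a non-orientable one realizing exactly the same vertex--face incidences, hence the same bimatrix. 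Since the bimatrix is unchanged, the modified object is still a map generating $P$ by Theorem~\ref{thm:Edmonds}, but now its word representation contains a letter forced to appear twice with the same orientation, a crosscap that cannot be removed by operations a)--c), so the global surface has become non-orientable.

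The hardest part will be the degenerate low-degree configurations, where the map generating $P$ is essentially unique and so pins down a single surface. This is precisely the phenomenon behind Proposition~\ref{prop:Klein-not-torus}: the families $(n,n;5,3,2^{n-4})$ and $(n,n,n,n;7,5,2^{2n-6})$ are realizable but \emph{not} on the torus, forcing their realizations onto the Klein bottle -- evidence that the non-orientable realization is the robust one. The critical step is therefore to prove that for even $\chi\le 0$ one can always select, among the many realizations permitted by Theorem~\ref{thm:geographic-degree-sequences}, a connected $G$ and a $\bft$-partition whose associated plan is \emph{not} simple (so that its generating surface is not uniquely determined) and which admits the required local twist. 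Establishing this uniformly across all feasible even-$\chi$ pairs -- in particular those dominated by entries equal to $2$, where a cycle realization of $\bfd$ leaves little maneuvering room -- is where I expect the real obstacle to lie, and where a careful case analysis mirroring the constructions of Section~\ref{sec:degree-sequences} seems most promising.
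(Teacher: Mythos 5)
The statement you are addressing is posed in the paper as a \emph{conjecture}; the paper contains no proof of it, so there is nothing on the paper's side to match your argument against. What you have written is a research plan rather than a proof, and it has two genuine gaps. First, your ``stage one'' --- producing some geographic plan with degree sequence $(\bfd;\bft)$ for every feasible pair with $\chi\le 0$ --- is precisely Conjecture~\ref{conj:realizability}, which is itself open; the paper only establishes it for special families (Propositions~\ref{prop:nn532n-4}--\ref{prop:n2kak}) and for pairs with all entries at least $10$. Resting stage two on stage one therefore conditions your entire argument on an unproven statement, and your own reduction of the odd-$\chi$ case to Conjecture~\ref{conj:realizability} inherits the same dependence.

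Second, and more seriously, the ``local surgery'' in stage two is not carried out, and the step you wave past is exactly where it would break. Re-gluing a pair of polygon sides with a twist changes the identifications of the polygon boundary points, hence potentially the number and degrees of the vertices of the resulting map; Examples~9--11 in Section~\ref{sec:Edmonds} show that different orientations of the \emph{same} sides of the \emph{same} polygons can produce objects that fail to be maps generating the plan at all (vertices that should be distinct get merged, or one plan vertex splits into several surface points). So ``the bimatrix is unchanged, hence the modified object still generates $P$'' does not follow from Theorem~\ref{thm:Edmonds}: what Theorem~\ref{thm:Edmonds} guarantees is that \emph{some} choice of Eulerian trails and orientations yields a valid map, not that an arbitrary twist of a valid one remains valid. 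You would need to exhibit, for every feasible even-$\chi$ pair, a concrete realization admitting a twist that (a) keeps the word representation valid for $P$ and (b) forces non-orientability --- and you explicitly defer the low-degree cases (cycle realizations of $\bfd$, entries equal to $2$) where this is hardest. Proposition~\ref{prop:Klein-not-torus} is suggestive evidence, as you note, but it covers only two families and was obtained by a different method. In short: the approach is a reasonable heuristic outline consistent with the paper's own evidence for the conjecture, but no step of it is actually established, and the statement remains open.
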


\subsection{A generalization: triangulation of surfaces}

Consider a surface $S$ and an arbitrary map $M$ on $S$ generated
by an embedding of a graph  $G = (V,E)$  into  $S$.
We keep the notation:
$V = \{v_1, \ldots, v_n\}$, $E = \{e_1, \ldots, e_\ell\}$, and
the set of countries is  $F = \{f_1, \ldots, f_m\}$.
Furthermore, $2\ell = \sum_{i=1}^n d_i = \sum_{j=1}^m t_j$, where
$d_i$  and  $t_j$  denote the degrees (that is, the numbers of neighboring vertices and countries)
for $v_i$  and  $f_j$, respectively.
Recall that  $\chi(S) = n - \ell + m$, by Euler's formula.
	
	\smallskip
In each country  $f \in F$  on $S$  let us fix a point
$f'$ and call it the {\em capital} of  $f$.
In the interior of each edge  $e \in E$  let us fix a point $e'$ and call it the {\em checkpoint} of  $e$.
The sets  of all capitals and checkpoints are denoted by
$F'$  and  $E'$  and colored red and green, respectively,
while vertices  of $V$ are colored blue.
Furthermore let us introduce	
	\begin{itemize}
		\item{} a green edge $(v, f')$
		whenever vertex  $v$  belongs to
		the  country  $f$  with capital $f'$;
		\item{} a blue edge $(e', f')$  whenever
		the corresponding country  $f$  and edge  $e$  are incident;
		\item{} a red edge $(v, e')$
		whenever vertex  $v$  is incident in the graph  $G = (V,E)$
		to the edge  $e$  corresponding to  $e'$.
	\end{itemize}
Thus, we  obtain a $3$-colored triangulation  $T$  of  $S$
on the vertex set  $V \cup E' \cup  F'$.
Such triangulations were introduced by Shabat and Voevodsky in 1990~\cite{MR1106916};
see figures on pages 203 and 209.
(Note that the dual maps appeared already in 1984 in a preprint by Grothendieck,
which was published in 1997~\cite{MR1483107}.)
	
\medskip	
Note that, by construction,  no vertex and edge of the same color are incident in  $T$.
In other words, the  edges incident to a  vertex of a given color
are colored with two remaining colors, and, by construction, these two colors
alternate on  $S$.  Hence, each vertex of  $T$  has even degree.
Let  $d'_i$, $t'_j,$  and $\delta_k$  denote the degrees in  $T$
of the vertices  $v_i \in V, f'_j \in F'$, and  $e'_k \in E'$  divided by  $2$, respectively.
The above construction implies
for all  $i \in \{1, \ldots, n\}$, $j \in\{1, \ldots, m\}$, $k \in \{1, \ldots, \ell\}$ we have equalities
$d'_i = d_i$, $t'_j = t_j$, and  $\delta_k = 2$, since the degree of
a checkpoint in $T$ always equals $4$. Hence,
	\begin{equation}
	\label{eq-mnl}
	\sum_{i=1}^n d'_i = \sum_{j=1}^m t'_j = \sum_{k=1}^\ell \delta_k = 2\ell\,.
	\end{equation}	
In \cite{MR3792528} Shabat noticed that one can waive the set of constraints $\delta_k = 2$ for all $k \in \{1, \ldots, \ell\}$ and consider
arbitrary $3$-colored triangulations satisfying all remaining properties considered above.
More precisely, let  $T$  be a triangulation  of  $S$ defined on the vertex set  $V \cup E' \cup  F'$ colorable by $3$ colors.
Furthermore, each node of $T$ is of even degree.
Characterize the trivectors  $({\bf d'}$, ${\bf t'}$, $\boldsymbol{\delta})$ of such triangulations.
In the case  when $\boldsymbol{\delta} = (2, \ldots, 2)$, this problem is reduced to characterizing
pairs $({\bf d}; {\bf t})$ realized as degree sequences of maps and considered in the present paper.
	
As an example, we can mention a result by Adrianov~(private communications) stating that the trivector
$({\bf d'}$, ${\bf t'}$, $\boldsymbol{\delta})$ satisfying~\eqref{eq-mnl}
with ${\bf d'}= {\bf t'} = (3, \ldots, 3)$, and
$\boldsymbol{\delta} = (4,3, \ldots, 3,2)$ cannot be realized on the torus.

\subsection*{Acknowledgements}

The work for this paper was done in the framework of bilateral projects between Slovenia and the USA and between Slovenia and the Russian federation, partially financed by the Slovenian Research Agency (BI-US/$16$--$17$--$030$, BI-US/$18$--$19$--$029$, and BI-RU/$19$--$20$--$022$). The second author gratefully acknowledges the  partial support of the Russian Science Foundation, grant 20-11-20203; this research topic was included in the HSE University Basic Research Program;
he is also grateful to Nikolai Adrianov and George Shabat for helpful discussions. The work of the third author is supported in part by the Slovenian Research Agency (I0-0035, research program P1-0285 and research projects J1-9110, N1-0102, and N1-0160). The third and fourth author gratefully acknowledge the European Commission for funding the InnoRenew CoE project (Grant Agreement \#739574) under the Horizon2020 Widespread-Teaming program and and the Republic of Slovenia (Investment funding of the Republic of Slovenia and the European Union of the European regional Development Fund).

\appendix

\newpage
\section*{Appendix: Computer search of small non-realizable pairs}

\begin{algorithm}[h!]
	\caption{Search for all non-realizable bi-vectors.}\label{algo}
	\KwIn{$\ell$ -- number of edges}
	\KwOut{{\it nonRealizableBivectors} -- the set of non-realizable bi-vectors.}
	\tcp{Notation: $n$ is the number of vertices, $m$ is the number of countries, $B_{G}$ is the original graph represented as a matrix in $\{0,1,2\}^{\ell \times n}$ (0 - no edge at vertex $v$, 1 - edge with endpoint $v$, 2 - loop at $v$), $B_H$ is the dual graph represented as a matrix in $\{0,1,2\}^{\ell \times m}$}
	${\it bivectors} \gets \emptyset$ \nllabel{l1}\\
	${\it allPossibleBivectors} \gets \emptyset$ \nllabel{l2}\\
	\For{$n = 1,\dots, \lfloor \ell/2\rfloor + 1$\nllabel{l3}}{
		${\it lines} \gets$ the set of all vectors in $\{0,1,2\}^n$ with sum of all elements equal to $2$\nllabel{l4} \\
		$\mathcal{B}_G \gets$ the set of all multisubsets of ${\it lines}$ of cardinality $\ell$ \nllabel{l5}\\
		\tcp{each such multisubset represents one matrix $B_G$, by taking the vectors in the multiset to be the rows of $B_G$.}
		$\mathcal{B}_G \gets$ remove from $\mathcal{B}_G$ all matrices $B_G$ representing disconnected graphs (using adjacency lists and BFS) \nllabel{l6}\\
		\ForEach{$B_G \in \mathcal{B}_G$\nllabel{l7}}{
            \For{$i = 1,\dots, n$\nllabel{l71}}{
                $d_i \gets$ sum of the entries in the $i$-th column of $B_G$\nllabel{l72}\\
            }
            ${\it columns} \gets$ the set of all vectors in $\{0,1,2\}^\ell$ and having even scalar product with each column of $B_G$ \nllabel{l8}\\
			\For{$m = n,\dots, \ell+n-2$\nllabel{l9}}{
				${\it temporarySet} \gets$ the set of all pairs $(\bfd;\bft)$ where $\bfd=(d_1,\ldots,d_n)\in\ZZ_+^n$, $\bft=(t_1,\ldots,t_m)\in\ZZ_+^m$, and the sum of all elements in $\bfd$ as well as in $\bft$ is $2\ell$ \nllabel{l15}\\
				${\it allPossibleBivectors} \gets {\it allPossibleBivectors} \cup {\it temporarySet}$\nllabel{lnew}\\
				$\mathcal{B}_H \gets$ the set of all multisubsets of ${\it columns}$ of cardinality $m$ such that all row sums of the corresponding matrix equal to $2$ \nllabel{l10}\\
				\tcp{each such multisubset represents one matrix $B_H$, by taking the vectors in the multiset to be the columns of $B_H$}
				$\mathcal{B}_H \gets$ remove from $\mathcal{B}_H$ all matrices $B_H$ representing disconnected graphs (using adjacency lists and BFS)\nllabel{l11}\\
				\ForEach{$B_H\in \mathcal{B}_H$\nllabel{l12}}{
					${\it bivector} \gets (d_1,\ldots, d_n;t_1,\ldots, t_m)$ where $t_j$ is the sum of the entries in the $j$-th column of $B_H$ for all $j\in \{1,\ldots, m\}$ \nllabel{l13}\\
					${\it bivectors} \gets {\it bivectors}\cup \{{\it bivector}\}$ \nllabel{l14}\\
				}
			}
		}
	}
	${\it nonRealizableBivectors} \gets {\it allPossibleBivectors} \setminus {\it bivectors}$  \nllabel{l16}\\
	\Return{\it nonRealizableBivectors}\; \nllabel{l17}
\end{algorithm}

The source code and the dataset of realizable degree sequences of dual graphs on surfaces are available at Zenodo\footnote{\url{http://doi.org/10.5281/zenodo.3833674}}.

The time complexity of the algorithm can be analyzed as follows.
Lines~\ref{l1}--\ref{l2} take $\mathcal{O}\left(1\right)$ time.
The for loop extending over lines~\ref{l3}--\ref{l14}
has $\mathcal{O}\left(\ell\right)$ iterations.
Each iteration takes the following time:
\begin{itemize}
  \item Line~\ref{l4} takes time $\mathcal{O}\left({n+1\choose 2} n\right)$, since there are
${n+1\choose 2}$ vectEors in the set ${\it lines}$, each of length $n$.
\item Line~\ref{l5} takes time $\mathcal{O}\left({{n+1\choose 2}+\ell-1\choose  \ell} \ell n\right)$, since there are
${{n+1\choose 2}+\ell-1\choose  \ell}$ multisubsets in $\mathcal{B}_G$, each of which can be stored in $\mathcal{O}\left(\ell n\right)$ space.
\item The connectedness check in line~\ref{l6} can be done for each $B_G$ in time $\mathcal{O}\left(\ell n\right)$, hence
the time complexity of line~\ref{l6} is altogether $\mathcal{O}\left({{n+1\choose 2}+\ell-1\choose  \ell} \ell n\right)$.
\item The for loop extending over lines~\ref{l7}--\ref{l14}
has $\mathcal{O}\left(\binom{{n+1\choose 2}+\ell-1}{\ell}\right)$ iterations.
Each iteration takes the following time:
 \begin{itemize}
 \item Lines~\ref{l71}--\ref{l72} take time $\mathcal{O}\left(\ell n\right)$.
 \item Line~\ref{l8} takes time $\mathcal{O}\left(3^\ell\ell n\right)$, since there are
$3^\ell$ vectors in the set ${\it columns}$ and for each of them we need $\mathcal{O}\left(\ell n\right)$ time to test the scalar product condition.
   \item The for loop extending over lines~\ref{l9}--\ref{l14}
has $\mathcal{O}\left(\ell\right)$ iterations. Each iteration takes the following time:
\begin{itemize}
  \item Lines~\ref{l15}--\ref{lnew} take in total $\mathcal{O}\left({n+2\ell-1\choose 2\ell}{m+2\ell-1\choose 2\ell}(n+m)\right)$,
since there are ${n+2\ell-1\choose 2\ell}$ possible vectors $\bfd$,
there are ${m+2\ell-1\choose 2\ell}$ possible vectors $\bft$, and each
bivector $(\bfd;\bft)$ is of length $n+m$.
  \item \sloppypar{Lines~\ref{l10}--\ref{l11} take time $\mathcal{O}\left({3^\ell+m-1\choose m} \ell m\right)$, since there are
${3^\ell+m-1\choose m}$ multisubsets in $\mathcal{B}_H$, each of which can be stored in $\mathcal{O}\left(\ell m\right)$ space, and the row sum and the connectedness check takes time $\mathcal{O}\left(\ell m\right)$ for each of them.}
  \item The for loop extending over lines~\ref{l12}--\ref{l14} has $\mathcal{O}\left(|\mathcal{B}_H|\right)$ iterations, each of which takes time
  $\mathcal{O}\left(\ell m\right)$. Thus, the overall time complexity of these lines is $\mathcal{O}\left({3^\ell+m-1\choose m} \ell m\right)$.
\end{itemize}
\end{itemize}
\end{itemize}
Using a hashmap, the computation in line~\ref{l16} can be carried out in time proportional to the total size of the list
{\it allPossibleBivectors}, which is
$$\mathcal{O}\left(\sum_{n = 1}^{\lfloor{\ell/2\rfloor}+1}\sum_{m = n}^{\ell+n-2}{n+2\ell-1\choose 2\ell}{m+2\ell-1\choose 2\ell}(n+m)\right)\,.$$
Line~\ref{l17} takes time $\mathcal{O}(1)$.
Altogether, the time complexity of the algorithm is
$$\mathcal{O}\left(\sum_{n = 1}^{\lfloor{\ell/2\rfloor}+1}
\left({{n+1\choose 2}+\ell-1\choose \ell}
\sum_{m = n}^{\ell+n-2}
\left({n+2\ell-1\choose 2\ell}{m+2\ell-1\choose 2\ell}(n+m)+{3^\ell+m-1\choose m}\ell m\right)\right)
\right)\,.$$

Empirical measurements done while searching for the non-realizable pairs on our equipment (AMD Ryzen Threadripper 1950X 16-Core Processor) are as follows: 2,3,4 edges: neglectable, 5 edges: 5 seconds, 6 edges: 291 seconds, 7 edges: 5924 seconds. The search is stopped at this point as the next search is projected to take a few weeks.

\begin{thebibliography}{20}
\providecommand{\natexlab}[1]{#1}
\providecommand{\url}[1]{\texttt{#1}}
\expandafter\ifx\csname urlstyle\endcsname\relax
  \providecommand{\doi}[1]{doi: #1}\else
  \providecommand{\doi}{doi: \begingroup \urlstyle{rm}\Url}\fi

\bibitem[Bruhn and Diestel(2009)]{MR2482947}
H.~Bruhn and R.~Diestel.
\newblock Mac{L}ane's theorem for arbitrary surfaces.
\newblock \emph{J. Combin. Theory Ser. B}, 99\penalty0 (2):\penalty0 275--286,
  2009.

\bibitem[Chmutov(2009)]{MR2507944}
S.~Chmutov.
\newblock Generalized duality for graphs on surfaces and the signed
  {B}ollob\'{a}s-{R}iordan polynomial.
\newblock \emph{J. Combin. Theory Ser. B}, 99\penalty0 (3):\penalty0 617--638,
  2009.

\bibitem[Edmonds(1965)]{Edmonds}
J.~Edmonds.
\newblock On the surface duality of linear graphs.
\newblock \emph{J. Res. Nat. Bur. Standards Sect. B}, 69B:\penalty0 121--123,
  1965.

\bibitem[Euler(1736)]{euler1736solutio}
L.~Euler.
\newblock Solutio problematis ad geometriam situs pertinentis.
\newblock \emph{Comment. Academiae Sci. I. Petropolitanae}, 8:\penalty0
  128--140, 1736.

\bibitem[Fijav\v{z} et~al.(2014)Fijav\v{z}, Pisanski, and Rus]{MR3184644}
G.~Fijav\v{z}, T.~Pisanski, and J.~Rus.
\newblock Strong traces model of self-assembly polypeptide structures.
\newblock \emph{MATCH Commun. Math. Comput. Chem.}, 71\penalty0 (1):\penalty0
  199--212, 2014.

\bibitem[Gross and Tucker(1987)]{MR898434}
J.~L. Gross and T.~W. Tucker.
\newblock \emph{Topological graph theory}.
\newblock Wiley-Interscience Series in Discrete Mathematics and Optimization.
  John Wiley \& Sons, Inc., New York, 1987.

\bibitem[Grothendieck(1997)]{MR1483107}
A.~Grothendieck.
\newblock Esquisse d'un programme.
\newblock In \emph{Geometric {G}alois actions, 1}, volume 242 of \emph{London
  Math. Soc. Lecture Note Ser.}, pages 5--48. Cambridge Univ. Press, Cambridge,
  1997.
\newblock With an English translation on pp. 243--283.

\bibitem[Gurvich()]{Gurvich-RRR}
V.~Gurvich.
\newblock Dual graphs on surfaces.
\newblock DIMACS Technical Report 1996-35 and RUTCOR Research Report 26-1996,
  Rutgers University.

\bibitem[Gurvich and Shabat(1989)]{GurvichShabat}
V.~A. Gurvich and G.~B. Shabat.
\newblock Charts of surfaces and their schemes.
\newblock \emph{Dokl. Akad. Nauk SSSR}, 305\penalty0 (6):\penalty0 1298--1303,
  1989.

\bibitem[Hakimi(1962)]{MR0148049}
S.~L. Hakimi.
\newblock On realizability of a set of integers as degrees of the vertices of a
  linear graph. {I}.
\newblock \emph{J. Soc. Indust. Appl. Math.}, 10:\penalty0 496--506, 1962.

\bibitem[Harary(1969)]{Harary}
F.~Harary.
\newblock \emph{Graph theory}.
\newblock Addison-Wesley Publishing Co., Reading, Mass.-Menlo Park,
  Calif.-London, 1969.

\bibitem[Huggett and Moffatt(2013)]{MR3071855}
S.~Huggett and I.~Moffatt.
\newblock Bipartite partial duals and circuits in medial graphs.
\newblock \emph{Combinatorica}, 33\penalty0 (2):\penalty0 231--252, 2013.

\bibitem[Lando and Zvonkin(2004)]{LandoZvonkin}
S.~K. Lando and A.~K. Zvonkin.
\newblock \emph{Graphs on surfaces and their applications}, volume 141 of
  \emph{Encyclopaedia of Mathematical Sciences}.
\newblock Springer-Verlag, Berlin, 2004.

\bibitem[Moffatt(2011)]{MR2817160}
I.~Moffatt.
\newblock A characterization of partially dual graphs.
\newblock \emph{J. Graph Theory}, 67\penalty0 (3):\penalty0 198--217, 2011.

\bibitem[Mohar and Thomassen(2001)]{MR1844449}
B.~Mohar and C.~Thomassen.
\newblock \emph{Graphs on surfaces}.
\newblock Johns Hopkins Studies in the Mathematical Sciences. Johns Hopkins
  University Press, Baltimore, MD, 2001.

\bibitem[Ringel(1974)]{Ringel}
G.~Ringel.
\newblock \emph{Map color theorem}.
\newblock Springer-Verlag, New York-Heidelberg, 1974.

\bibitem[Shabat(2018)]{MR3792528}
G.~Shabat.
\newblock Counting {B}elyi pairs over finite fields.
\newblock In \emph{2016 {MATRIX} annals}, volume~1 of \emph{MATRIX Book Ser.},
  pages 305--322. Springer, Cham, 2018.

\bibitem[Shabat and Voevodsky(1990)]{MR1106916}
G.~B. Shabat and V.~A. Voevodsky.
\newblock Drawing curves over number fields.
\newblock In \emph{The {G}rothendieck {F}estschrift, {V}ol. {III}}, volume~88
  of \emph{Progr. Math.}, pages 199--227. Birkh\"{a}user Boston, Boston, MA,
  1990.

\bibitem[\v{S}koviera(1992)]{MR1179495}
M.~\v{S}koviera.
\newblock Spanning subgraphs of embedded graphs.
\newblock \emph{Czechoslovak Math. J.}, 42(117)\penalty0 (2):\penalty0
  235--239, 1992.

\bibitem[Zaslavsky(1989)]{MR979055}
T.~Zaslavsky.
\newblock Matroids determine the embeddability of graphs in surfaces.
\newblock \emph{Proc. Amer. Math. Soc.}, 106\penalty0 (4):\penalty0 1131--1135,
  1989.

\end{thebibliography}
\end{document}